\newtheorem{theorem}{Theorem}
\newtheorem{definition}{Definition}
\newtheorem{proposition}{Proposition}
\newtheorem{lemma}{Lemma}
\journal{Journal of \LaTeX\ Templates}
\begin{document}
	
\begin{frontmatter}

\title{On some formulas for the $k$-analogue of Appell functions and generating relations via $k$-fractional derivative \tnoteref{mytitlenote}}

\author[mymainaddress]{\"{O}vg\"{u} G\"{u}rel Y\i lmaz}

\author[mysecondaryaddress]{Rabia Akta\c{s}}

\author[mysecondaryaddress]{Fatma Ta\c{s}delen}

\address[mymainaddress]{Department of Mathematics, Recep Tayyip Erdogan
	University, Rize, Turkey}
\address[mysecondaryaddress]{Department of Mathematics, Faculty of
	Science, Ankara University, 06100, Tandogan, Ankara, Turkey.}

\begin{abstract}
Our present investigation is mainly based on the $k$-hypergeometric
functions which are constructed by making use of the Pochhammer $k$-symbol 
\cite{Diaz} which are one of the vital generalization of hypergeometric
functions. We introduce $k$-analogues of $F_{2}\ $and $F_{3}$ Appell functions
denoted by the symbols $F_{2,k}\ $and $F_{3,k}\ $respectively, just like
Mubeen et al. did for $F_{1}$ in 2015 \cite{Mubeen6}. Meanwhile, we prove
some main properties namely integral representations, transformation
formulas and some reduction formulas which help us to have relations between
not only $k$-Appell functions but also $k$-hypergeometric functions.
Finally, employing the theory of Riemann Liouville $k$-fractional derivative 
\cite{Rahman} and using the relations which we consider in this paper, we
acquire linear and bilinear generating relations for $k$-analogue of
hypergeometric functions and Appell functions.
\end{abstract}

\begin{keyword}
\texttt{k-Gamma function, k-Beta function, Pochhammer symbol, Hypergeometric function, Appell functions, Integral representation, Reduction and transformation formula, Fractional derivative, Generating function.}
\MSC[2010] 	26A33\sep 33B15\sep 33C05\sep 33C65\sep 
\end{keyword}

\end{frontmatter}

%\linenumbers

\section{Introduction}

Special functions, with its diverse sub-branches, are very wide field of
study and are used not only in various fields of mathematics but also in the
solutions of important problems in many disciplines of science such as
physics, chemistry and biology. This subject is powerful to make sense of
uncertain questions especially in physical problems so it encourages many
people for notable improvements on this matter. As in other sciences,
remarkable problems are still discussed in many disciplines and more general
results are tried to be obtained.

Generalized hypergeometric functions which are one of these studies in
special functions \cite{Slater, Srivastava} are defined by 
\begin{equation}
_{p}F_{q}\left[ 
\begin{array}{c}
\alpha _{1},\ \alpha _{2},...,\alpha _{p} \\ 
\beta _{1},\ \beta _{2},...,\beta _{q}%
\end{array}%
;x\right] =\sum_{n=0}^{\infty }\frac{\left( \alpha _{1}\right)
	_{n}\left( \alpha _{2}\right) _{n}...\left( \alpha _{p}\right) _{n}}{\left(
	\beta _{1}\right) _{n}\left( \beta _{2}\right) _{n}...\left( \beta
	_{q}\right) _{n}}\frac{x^{n}}{n!}  \label{ghf}
\end{equation}%
where $\alpha _{1},\ \alpha _{2},....,\alpha _{p},\ \beta _{1},\ \beta
_{2},...,\beta _{q},\ x\in 
%TCIMACRO{\U{2102} }%
%BeginExpansion
\mathbb{C}
%EndExpansion
\ $and $\beta _{1},\ \beta _{2},...,\beta _{q}$ neither zero nor a negative
integers.

Here $\left( \lambda \right) _{n}$ is the Pochhammer symbol defined by
\begin{equation}
\left( \lambda \right) _{n}=\left\{ 
\begin{array}{c}
\lambda \left( \lambda +1\right) ...\left( \lambda +n-1\right) \ ;\ \ \
n\geq 1\ \ \ \ \ \ \ \ \ \ \ \ \ \ \ \ \ \  \\ 
\ \ \ \ \ \ \ \ \ \ 1\ \ \ \ \ \ \ \ \ \ \ ;\ \ n=0,\ \lambda \neq 0%
\end{array}%
\right. .  \label{poc}
\end{equation}%

In special case $p=2$ and $q=1$ in $\eqref{ghf}$ , we can obtain $_{2}F_{1} $ Gauss hypergeometric function \cite{Slater, Srivastava},
\begin{equation}
_{2}F_{1}\left[ 
\begin{array}{c}
\alpha \ ,\ \beta \\ 
\gamma%
\end{array}%
;x\right] =\sum_{n=0}^{\infty }\frac{\left( \alpha \right)
	_{n}\left( \beta \right) _{n}}{\left( \gamma \right) _{n}}\frac{x^{n}}{n!},\
\ \ \left\vert x\right\vert <1  \label{hf}
\end{equation}%
where $\alpha ,\ \beta ,\ \gamma ,\ x\in 
%TCIMACRO{\U{2102} }%
%BeginExpansion
\mathbb{C}
%EndExpansion
\ $and $\gamma \ $neither zero nor a negative integer.

Many elementary functions can be expressed in terms of hypergeometric
functions. Moreover; non-elementary functions that occur in physics and
mathematics have a representation with hypergeometric series. Therefore,
generalizing hypergeometric functions bring with different generalizations
in other disciplines. These generalizations can be made by increasing the number of parameters in
the hypergeometric function or by increasing the number of variables.
Appell, based on the idea that the number of variables can be increased, has
defined Appell hypergeometric functions obtained by multiplying two
hypergeometric functions. These are the four elemanter functions defined in \cite{Slater, Srivastava} 
\begin{eqnarray}
F_{1}\left( \alpha ,\beta ,\beta ^{\prime };\gamma ;x,y\right) 
&=&\sum_{m,n=0}^{\infty }\frac{\left( \alpha \right) _{m+n}\left(
	\beta \right) _{m}\left( \beta ^{\prime }\right) _{n}}{\left( \gamma \right)
	_{m+n}}\frac{x^{m}}{m!}\frac{y^{n}}{n!},  \label{app1} \\
F_{2}\left( \alpha ,\beta ,\beta ^{\prime };\gamma ,\gamma ^{\prime
};x,y\right)  &=&\sum_{m,n=0}^{\infty }\frac{\left( \alpha \right)
	_{m+n}\left( \beta \right) _{m}\left( \beta ^{\prime }\right) _{n}}{\left(
	\gamma \right) _{m}\left( \gamma ^{\prime }\right) _{n}}\frac{x^{m}}{m!}%
\frac{y^{n}}{n!}  \label{app2} \\
F_{3}\left( \alpha ,\alpha ^{\prime },\beta ,\beta ^{\prime };\gamma
;x,y\right)  &=&\sum_{m,n=0}^{\infty }\frac{\left( \alpha \right)
	_{m}\left( \alpha ^{\prime }\right) _{n}\left( \beta \right) _{m}\left(
	\beta ^{\prime }\right) _{n}}{\left( \gamma \right) _{m+n}}\frac{x^{m}}{m!}%
\frac{y^{n}}{n!}  \label{app3} \\
F_{4}\left( \alpha ,\beta ;\gamma ,\gamma ^{\prime };x,y\right) 
&=&\sum_{m,n=0}^{\infty }\frac{\left( \alpha \right) _{m+n}\left(
	\beta \right) _{m+n}}{\left( \gamma \right) _{m}\left( \gamma ^{\prime
	}\right) _{n}}\frac{x^{m}}{m!}\frac{y^{n}}{n!}  \label{app4}
\end{eqnarray}%
where $\left\vert x\right\vert <1,\ \left\vert y\right\vert <1,\ \left\vert
x\right\vert +\left\vert y\right\vert <1,\ \left\vert x\right\vert
<1,\left\vert y\right\vert <1,\ \sqrt{\left\vert x\right\vert }+\sqrt{%
	\left\vert y\right\vert }<1,\ $respectively.

Another generalization of hypergeometric functions is the hypergeometric$\ k$
-function, defined by the Pochhammer $k$-symbol studied by Diaz et al. \cite{Diaz}. This paper includes the $k$-analogue of the Pochhammer symbol and
hypergeometric function, as well as the $k$-generalization of gamma, beta,
and zeta functions with their integral representations and some identities
provided by classical ones. It should be noted that, taking $k=1$ in these
generalizations, the $k$-extensions of the functions reduce to the classical
ones.

Let $k\in 
%TCIMACRO{\U{211d} }%
%BeginExpansion
\mathbb{R}
%EndExpansion
^{+}$ and $n\in 
%TCIMACRO{\U{2115} }%
%BeginExpansion
\mathbb{N}
%EndExpansion
^{+}.\ $Hypergeometric $k$-function is defined in \cite{Diaz} as 
\begin{equation}
_{2}F_{1,k}\left[ 
\begin{array}{c}
\alpha \ ,\ \beta  \\ 
\gamma 
\end{array}%
;x\right] :=\ _{2}F_{1,k}\left[ 
\begin{array}{c}
\left( \alpha ,k\right) \ ,\ \left( \beta ,k\right)  \\ 
\left( \gamma ,k\right) 
\end{array}%
;x\right] \ =\sum\limits_{n=0}^{\infty }\frac{\left( \alpha \right)
	_{n,k}\left( \beta \right) _{n,k}}{\left( \gamma \right) _{n,k}}\frac{x^{n}}{%
	n!}
\end{equation}
where $\alpha ,\ \beta ,\ \gamma ,\ x\in 
%TCIMACRO{\U{2102} }%
%BeginExpansion
\mathbb{C}
%EndExpansion
\ $and $\gamma \ $neither zero nor a negative integer and $\left( \lambda\right) _{n,k}\ $is the Pochhammer $k$-symbol defined in \cite{Diaz} as
\begin{equation}
	\left( \lambda \right) _{n,k}=\left\{ 
	\begin{array}{c}
		\lambda \left( \lambda +k\right) \left( \lambda +2k\right) ...\left( \lambda
		+\left( n-1\right) k\right) \ ;\ \ \ n\geq 1\ \ \ \ \ \ \ \ \ \ \ \ \ \ \ \
		\ \  \\ 
		\ \ \ \ \ \ \ \ \ \ \ \ 1\ \ \ \ \ \ \ \ \ \ \ ;\ \ n=0,\ \lambda \neq 0%
	\end{array}%
	\right. .
\end{equation}

Based on this generalization, Kokologiannaki \cite{Kokolo} obtained different inequalities
and properties for the generalizations of Gamma, Beta and Zeta functions. Some limits with the help of asymptotic properties of $k$-gamma and $k$-beta functions were discussed by Krasniqi \cite{Krasniqi}.
Mubeen et al. \cite{Mubeen} established integral representations of the $k$%
-confluent hypergeometric function and k-hypergeometric function and in
another paper \cite{Mubeen1}, proved the k-analogue of the Kummer's first
formulation using these integral representations. In \cite{Korkmaz}, some families of multilinear and multilateral generating functions for the $k$%
-analogue of the hypergeometric functions were obtained. Studies on this
subject are not limited to these papers, for detailed \cite{Li,Mubeen2,Nisar,Sivamani}.

In \cite{Mubeen5}, Mubeen adapted the $k$-generalization to the Riemann
Liouville fractional integral by using $k$-gamma function. In \cite{Romero}, 
$k$-Riemann Liouville fractional derivative were studied and new properties
were obtained with the help of Fourier and Laplace transforms. In \cite%
{Rahman}, Rahman et al. applied the newly $k$-fractional derivative
operator to $k$-analogue of hypergeometric and Appell functions and obtained
new relations satisfied between them. Furthermore, $k$-fractional derivative
operator was applied to the $k$-Mittag leffler function and the Wright
function.

Our present investigation is motivated by the fact that generalizations of
hypergeometric functions have considerable importance due to their
applications in many disciplines from different perspectives. Therefore, our
study is generally based on the $k$-extension of hypergeometric functions.
The structure of the paper is organized as follows: In section 2, we briefly
give some definitions and preliminary results which are essential in the
following sections as noted in \cite{Diaz,Mubeen5,Mubeen6}. In section 3,
following \cite{Diaz,Mubeen6} and using the same notion, we are concerned
with the $k$-generalizations of $F_{2}$ and $F_{3}$ Appell hypergeometric
functions. Moreover, we prove some main properties such as integral
representations, transformation formulas and some reduction formulas which
enables us to have relations for $k$-hypergeometric functions$\ $and $k$-Appell
functions. In the last part of the paper, applying the theory of Riemann
Liouville $k$-fractional derivative \cite{Rahman} and using the relations
which we consider previous sections, we gain linear and bilinear generating
relations for $k$-analogue of hypergeometric functions and $k$-Appell
functions.

\section{Some\ Definitions\ and\ Preliminary Results\ }

For the sake of completeness, it will be better to examine the preliminary
section in three subsections by reason of the number of theorems and
definitions. In these subsections, we will present some definitions,
properties and results which we need in our investigation in further
sections. We begin by introducing $k$-gamma,\ $k$-beta and $k$-analogue of
hypergeometric function and we continue definition of $k$-generalization of $%
F_{1}$ which is the first Appell function. We conclude this section with
recalling Riemann Liouville fractional derivative,\ $k$-generalization of
this fractional derivative and some important theorems which will be
required in our studies.

Through this paper, we denote by $%
%TCIMACRO{\U{2102} }%
%BeginExpansion
\mathbb{C}
%EndExpansion
,\ 
%TCIMACRO{\U{211d} }%
%BeginExpansion
\mathbb{R}
%EndExpansion
,\ 
%TCIMACRO{\U{211d} }%
%BeginExpansion
\mathbb{R}
%EndExpansion
^{+}$,\ $%
%TCIMACRO{\U{2115} }%
%BeginExpansion
\mathbb{N}
%EndExpansion
\ $and $%
%TCIMACRO{\U{2115} }%
%BeginExpansion
\mathbb{N}
%EndExpansion
^{+}$ the sets of complex numbers, real numbers, real and positive numbers
and positive integers with zero and positive integers, respectively.

\subsection{$k$-Generalizations of Gamma, Beta and Hypergeometric Functions}

In this subsection, we will present the definitions of $k$-gamma and $k$-beta
functions are presented and some elemental relations provided by these functions are
introduced by Diaz et al.\ \cite{Diaz} and Mubeen et al.\ \cite{Mubeen2}.
Furthermore, we continue the definition of $k$-hypergeometric function and
we present integral representation and some formulas satisfied from this
generalization \cite{Mubeen, Mubeen1}. 

\begin{definition}
	
	For $x\in 
	%TCIMACRO{\U{2102} }%
	%BeginExpansion
	\mathbb{C}
	%EndExpansion
	$ and $k\in 
	%TCIMACRO{\U{211d} }%
	%BeginExpansion
	\mathbb{R}
	%EndExpansion
	^{+},\ $the integral representation of $k$-gamma function $\Gamma _{k}\ $%
	is defined by%
	\begin{equation}
	\Gamma _{k}\left( x\right) =\int\limits_{0}^{\infty }t^{x-1}e^{-\frac{t^{k}%
		}{k}}dt  \label{kg}
	\end{equation}%
	where $\Re\left( x\right) >0$ \cite{Diaz, Mubeen2}.
\end{definition}

\begin{definition}
	For $x,\ y\in 
	%TCIMACRO{\U{2102} }%
	%BeginExpansion
	\mathbb{C}
	%EndExpansion
	$ and $k\in 
	%TCIMACRO{\U{211d} }%
	%BeginExpansion
	\mathbb{R}
	%EndExpansion
	^{+},\ $the $k$-beta function $B_{k}\ $ is defined by%
	\begin{equation}
	B_{k}\left( x,y\right) =\frac{1}{k}\int\limits_{0}^{1}t^{\frac{x}{k}%
		-1}\left( 1-t\right) ^{\frac{y}{k}-1}dt  \label{kb}
	\end{equation}%
	where $\Re\left( x\right) >0$ and $\Re\left( y\right) >0$ \cite{Diaz}.
\end{definition}

\begin{proposition}
	Let$\ k\in 
	%TCIMACRO{\U{211d} }%
	%BeginExpansion
	\mathbb{R}
	%EndExpansion
	^{+},\ a\in 
	%TCIMACRO{\U{211d} }%
	%BeginExpansion
	\mathbb{R}
	%EndExpansion
	,\ n\in 
	%TCIMACRO{\U{2115} }%
	%BeginExpansion
	\mathbb{N}
	%EndExpansion
	^{+}.\ $The $k$-gamma function $\Gamma _{k}$ and the $k$-beta function $B_{k}$ satisfy the following properties \cite{Diaz, Mubeen2}, 
	\begin{eqnarray}
	\Gamma _{k}\left( x+k\right) &=&x\Gamma _{k}\left( x\right) ,  \label{kg1} \\
    \Gamma _{k}\left( x\right) &=&k^{\frac{x}{k}-1}\Gamma \left( \frac{x}{k}
	\right) ,  \label{kg2} \\ 
	B_{k}\left( x,y\right) &=&\frac{\Gamma _{k}\left( x\right) \Gamma _{k}\left(
		y\right) }{\Gamma _{k}\left( x+y\right) },  \label{kb3} \\ 
		B_{k}\left( x,y\right) &=&\frac{1}{k}B\left( \frac{x}{k},\frac{y}{k}\right) .
	\label{kb4}
	\end{eqnarray}
\end{proposition}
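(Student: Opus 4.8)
The plan is to establish the four identities in the order \eqref{kg2}, \eqref{kb4}, \eqref{kg1}, \eqref{kb3}, in each case reducing the $k$-object to its classical counterpart via the integral representations \eqref{kg} and \eqref{kb} and the known functional equations of $\Gamma$ and $B$.

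First I would prove \eqref{kg2}. Starting from \eqref{kg} and substituting $u=t^{k}/k$ (so $t=(ku)^{1/k}$ and $t^{x-1}\,dt=t^{x-k}\,du=k^{x/k-1}u^{x/k-1}\,du$), the integral becomes $k^{x/k-1}\int_{0}^{\infty}u^{x/k-1}e^{-u}\,du=k^{x/k-1}\Gamma(x/k)$, valid for $\Re(x)>0$. The identity \eqref{kb4} is even more immediate: in \eqref{kb} one recognizes the integral as the classical Euler Beta integral $B(x/k,y/k)=\int_{0}^{1}t^{x/k-1}(1-t)^{y/k-1}\,dt$, so $B_{k}(x,y)=\tfrac{1}{k}B(x/k,y/k)$ at once.

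Next, for \eqref{kg1} I would integrate by parts in \eqref{kg}. Writing $\Gamma_{k}(x+k)=\int_{0}^{\infty}t^{x}\cdot t^{k-1}e^{-t^{k}/k}\,dt$ and noting $\frac{d}{dt}\bigl(-e^{-t^{k}/k}\bigr)=t^{k-1}e^{-t^{k}/k}$, integration by parts with the factor $t^{x}$ gives $\Gamma_{k}(x+k)=\bigl[-t^{x}e^{-t^{k}/k}\bigr]_{0}^{\infty}+x\int_{0}^{\infty}t^{x-1}e^{-t^{k}/k}\,dt$; the boundary term vanishes at both ends when $\Re(x)>0$ (the one point needing a touch of care), leaving $\Gamma_{k}(x+k)=x\Gamma_{k}(x)$. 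Equivalently, \eqref{kg1} follows from \eqref{kg2} and $\Gamma(z+1)=z\Gamma(z)$, since $\Gamma_{k}(x+k)=k^{(x+k)/k-1}\Gamma(x/k+1)=k^{x/k}\cdot\frac{x}{k}\Gamma(x/k)=x\,k^{x/k-1}\Gamma(x/k)=x\Gamma_{k}(x)$.

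Finally, \eqref{kb3} follows by assembling the pieces: using \eqref{kb4}, then the classical relation $B(a,b)=\Gamma(a)\Gamma(b)/\Gamma(a+b)$, and then \eqref{kg2} (in the rearranged form $\Gamma(z/k)=k^{1-z/k}\Gamma_{k}(z)$) for $z=x$, $z=y$, $z=x+y$, the accumulated powers of $k$ telescope to $k^{-1+(1-x/k)+(1-y/k)-(1-(x+y)/k)}=k^{0}=1$, yielding $B_{k}(x,y)=\Gamma_{k}(x)\Gamma_{k}(y)/\Gamma_{k}(x+y)$. A self-contained alternative is the classical double-integral argument: form $\Gamma_{k}(x)\Gamma_{k}(y)$ as an integral over the first quadrant with weight $e^{-(s^{k}+t^{k})/k}$, change variables to coordinates adapted to that weight, and identify the resulting factors with $\Gamma_{k}(x+y)$ and $kB_{k}(x,y)$. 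The only genuine subtlety in the whole proof is the careful bookkeeping of the fractional exponents of $k$ produced by the substitutions, together with the justification of the vanishing boundary term, both routine under $\Re(x),\Re(y)>0$.
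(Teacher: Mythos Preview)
Your argument is correct and is precisely the standard derivation of these identities. Note, however, that the paper does not actually prove this proposition: it is stated in the preliminaries with a citation to \cite{Diaz, Mubeen2} and left unproved, so there is no ``paper's own proof'' to compare against. What you have written is essentially the proof one finds in those references, reducing $\Gamma_{k}$ and $B_{k}$ to the classical $\Gamma$ and $B$ by the natural substitutions and then invoking $\Gamma(z+1)=z\Gamma(z)$ and $B(a,b)=\Gamma(a)\Gamma(b)/\Gamma(a+b)$.
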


\begin{definition}
	Let $x$ $\in 
	%TCIMACRO{\U{2102} }%
	%BeginExpansion
	\mathbb{C}
	%EndExpansion
	,\ k\in 
	%TCIMACRO{\U{211d} }%
	%BeginExpansion
	\mathbb{R}
	%EndExpansion
	^{+}\ $and $n\in 
	%TCIMACRO{\U{2115} }%
	%BeginExpansion
	\mathbb{N}
	%EndExpansion
	^{+}.$ Then the Pochhammer $k$-symbol is defined in \cite{Diaz, Mubeen2} by
	\begin{equation}
	\left( x\right) _{n,k}=x\left( x+k\right) \left( x+2k\right) ...\left(
	x+\left( n-1\right) k\right)  \label{kpoc}
	\end{equation}
	
	In particular we denote $\left( x\right) _{0,k}:=1$.
\end{definition}

\begin{proposition}
	If\ $\alpha \in 
	%TCIMACRO{\U{2102} }%
	%BeginExpansion
	\mathbb{C}
	%EndExpansion
	$ and $m,n\in 
	%TCIMACRO{\U{2115} }%
	%BeginExpansion
	\mathbb{N}
	%EndExpansion
	^{+}\ $then for $k\in 
	%TCIMACRO{\U{211d} }%
	%BeginExpansion
	\mathbb{R}
	%EndExpansion
	^{+},$ we have\ 
	\begin{eqnarray}
	\left( \alpha \right) _{n,k} &=&\frac{\Gamma _{k}\left( \alpha +nk\right) }{%
		\Gamma _{k}\left( \alpha \right) },  \label{kpoc1} \\
	\left( \alpha \right) _{n,k} &=&k^{n}\left( \frac{\alpha }{k}\right) _{n},
	\label{kpoc2} \\
	\left( \alpha \right) _{m+n,k} &=&\left( \alpha \right) _{m,k}\left( \alpha
	+mk\right) _{n,k},  \label{kpoc3} 
	\end{eqnarray}%
	where $\left( \alpha \right) _{n}$ and $\left( \alpha \right) _{n,k}$ denote
	the Pochhammer symbol and Pochhammer $k$-symbol\ respectively \cite{Diaz, Mubeen2}.
\end{proposition}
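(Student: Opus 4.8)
The plan is to obtain all three identities directly from the definition \eqref{kpoc} of the Pochhammer $k$-symbol, together with the functional equation \eqref{kg1} and the relation \eqref{kg2} for $\Gamma_{k}$ already recorded above. Each of the three is either a short induction or a telescoping/product-splitting computation, so I would organize the proof around the $\Gamma_{k}$-recurrence \eqref{kg1} and then read off \eqref{kpoc2} and \eqref{kpoc3} from it (or, in the case of \eqref{kpoc2} and \eqref{kpoc3}, directly from the product form \eqref{kpoc}).

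For \eqref{kpoc1} I would argue by induction on $n$. The case $n=0$ is the convention $(\alpha)_{0,k}=1$ together with $\Gamma_k(\alpha)/\Gamma_k(\alpha)=1$, and the case $n=1$ is precisely \eqref{kg1} rewritten as $\Gamma_{k}(\alpha+k)/\Gamma_{k}(\alpha)=\alpha=(\alpha)_{1,k}$. For the inductive step, applying \eqref{kg1} with $x=\alpha+nk$ gives $\Gamma_{k}(\alpha+(n+1)k)=(\alpha+nk)\,\Gamma_{k}(\alpha+nk)$; dividing by $\Gamma_{k}(\alpha)$ and invoking the inductive hypothesis produces $(\alpha+nk)(\alpha)_{n,k}$, which equals $(\alpha)_{n+1,k}$ by \eqref{kpoc}.

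For \eqref{kpoc2} I would simply factor $k$ out of each of the $n$ consecutive factors in \eqref{kpoc}:
\[
(\alpha)_{n,k}=\prod_{j=0}^{n-1}(\alpha+jk)=k^{n}\prod_{j=0}^{n-1}\left(\frac{\alpha}{k}+j\right)=k^{n}\left(\frac{\alpha}{k}\right)_{n},
\]
the last equality being the definition \eqref{poc} of the classical Pochhammer symbol. (Alternatively, the same identity follows from \eqref{kpoc1} by expressing $\Gamma_{k}(\alpha+nk)$ and $\Gamma_{k}(\alpha)$ through \eqref{kg2}: the powers of $k$ collapse to $k^{n}$ and the remaining $\Gamma$-quotient is $(\alpha/k)_{n}$.) For \eqref{kpoc3} I would split the product \eqref{kpoc} for $(\alpha)_{m+n,k}$ at the $m$-th factor: the first $m$ factors $\alpha(\alpha+k)\cdots(\alpha+(m-1)k)$ form $(\alpha)_{m,k}$, while the remaining $n$ factors are $(\alpha+mk)(\alpha+mk+k)\cdots(\alpha+mk+(n-1)k)=(\alpha+mk)_{n,k}$; equivalently, one multiplies and divides by $\Gamma_{k}(\alpha+mk)$ inside \eqref{kpoc1}. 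I do not anticipate any genuine obstacle here — the only points needing a line of care are making the index bookkeeping in the product split explicit and noting that the degenerate cases $m=0$ or $n=0$ are covered by the convention $(\alpha)_{0,k}=1$.
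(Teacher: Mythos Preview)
Your argument is correct in every detail: the induction on $n$ via the recurrence \eqref{kg1} for \eqref{kpoc1}, the factor-of-$k$ extraction for \eqref{kpoc2}, and the product split for \eqref{kpoc3} are exactly the natural elementary verifications. Note, however, that the paper does not supply its own proof of this proposition; it is quoted from \cite{Diaz, Mubeen2} as a preliminary fact, so there is no in-paper argument to compare your approach against.
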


\begin{proposition}
	For any $\alpha \in 
	%TCIMACRO{\U{2102} }%
	%BeginExpansion
	\mathbb{C}
	%EndExpansion
	$ and $k\in 
	%TCIMACRO{\U{211d} }%
	%BeginExpansion
	\mathbb{R}
	%EndExpansion
	^{+}$, the following identity holds%
	\begin{equation}
	\sum\limits_{n=0}^{\infty }\left( \alpha \right) _{n,k}\frac{x^{n}}{n!}%
	=\left( 1-kx\right) ^{-\frac{\alpha }{k}}  \label{kpoc5}
	\end{equation}%
	where $\left\vert x\right\vert <\frac{1}{k}$ \cite{Diaz, Mubeen2}.
\end{proposition}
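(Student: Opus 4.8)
The plan is to reduce the $k$-series to a classical one by means of the scaling identity $\eqref{kpoc2}$, namely $\left(\alpha\right)_{n,k}=k^{n}\left(\frac{\alpha}{k}\right)_{n}$, and then invoke the ordinary binomial series. First I would substitute this identity into the left-hand side, so that
\begin{equation*}
\sum_{n=0}^{\infty}\left(\alpha\right)_{n,k}\frac{x^{n}}{n!}
=\sum_{n=0}^{\infty}k^{n}\left(\frac{\alpha}{k}\right)_{n}\frac{x^{n}}{n!}
=\sum_{n=0}^{\infty}\left(\frac{\alpha}{k}\right)_{n}\frac{\left(kx\right)^{n}}{n!}.
\end{equation*}
This is exactly the classical hypergeometric series ${}_{1}F_{0}\!\left[\frac{\alpha}{k};\,-\,;\,kx\right]$ evaluated at the point $kx$.

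Next I would apply the well-known binomial theorem in hypergeometric form, $\sum_{n=0}^{\infty}\left(\lambda\right)_{n}\frac{z^{n}}{n!}=\left(1-z\right)^{-\lambda}$, valid for $\left|z\right|<1$, with the choices $\lambda=\frac{\alpha}{k}$ and $z=kx$. The convergence hypothesis $\left|x\right|<\frac{1}{k}$ is precisely what guarantees $\left|kx\right|<1$, so the series on the right converges absolutely and the substitution is legitimate. This yields $\left(1-kx\right)^{-\alpha/k}$, which is the desired right-hand side, completing the argument.

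There is essentially no serious obstacle here; the only point requiring a word of care is the justification that the classical binomial series may be used, i.e.\ that the radius of convergence matches the stated domain $\left|x\right|<\frac{1}{k}$ — but this is immediate from $\left|kx\right|<1$. If one preferred a self-contained route avoiding the classical identity, an alternative would be to expand $f(x)=\left(1-kx\right)^{-\alpha/k}$ directly by Taylor's theorem: differentiating $n$ times gives $f^{(n)}(0)=k^{n}\frac{\alpha}{k}\left(\frac{\alpha}{k}+1\right)\cdots\left(\frac{\alpha}{k}+n-1\right)=\left(\alpha\right)_{n,k}$ after using $\eqref{kpoc2}$ again, which reproduces the coefficients term by term. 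Either way the computation is routine, and the case $k=1$ recovers the familiar generating function for the Pochhammer symbol.
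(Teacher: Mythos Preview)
Your argument is correct: reducing to the classical binomial series via the scaling identity $(\alpha)_{n,k}=k^{n}(\alpha/k)_{n}$ and then applying $\sum_{n\ge 0}(\lambda)_{n}z^{n}/n!=(1-z)^{-\lambda}$ with $z=kx$ is exactly the right manoeuvre, and the convergence check $|kx|<1$ is handled properly.

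There is nothing to compare against, however, because the paper does not supply a proof of this proposition at all. It appears in the preliminaries section as a known identity, attributed to the references of Diaz et al.\ and Mubeen et al., and is simply quoted for later use (for instance in the proofs of the generating relations in Section~4). So your write-up in fact goes beyond what the paper offers here; either of your two routes (reduction to the classical ${}_{1}F_{0}$ or direct Taylor expansion of $(1-kx)^{-\alpha/k}$) would serve as a complete, self-contained justification.
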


\begin{theorem}
	Assume that $x\in 
	%TCIMACRO{\U{2102} }%
	%BeginExpansion
	\mathbb{C}
	%EndExpansion
	,\ k\in 
	%TCIMACRO{\U{211d} }%
	%BeginExpansion
	\mathbb{R}
	%EndExpansion
	^{+}$ and $\Re\left( \gamma \right) >\Re\left( \beta \right)
	>0,\ $then the integral representation of the $k$-hypergeometric function is
	defined in \cite{Mubeen} as
	\begin{equation}
	_{2}F_{1,k}\left[ 
	\begin{array}{c}
	\alpha \ ,\ \beta \\ 
	\gamma%
	\end{array}%
	;x\right] =\frac{\Gamma _{k}\left( \gamma \right) }{k\Gamma _{k}\left( \beta
		\right) \Gamma _{k}\left( \gamma -\beta \right) }\int\limits_{0}^{1}t^{%
		\frac{\beta }{k}-1}\left( 1-t\right) ^{\frac{\gamma -\beta }{k}-1}\left(
	1-kxt\right) ^{-\frac{\alpha }{k}}dt.  \label{ikhf}
	\end{equation}
\end{theorem}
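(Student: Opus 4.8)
The plan is to start from the integral on the right-hand side of \eqref{ikhf} and show that it reproduces the series defining $_{2}F_{1,k}$. First I would expand the factor $\left( 1-kxt\right) ^{-\alpha /k}$ by means of the generalized binomial identity \eqref{kpoc5}: replacing $x$ by $xt$ there gives
\begin{equation*}
\left( 1-kxt\right) ^{-\frac{\alpha }{k}}=\sum_{n=0}^{\infty }\left( \alpha \right) _{n,k}\frac{\left( xt\right) ^{n}}{n!},
\end{equation*}
valid whenever $\left\vert xt\right\vert <1/k$; since $t\in \lbrack 0,1]$ it suffices to assume $\left\vert x\right\vert <1/k$ for now, the general case being recovered by analytic continuation at the end. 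Inserting this series into the integral in \eqref{ikhf} and interchanging summation and integration --- legitimate because the series converges uniformly in $t\in \lbrack 0,1]$ for such $x$ --- leaves
\begin{equation*}
\frac{\Gamma _{k}\left( \gamma \right) }{k\Gamma _{k}\left( \beta \right) \Gamma _{k}\left( \gamma -\beta \right) }\sum_{n=0}^{\infty }\frac{\left( \alpha \right) _{n,k}x^{n}}{n!}\int_{0}^{1}t^{\frac{\beta +nk}{k}-1}\left( 1-t\right) ^{\frac{\gamma -\beta }{k}-1}dt.
\end{equation*}

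Next I would identify the remaining $t$-integral. By the definition \eqref{kb} of the $k$-beta function, $\int_{0}^{1}t^{\frac{\beta +nk}{k}-1}\left( 1-t\right) ^{\frac{\gamma -\beta }{k}-1}dt=k\,B_{k}\!\left( \beta +nk,\gamma -\beta \right)$; this integral is finite precisely because $\Re \left( \gamma \right) >\Re \left( \beta \right) >0$ forces $\Re \left( \beta +nk\right) >0$ and $\Re \left( \gamma -\beta \right) >0$, which is exactly where the hypothesis of the theorem enters. Applying \eqref{kb3} then turns this into $k\,\Gamma _{k}\left( \beta +nk\right) \Gamma _{k}\left( \gamma -\beta \right) /\Gamma _{k}\left( \gamma +nk\right)$, so that after cancelling the factors $k$ and $\Gamma _{k}\left( \gamma -\beta \right)$ the $n$-th summand becomes
\begin{equation*}
\frac{\Gamma _{k}\left( \gamma \right) }{\Gamma _{k}\left( \beta \right) }\cdot \frac{\Gamma _{k}\left( \beta +nk\right) }{\Gamma _{k}\left( \gamma +nk\right) }\cdot \frac{\left( \alpha \right) _{n,k}x^{n}}{n!}.
\end{equation*}

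Finally, using \eqref{kpoc1} in the forms $\left( \beta \right) _{n,k}=\Gamma _{k}\left( \beta +nk\right) /\Gamma _{k}\left( \beta \right)$ and $\left( \gamma \right) _{n,k}=\Gamma _{k}\left( \gamma +nk\right) /\Gamma _{k}\left( \gamma \right)$, the $n$-th summand collapses to $\dfrac{\left( \alpha \right) _{n,k}\left( \beta \right) _{n,k}}{\left( \gamma \right) _{n,k}}\dfrac{x^{n}}{n!}$, and summation over $n$ produces exactly the series for $_{2}F_{1,k}\!\left[ \alpha ,\beta ;\gamma ;x\right]$. To conclude I would note that the left-hand series has radius of convergence $1/k$ while the right-hand integral is analytic in $x$ on a neighbourhood of that disc, so the identity, once established for $\left\vert x\right\vert <1/k$, extends by analytic continuation. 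The only genuinely delicate point is the justification of the term-by-term integration; the remainder is routine bookkeeping with the $k$-Euler integral \eqref{kb}, the $k$-beta--$k$-gamma relation \eqref{kb3}, and the $k$-Pochhammer-to-$k$-gamma conversion \eqref{kpoc1}.
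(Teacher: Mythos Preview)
Your argument is correct and is precisely the standard derivation: expand $(1-kxt)^{-\alpha/k}$ via \eqref{kpoc5}, integrate term by term against the $k$-beta density \eqref{kb}, and convert back to $k$-Pochhammer symbols using \eqref{kb3} and \eqref{kpoc1}. Note, however, that the paper does not supply its own proof of this theorem; it is quoted from \cite{Mubeen} as a preliminary result, so there is no in-paper proof to compare against --- your write-up is exactly the argument one finds in that reference.
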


For the following theorem, $_{2}F_{1,k}\left[ 
\begin{array}{c}
\left( \alpha ,1\right) \ ,\ \left( \beta ,k\right) \\ 
\left( \gamma ,k\right)%
\end{array}%
;x\right] \ $is the expression of the following form \cite{Mubeen1},%
\begin{equation}
_{2}F_{1,k}^{\ast }\left[ 
\begin{array}{c}
\alpha \ ,\ \beta  \\ 
\gamma 
\end{array}%
;x\right] :=\ _{2}F_{1,k}\left[ 
\begin{array}{c}
\left( \alpha ,1\right) \ ,\ \left( \beta ,k\right)  \\ 
\left( \gamma ,k\right) 
\end{array}%
;x\right] =\sum\limits_{n=0}^{\infty }\dfrac{\left( \alpha \right)
	_{n}\left( \beta \right) _{n,k}}{\left( \gamma \right) _{n,k}}\dfrac{x^{n}}{%
	n!}.
\end{equation}

\begin{theorem} 
	\cite{Mubeen1} Assume that $x\in 
	%TCIMACRO{\U{2102} }%
	%BeginExpansion
	\mathbb{C}
	%EndExpansion
	,\ k\in 
	%TCIMACRO{\U{211d} }%
	%BeginExpansion
	\mathbb{R}
	%EndExpansion
	^{+}$ and $Re\left( \gamma -\beta \right) >0,$ then
	\begin{equation}
	_{2}F_{1,k}\left[ 
	\begin{array}{c}
	\left( \alpha ,1\right) \ ,\ \left( \beta ,k\right) \\ 
	\left( \gamma ,k\right)%
	\end{array}%
	;x\right] :\ =\frac{\Gamma _{k}\left( \gamma \right) \Gamma _{k}\left(
		\gamma -\beta -k\alpha \right) }{\Gamma _{k}\left( \gamma -\beta \right)
		\Gamma _{k}\left( \gamma -k\alpha \right) }.  \label{kummer1}
	\end{equation}
\end{theorem}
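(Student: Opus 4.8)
The claim is the $k$-analogue of the classical Gauss summation theorem (Kummer's first formula), i.e.\ the evaluation of the mixed function $_{2}F_{1,k}^{\ast}$ at the unit argument; the plan is to mimic the Euler-integral route that proves the classical result. First I would build the Euler-type integral representation of $_{2}F_{1,k}^{\ast}$. Starting from the defining series $\sum_{n\ge 0}\frac{(\alpha)_{n}(\beta)_{n,k}}{(\gamma)_{n,k}}\frac{x^{n}}{n!}$ of $_{2}F_{1,k}^{\ast}$, rewrite the quotient of Pochhammer $k$-symbols as a $k$-beta integral: by $\eqref{kpoc1}$, $\frac{(\beta)_{n,k}}{(\gamma)_{n,k}}=\frac{\Gamma_{k}(\gamma)}{\Gamma_{k}(\beta)}\cdot\frac{\Gamma_{k}(\beta+nk)}{\Gamma_{k}(\gamma+nk)}$, and $\eqref{kb3}$ together with the definition $\eqref{kb}$ give $\frac{\Gamma_{k}(\beta+nk)}{\Gamma_{k}(\gamma+nk)}=\frac{B_{k}(\beta+nk,\gamma-\beta)}{\Gamma_{k}(\gamma-\beta)}=\frac{1}{k\,\Gamma_{k}(\gamma-\beta)}\int_{0}^{1}t^{\frac{\beta}{k}+n-1}(1-t)^{\frac{\gamma-\beta}{k}-1}\,dt$ when $\Re(\beta)>0$ and $\Re(\gamma-\beta)>0$. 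Substituting, interchanging sum and integral, and summing the inner series by the ordinary binomial theorem $\sum_{n\ge 0}(\alpha)_{n}\frac{(xt)^{n}}{n!}=(1-xt)^{-\alpha}$ --- it is the classical binomial series, not $\eqref{kpoc5}$, that enters here, precisely because the first parameter carries the label $1$ rather than $k$ --- produces the representation
\[
_{2}F_{1,k}^{\ast}\!\left[\begin{array}{c}\alpha,\ \beta\\ \gamma\end{array};x\right]=\frac{\Gamma_{k}(\gamma)}{k\,\Gamma_{k}(\beta)\,\Gamma_{k}(\gamma-\beta)}\int_{0}^{1}t^{\frac{\beta}{k}-1}(1-t)^{\frac{\gamma-\beta}{k}-1}(1-xt)^{-\alpha}\,dt ,
\]
which is the exact analogue of $\eqref{ikhf}$ for this function.

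Next I would set $x=1$. The integrand collapses to $t^{\frac{\beta}{k}-1}(1-t)^{\frac{\gamma-\beta}{k}-1-\alpha}=t^{\frac{\beta}{k}-1}(1-t)^{\frac{\gamma-\beta-k\alpha}{k}-1}$, so by $\eqref{kb}$ the integral equals $k\,B_{k}(\beta,\gamma-\beta-k\alpha)$, provided $\Re(\gamma-\beta-k\alpha)>0$. Using $\eqref{kb3}$ once more, $B_{k}(\beta,\gamma-\beta-k\alpha)=\frac{\Gamma_{k}(\beta)\,\Gamma_{k}(\gamma-\beta-k\alpha)}{\Gamma_{k}(\gamma-k\alpha)}$; cancelling the factor $k$ and $\Gamma_{k}(\beta)$ then leaves $\frac{\Gamma_{k}(\gamma)\,\Gamma_{k}(\gamma-\beta-k\alpha)}{\Gamma_{k}(\gamma-\beta)\,\Gamma_{k}(\gamma-k\alpha)}$, which is the asserted value.

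As an independent check one can avoid integrals altogether: by $\eqref{kpoc2}$, $(\beta)_{n,k}=k^{n}(\beta/k)_{n}$ and $(\gamma)_{n,k}=k^{n}(\gamma/k)_{n}$, so the series is simply the ordinary Gauss function $_{2}F_{1}(\alpha,\tfrac{\beta}{k};\tfrac{\gamma}{k};x)$; the classical Gauss theorem at $x=1$ gives $\frac{\Gamma(\gamma/k)\,\Gamma(\gamma/k-\alpha-\beta/k)}{\Gamma(\gamma/k-\alpha)\,\Gamma(\gamma/k-\beta/k)}$, and converting every $\Gamma(\cdot/k)$ into $\Gamma_{k}$ via $\eqref{kg2}$ --- all the powers of $k$ cancelling after a short count --- recovers the right-hand side. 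I would present the integral-representation argument as the main proof, since it is closest to the methods used elsewhere in the paper and makes the parameter restrictions transparent.

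The one step requiring care is the exchange of summation and integration. For $|x|<1$ it is justified by uniform convergence of $\sum_{n}(\alpha)_{n}(xt)^{n}/n!$ on $t\in[0,1]$, the weight $t^{\frac{\beta}{k}-1}$ being integrable near $0$ since $\Re(\beta)>0$; the passage to $x=1$ is then secured by Abel's limit theorem, the series at $x=1$ converging exactly when $\Re(\gamma-\beta-k\alpha)>0$ (equivalently, by analytic continuation in $x$, both sides being analytic there). Regarding hypotheses, $\Re(\beta)>0$ is needed for the $k$-beta integrals and $\Re(\gamma-\beta-k\alpha)>0$ for the final evaluation, while $\Re(\gamma-\beta)>0$ --- the condition stated --- is what the intermediate representation requires.
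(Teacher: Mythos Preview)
The paper does not actually prove this theorem; it is stated in the preliminaries with a citation to \cite{Mubeen1} and no proof is given here, so there is nothing in the present paper to compare against. Your argument via the Euler-type integral representation is correct and is exactly the route taken in the cited reference: one writes $\frac{(\beta)_{n,k}}{(\gamma)_{n,k}}$ as a $k$-beta integral, sums the binomial series $\sum_{n}(\alpha)_{n}(xt)^{n}/n!=(1-xt)^{-\alpha}$, and then at $x=1$ recognises the resulting integral as $k\,B_{k}(\beta,\gamma-\beta-k\alpha)$.

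Your second, purely algebraic, route---reducing the mixed series to the ordinary Gauss function ${}_{2}F_{1}(\alpha,\beta/k;\gamma/k;x)$ via $(\lambda)_{n,k}=k^{n}(\lambda/k)_{n}$ and then invoking the classical Gauss summation at $x=1$---is a genuinely different and shorter alternative: it bypasses the integral representation entirely and makes clear that the $k$-version is a straightforward reparametrisation of the classical identity. The integral approach, on the other hand, is self-contained within the $k$-framework and makes the convergence conditions $\Re(\beta)>0$, $\Re(\gamma-\beta)>0$, $\Re(\gamma-\beta-k\alpha)>0$ appear naturally.

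One minor remark: the ``$x$'' in the displayed statement is evidently meant to be $1$ (the right-hand side is independent of $x$, and the special case \eqref{kummer2} that follows it in the paper is used only at argument $1$); you read this correctly.
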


For the special case $\alpha =-n,$%
\begin{equation}
_{2}F_{1,k}\left[ 
\begin{array}{c}
\left( -n,1\right) \ ,\ \left( \beta ,k\right) \\ 
\left( \gamma ,k\right)%
\end{array}%
;x\right] \ =\frac{\left( \gamma -\beta \right) _{n,k}}{\left( \gamma
	\right) _{n,k}}.  \label{kummer2}
\end{equation}%

\subsection{ $k$-Generalization of the Appell Function $F_{1}\left( \protect\alpha ,\protect\beta ,\protect\beta ^{\prime };\protect\gamma ;x,y\right) $}

Here, we remind the definition of $k$-analogue of $F_{1}$ which is the first Appell function and some identities which are satisfied by it \cite{Mubeen6}.

\begin{definition}
	\cite{Mubeen6} Let$\ k\in 
	%TCIMACRO{\U{211d} }%
	%BeginExpansion
	\mathbb{R}
	%EndExpansion
	^{+},\ x,y\in 
	%TCIMACRO{\U{2102} }%
	%BeginExpansion
	\mathbb{C}
	%EndExpansion
	,\ \alpha ,\ \beta ,\ \beta ^{\prime },\ \gamma \in 
	%TCIMACRO{\U{2102} }%
	%BeginExpansion
	\mathbb{C}
	%EndExpansion
	$ and $n\in 
	%TCIMACRO{\U{2115} }%
	%BeginExpansion
	\mathbb{N}
	%EndExpansion
	^{+}.\ $Then the $F_{1,k}$ function with the parameters $\alpha ,\ \beta ,\
	\beta ^{\prime },\ \gamma$ is given by 
	\begin{equation}
	F_{1,k}\left( \alpha ,\beta ,\beta ^{\prime };\gamma ;x,y\right)
	=\sum\limits_{m,n=0}^{\infty }\frac{\left( \alpha \right) _{m+n,k}\left(
		\beta \right) _{m,k}\left( \beta ^{\prime }\right) _{n,k}}{\left( \gamma
		\right) _{m+n,k}}\frac{x^{m}}{m!}\frac{y^{n}}{n!}  \label{kapp1}
	\end{equation}%
	where $\gamma \neq 0,-1,-2,...$ and $\left\vert x\right\vert <\frac{1}{k},\
	\left\vert y\right\vert <\frac{1}{k}.$\bigskip
\end{definition}

\begin{theorem}
	\cite{Mubeen6} Assume that $k\in 
	%TCIMACRO{\U{211d} }%
	%BeginExpansion
	\mathbb{R}
	%EndExpansion
	^{+},\ x,y\in 
	%TCIMACRO{\U{2102} }%
	%BeginExpansion
	\mathbb{C}
	%EndExpansion
	,\ \Re\left( \gamma \right) >\Re\left( \alpha \right) >0,\ $then
	the integral representation of the $k$-hypergeometric function is as follows
	\begin{eqnarray}
	&&F_{1,k}\left( \alpha ,\beta ,\beta ^{\prime };\gamma ;x,y\right)   \notag
	\\
	&=&\tfrac{\Gamma _{k}\left( \gamma \right) }{k\Gamma _{k}\left( \alpha
		\right) \Gamma _{k}\left( \gamma -\alpha \right) }\int\limits_{0}^{1}t^{%
		\frac{\alpha }{k}-1}\left( 1-t\right) ^{\frac{\gamma -\alpha }{k}-1}\left(
	1-kxt\right) ^{-\frac{\beta }{k}}\left( 1-kyt\right) ^{-\frac{\beta ^{\prime
		}}{k}}dt.  \label{ikapp}
	\end{eqnarray}
\end{theorem}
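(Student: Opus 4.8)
The plan is to mimic the classical derivation of the Euler-type integral representation for the Appell function $F_1$, but carried out with the Pochhammer $k$-symbol and the $k$-beta function in place of their classical counterparts. First I would start from the double series definition \eqref{kapp1} and isolate the factor $(\alpha)_{m+n,k}/(\gamma)_{m+n,k}$. Using \eqref{kpoc1} this ratio equals $\Gamma_k(\alpha+(m+n)k)\Gamma_k(\gamma-\alpha)/\bigl(\Gamma_k(\gamma+(m+n)k)\Gamma_k(\alpha)\bigr)$ times $\Gamma_k(\gamma)/\Gamma_k(\gamma-\alpha)$, and by \eqref{kb3} this is $B_k(\alpha+(m+n)k,\gamma-\alpha)\,\Gamma_k(\gamma)/\bigl(\Gamma_k(\alpha)\Gamma_k(\gamma-\alpha)\bigr)$. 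Then I would invoke the integral form of the $k$-beta function \eqref{kb}, which gives
\begin{equation}
\frac{(\alpha)_{m+n,k}}{(\gamma)_{m+n,k}}=\frac{\Gamma_k(\gamma)}{k\,\Gamma_k(\alpha)\Gamma_k(\gamma-\alpha)}\int_0^1 t^{\frac{\alpha}{k}+m+n-1}(1-t)^{\frac{\gamma-\alpha}{k}-1}\,dt,
\end{equation}
valid because $\Re(\alpha)>0$ and $\Re(\gamma-\alpha)>0$ under the theorem's hypothesis.

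Next I would substitute this integral back into the double sum \eqref{kapp1} and interchange the order of summation and integration — this is the step that needs the convergence hypothesis $|x|<1/k$, $|y|<1/k$ (and $\Re(\gamma)>\Re(\alpha)>0$) to be justified by absolute/uniform convergence, so that Fubini-Tonelli applies; this interchange is the main technical obstacle, though a routine one. After interchanging, the summand factors as
\begin{equation}
\sum_{m=0}^{\infty}\frac{(\beta)_{m,k}\,(xt)^m}{m!}\sum_{n=0}^{\infty}\frac{(\beta')_{n,k}\,(yt)^n}{n!},
\end{equation}
and each of these one-dimensional sums is evaluated in closed form by the $k$-binomial identity \eqref{kpoc5}, giving $(1-kxt)^{-\beta/k}$ and $(1-kyt)^{-\beta'/k}$ respectively (here one uses $|xt|\le|x|<1/k$ and likewise for $y$).

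Collecting the pieces yields exactly
\begin{equation}
F_{1,k}(\alpha,\beta,\beta';\gamma;x,y)=\frac{\Gamma_k(\gamma)}{k\,\Gamma_k(\alpha)\Gamma_k(\gamma-\alpha)}\int_0^1 t^{\frac{\alpha}{k}-1}(1-t)^{\frac{\gamma-\alpha}{k}-1}(1-kxt)^{-\frac{\beta}{k}}(1-kyt)^{-\frac{\beta'}{k}}\,dt,
\end{equation}
which is \eqref{ikapp}. I would close by remarking that setting $k=1$ recovers the classical Euler integral for $F_1$, and that the same scheme specializes to the integral representation \eqref{ikhf} of $_2F_{1,k}$ when one of $\beta,\beta'$ is zero — a useful consistency check. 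The only genuinely delicate point, worth one sentence of justification in the write-up, is the legitimacy of swapping $\sum_{m,n}$ with $\int_0^1$; everything else is bookkeeping with \eqref{kpoc1}, \eqref{kb}, \eqref{kb3}, and \eqref{kpoc5}.
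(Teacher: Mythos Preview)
Your argument is correct. Note, however, that the paper does not supply its own proof of this statement: the theorem is quoted as a preliminary result from \cite{Mubeen6} and is stated without proof. That said, your method---rewriting $(\alpha)_{m+n,k}/(\gamma)_{m+n,k}$ via \eqref{kpoc1} and \eqref{kb3} as a $k$-beta integral \eqref{kb}, interchanging sum and integral, and then collapsing the two inner series with \eqref{kpoc5}---is exactly the scheme the paper employs when it \emph{does} prove the analogous integral representation for $F_{2,k}$ (see the proof of \eqref{appk5}). So your proposal is both correct and in full harmony with the paper's own techniques for the parallel results.
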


\subsection{The Riemann Liouville $k$-Fractional Derivative Operator}

Fractional calculus and its applications have been intensively investigated
for a long time by many researches in numerous disciplines and its attention
has grown tremendously. By making use of the concept of the fractional
derivatives and integrals, various extensions of them has been introduced
and authors have gained different perspectives in many areas such as
engineering, physics, economics, biology, statistics \cite{ Fernandez, Ozarslan}. One of the
generalization of fractional derivatives is Riemann Liouville $k$-fractional
derivative operator studied in \cite{ Rahman, Romero, Azam}.
Here, we remind the definition of Riemann Liouville fractional derivative
and its $k$-generalization and also some theorems which will be used in
further section, are shown.

\begin{definition}
	\cite{Srivastava} The well known Riemann Liouville fractional derivative of order $\mu $ is
	described,\ for a function $f,\ $as follows
	\begin{equation}
	\mathcal{D}_{z}^{\mu }\left\{ f\left( z\right) \right\} =\frac{1}{\Gamma
		\left( -\mu \right) }\int\limits_{0}^{z}f\left( t\right) \left( z-t\right)
	^{-\mu -1}dt  \label{rl1}
	\end{equation}%
	where $\Re\left( \mu \right) <0.$
	
	In particular, for the case $m-1<\Re\left( \mu \right) <m$ where $m=1,2,...$  \eqref{rl1} is written by
	\begin{eqnarray}
	\mathcal{D}_{z}^{\mu }\left\{ f\left( z\right) \right\} &=&\frac{d^{m}}{%
		dz^{m}}\mathcal{D}_{z}^{\mu -m}\left\{ f\left( z\right) \right\}  \label{rl2}
	\\
	&=&\frac{d^{m}}{dz^{m}}\left\{ \frac{1}{\Gamma \left( -\mu +m\right) }%
	\int\limits_{0}^{x}f\left( t\right) \left( z-t\right) ^{-\mu +m-1}dt\right\}
	\notag
	\end{eqnarray}
\end{definition}

\begin{definition}
	\cite{Rahman} The $k$-analogue of Riemann Liouville fractional derivative of order $\mu $
	is defined by%
	\begin{equation}
	_{k}\mathcal{D}_{z}^{\mu }\left\{ f\left( z\right) \right\} =\frac{1}{%
		k\Gamma _{k}\left( -\mu \right) }\int\limits_{0}^{z}f\left( t\right) \left(
	z-t\right) ^{-\frac{\mu }{k}-1}dt  \label{krl1}
	\end{equation}%
	where $\Re\left( \mu \right) <0\ $and$\ k\in 
	%TCIMACRO{\U{211d} }%
	%BeginExpansion
	\mathbb{R}
	%EndExpansion
	^{+}.$
	
	In particular, for the case $m-1<\Re\left( \mu \right) <m$ where $%
	m=1,2,...,$ \eqref{krl1} is written by%
	\begin{eqnarray}
	_{k}\mathcal{D}_{z}^{\mu }\left\{ f\left( z\right) \right\} &=&\frac{d^{m}}{%
		dz^{m}}\ _{k}\mathcal{D}_{z}^{\mu -mk}\left\{ f\left( z\right) \right\}
	\label{krl2} \\
	&=&\frac{d^{m}}{dz^{m}}\left\{ \frac{1}{k\Gamma _{k}\left( -\mu +mk\right) }%
	\int\limits_{0}^{z}f\left( t\right) \left( z-t\right) ^{-\frac{\mu }{k}%
		+m-1}dt\right\}  \notag
	\end{eqnarray}
\end{definition}

\begin{theorem}
	\cite{Rahman} Let $k\in 
	%TCIMACRO{\U{211d} }%
	%BeginExpansion
	\mathbb{R}
	%EndExpansion
	^{+},\ \Re\left( \mu \right) <0.$ Then we have
	\begin{equation}
	_{k}\mathcal{D}_{z}^{\mu }\left\{ z^{\frac{\eta }{k}}\right\} =\frac{z^{%
			\frac{\eta -\mu }{k}}}{\Gamma _{k}\left( -\mu \right) }B_{k}\left( \eta
	+k,-\mu \right) .  \label{krl3}
	\end{equation}
\end{theorem}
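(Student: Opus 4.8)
The plan is to compute the action of the operator $_{k}\mathcal{D}_{z}^{\mu }$ on the power function $z^{\eta /k}$ directly from the definition \eqref{krl1}, reducing the resulting integral to a $k$-beta function via the substitution $t = z\tau$. First I would assume $\Re(\eta) > -k$ so that the integral \eqref{krl1} converges at the lower limit (this hypothesis, implicit in the appearance of $B_k(\eta+k,-\mu)$, should be noted). Starting from
\begin{equation}
_{k}\mathcal{D}_{z}^{\mu }\left\{ z^{\frac{\eta }{k}}\right\} =\frac{1}{k\Gamma _{k}\left( -\mu \right) }\int\limits_{0}^{z}t^{\frac{\eta}{k}}\left( z-t\right) ^{-\frac{\mu }{k}-1}dt ,
\end{equation}
I would put $t = z\tau$, $dt = z\,d\tau$, and factor $z$ out of $(z-t)^{-\mu/k-1} = z^{-\mu/k-1}(1-\tau)^{-\mu/k-1}$. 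Collecting the powers of $z$ gives $z^{\eta/k}\cdot z^{-\mu/k-1}\cdot z = z^{(\eta-\mu)/k}$, which already matches the claimed prefactor.

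The remaining integral is $\int_{0}^{1}\tau^{\eta/k}(1-\tau)^{-\mu/k-1}\,d\tau$. To identify this with a $k$-beta function I would use the defining integral \eqref{kb}, namely $B_k(x,y) = \tfrac{1}{k}\int_0^1 \tau^{x/k-1}(1-\tau)^{y/k-1}\,d\tau$; matching exponents requires $x/k - 1 = \eta/k$ and $y/k - 1 = -\mu/k - 1$, i.e. $x = \eta + k$ and $y = -\mu$. Hence $\int_{0}^{1}\tau^{\eta/k}(1-\tau)^{-\mu/k-1}\,d\tau = k\,B_k(\eta+k,-\mu)$. Substituting back, the factor $k$ cancels the $1/k$ sitting in front of the original integral, and together with the $1/\Gamma_k(-\mu)$ we are left with exactly
\begin{equation}
_{k}\mathcal{D}_{z}^{\mu }\left\{ z^{\frac{\eta }{k}}\right\} =\frac{z^{\frac{\eta -\mu }{k}}}{\Gamma _{k}\left( -\mu \right) }B_{k}\left( \eta +k,-\mu \right),
\end{equation}
which is \eqref{krl3}.

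There is no serious obstacle here; the computation is a one-line substitution once the exponents are matched carefully. The only point requiring a little care is bookkeeping the powers of $z$ and the stray factors of $k$ — in particular remembering that both the fractional-derivative definition \eqref{krl1} and the $k$-beta definition \eqref{kb} carry their own $1/k$, so one factor of $k$ must survive the substitution to cancel. For completeness one should also remark on the range of $\mu$: the statement is phrased for $\Re(\mu) < 0$, where \eqref{krl1} applies verbatim; the case $m-1 < \Re(\mu) < m$ would follow by combining the above with the differentiation formula \eqref{krl2}, differentiating $z^{(\eta-\mu+mk)/k}$ $m$ times and using $\Gamma_k(-\mu+mk) = (-\mu+(m-1)k)\cdots(-\mu)\,\Gamma_k(-\mu)/?$ — but since the theorem as stated only claims the case $\Re(\mu)<0$, the substitution argument alone suffices.
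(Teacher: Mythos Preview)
Your argument is correct: the substitution $t=z\tau$ reduces the defining integral \eqref{krl1} to the $k$-beta integral \eqref{kb}, and your bookkeeping of the powers of $z$ and the two factors of $1/k$ is accurate. There is nothing to compare against, however, because the paper does not prove this theorem at all; it is quoted from \cite{Rahman} as a preliminary result and stated without proof. Your computation is exactly the standard one that the cited reference would contain, so the proposal is fine as a self-contained justification.
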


\begin{theorem}
	\cite{Rahman} Let $Re\left( \mu \right) >0$ and suppose that the function $f\left(
	z\right) $ is analytic at the origin with its Maclaurin expansion has the
	power series expansion 
	\begin{equation}
	f\left( z\right) =\sum\limits_{n=0}^{\infty }a_{n}z^{n}
	\end{equation}
	where$\ \left\vert z\right\vert <\rho ,\ \rho \in 
	%TCIMACRO{\U{211d} }%
	%BeginExpansion
	\mathbb{R}
	%EndExpansion
	^{+}.$ Then%
	\begin{equation}
	_{k}\mathcal{D}_{z}^{\mu }\left\{ f\left( z\right) \right\}
	=\sum\limits_{n=0}^{\infty }a_{n}\ _{k}\mathcal{D}_{z}^{\mu }\left\{
	z^{n}\right\} .  \label{krl3a}
	\end{equation}
\end{theorem}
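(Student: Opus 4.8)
The statement to be proved is the term-by-term action of the Riemann–Liouville $k$-fractional derivative on an analytic function: if $f(z)=\sum_{n=0}^{\infty}a_nz^n$ converges for $|z|<\rho$, then $_{k}\mathcal{D}_{z}^{\mu}\{f(z)\}=\sum_{n=0}^{\infty}a_n\,_{k}\mathcal{D}_{z}^{\mu}\{z^n\}$. Let me sketch how I would prove this.

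The plan is to split into the two cases appearing in Definition of $_{k}\mathcal{D}_{z}^{\mu}$. First, for $\Re(\mu)<0$, write $_{k}\mathcal{D}_{z}^{\mu}\{f(z)\}=\frac{1}{k\Gamma_k(-\mu)}\int_{0}^{z}f(t)(z-t)^{-\mu/k-1}\,dt$, substitute the power series for $f(t)$, and interchange the sum and the integral. The justification for the interchange is uniform convergence: fix $z$ with $|z|<\rho$, choose $r$ with $|z|<r<\rho$; then $\sum a_n t^n$ converges uniformly for $t$ on the segment $[0,z]$ (since $|t|\le|z|<r$ and $\sum|a_n|r^n<\infty$), and the kernel $(z-t)^{-\mu/k-1}$ is integrable on $[0,z]$ because $\Re(-\mu/k-1)>-1$. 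Hence Fubini/Weierstrass applies and we get $\sum_{n=0}^{\infty}a_n\cdot\frac{1}{k\Gamma_k(-\mu)}\int_0^z t^n(z-t)^{-\mu/k-1}\,dt=\sum_{n=0}^{\infty}a_n\,_{k}\mathcal{D}_{z}^{\mu}\{z^n\}$, which is exactly the claim in this range. (One may optionally note, via \eqref{krl3} with $\eta=nk$, that $_{k}\mathcal{D}_{z}^{\mu}\{z^n\}=\frac{z^{n-\mu/k}}{\Gamma_k(-\mu)}B_k(nk+k,-\mu)$, so the resulting series is manifestly well defined.)

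Second, for the general case $m-1<\Re(\mu)<m$, use the defining relation $_{k}\mathcal{D}_{z}^{\mu}\{f(z)\}=\frac{d^m}{dz^m}\,_{k}\mathcal{D}_{z}^{\mu-mk}\{f(z)\}$. Since $\Re(\mu-mk)<0$, the first case gives $_{k}\mathcal{D}_{z}^{\mu-mk}\{f(z)\}=\sum_{n=0}^{\infty}a_n\,_{k}\mathcal{D}_{z}^{\mu-mk}\{z^n\}=\sum_{n=0}^{\infty}a_n\frac{z^{n-\mu/k+m}}{\Gamma_k(-\mu+mk)}B_k(nk+k,-\mu+mk)$. This is a power series in $z$ (up to the common factor $z^{-\mu/k+m}$) with radius of convergence at least $\rho$, so it may be differentiated $m$ times term by term on $|z|<\rho$. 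Applying $\frac{d^m}{dz^m}$ inside the sum and recognizing $\frac{d^m}{dz^m}\,_{k}\mathcal{D}_{z}^{\mu-mk}\{z^n\}=\,_{k}\mathcal{D}_{z}^{\mu}\{z^n\}$ yields the stated formula.

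The main obstacle is making the two interchanges of limit operations rigorous on the same footing: the sum–integral swap in the first step needs the integrability of the singular kernel together with uniform convergence of the series on the (closed) integration path, and the sum–derivative swap in the second step needs the differentiated series to still converge locally uniformly on $|z|<\rho$. Both reduce to standard facts once one pins down the radius of convergence of the transformed series, so the argument is routine but the convergence bookkeeping is where care is required; I would also remark that the hypotheses as stated ($\Re(\mu)>0$) implicitly put us in the second case with some $m$, and the first case is invoked only as an auxiliary step.
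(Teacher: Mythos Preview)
The paper does not supply its own proof of this theorem: it is stated in the preliminaries section (Section~2.3) as a result quoted from \cite{Rahman}, with no argument given. Consequently there is nothing in the paper to compare against directly.

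Your argument is the standard one and is essentially correct. You reduce to the integral case $\Re(\mu-mk)<0$, justify the sum--integral interchange via uniform convergence of the power series on the integration segment together with integrability of the Abel kernel, and then push the $m$-fold differentiation through the resulting (generalized) power series. Two small points you could tighten: first, when you say the transformed series ``is a power series in $z$ (up to the common factor $z^{-\mu/k+m}$)'', the term-by-term differentiation is being applied to $z^{-\mu/k+m}\sum_n c_n z^n$, so strictly speaking you should either invoke Leibniz after factoring, or argue directly that a locally uniformly convergent series of analytic functions (on a sector, say) may be differentiated termwise; second, the claim that the radius of convergence of $\sum_n a_n B_k(nk+k,-\mu+mk)\,z^n$ is at least $\rho$ deserves one line of justification, e.g.\ via Stirling-type asymptotics for $\Gamma_k$ showing that the extra Beta factor grows only polynomially in $n$. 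Neither gap is serious, and your observation that the stated hypothesis $\Re(\mu)>0$ places you directly in the differentiated case is a nice clarifying remark.
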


\begin{theorem}
	\cite{Rahman} Let $k\in 
	%TCIMACRO{\U{211d} }%
	%BeginExpansion
	\mathbb{R}
	%EndExpansion
	^{+},\ \Re\left( \mu \right) >\Re\left( \eta \right) >0$ $.\ $%
	Then the following result holds true 
	\begin{equation}
	_{k}\mathcal{D}_{z}^{\eta -\mu }\left\{ z^{\frac{\eta }{k}-1}\left(
	1-kz\right) ^{-\frac{\beta }{k}}\right\} =\frac{\Gamma _{k}\left( \eta
		\right) }{\Gamma _{k}\left( \mu \right) }z^{\frac{\mu }{k}-1}\ _{2}F_{1,k}%
	\left[ 
	\begin{array}{c}
	\beta \ ,\ \eta \\ 
	\mu%
	\end{array}%
	;z\right] \   \label{krl4}
	\end{equation}%
	where $\left\vert z\right\vert <\frac{1}{k}.$
\end{theorem}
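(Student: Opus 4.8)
The plan is to expand the factor $\left(1-kz\right)^{-\beta/k}$ as a power series, apply the operator ${}_{k}\mathcal{D}_{z}^{\eta-\mu}$ term by term, evaluate its action on each monomial by means of \eqref{krl3}, and then resum the result to recognise a ${}_{2}F_{1,k}$-series. First I would record that the hypothesis $\Re\left(\mu\right)>\Re\left(\eta\right)$ forces $\Re\left(\eta-\mu\right)<0$, so that ${}_{k}\mathcal{D}_{z}^{\eta-\mu}$ is the integral operator \eqref{krl1} with $\mu$ replaced by $\eta-\mu$. By Proposition \eqref{kpoc5}, for $\left|z\right|<\tfrac1k$ one has
\[
z^{\frac{\eta}{k}-1}\left(1-kz\right)^{-\frac{\beta}{k}}=z^{\frac{\eta}{k}-1}\sum_{n=0}^{\infty}\frac{\left(\beta\right)_{n,k}}{n!}\,z^{n}=\sum_{n=0}^{\infty}\frac{\left(\beta\right)_{n,k}}{n!}\,z^{n+\frac{\eta}{k}-1}.
\]
Since $\sum_{n}\left(\beta\right)_{n,k}t^{n}/n!$ converges uniformly on every interval $[0,r]$ with $r<\tfrac1k$, and the weight $t^{\frac{\eta}{k}-1}\left(z-t\right)^{\frac{\mu-\eta}{k}-1}$ produced by \eqref{krl1} is integrable on $[0,z]$ (here the conditions $\Re\left(\eta\right)>0$ and $\Re\left(\mu-\eta\right)>0$ are used), the operator ${}_{k}\mathcal{D}_{z}^{\eta-\mu}$ may be applied termwise; equivalently this is the $\Re\left(\eta-\mu\right)<0$ counterpart of \eqref{krl3a}. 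This interchange is the one step that needs genuine care — everything afterwards is bookkeeping.

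For a single power I would write $z^{n+\frac{\eta}{k}-1}=z^{\frac{\eta+\left(n-1\right)k}{k}}$ and invoke \eqref{krl3} with order $\eta-\mu$ and upper parameter $\eta+\left(n-1\right)k$ (admissible because $\Re\left(\eta-\mu\right)<0$ and $\Re\left(\eta+nk\right)>0$), obtaining
\[
{}_{k}\mathcal{D}_{z}^{\eta-\mu}\left\{z^{n+\frac{\eta}{k}-1}\right\}=\frac{z^{\frac{\mu}{k}-1+n}}{\Gamma_{k}\left(\mu-\eta\right)}\,B_{k}\!\left(\eta+nk,\,\mu-\eta\right).
\]
Then I would use \eqref{kb3} to rewrite $B_{k}\!\left(\eta+nk,\mu-\eta\right)$ as $\Gamma_{k}\left(\eta+nk\right)\Gamma_{k}\left(\mu-\eta\right)/\Gamma_{k}\left(\mu+nk\right)$, so the $\Gamma_{k}\left(\mu-\eta\right)$ cancels, and apply \eqref{kpoc1} (equivalently, iterate \eqref{kg1}) to replace $\Gamma_{k}\left(\eta+nk\right)$ by $\left(\eta\right)_{n,k}\Gamma_{k}\left(\eta\right)$ and $\Gamma_{k}\left(\mu+nk\right)$ by $\left(\mu\right)_{n,k}\Gamma_{k}\left(\mu\right)$. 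This yields
\[
{}_{k}\mathcal{D}_{z}^{\eta-\mu}\left\{z^{n+\frac{\eta}{k}-1}\right\}=\frac{\Gamma_{k}\left(\eta\right)}{\Gamma_{k}\left(\mu\right)}\,\frac{\left(\eta\right)_{n,k}}{\left(\mu\right)_{n,k}}\,z^{\frac{\mu}{k}-1}\,z^{n}.
\]

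Finally I would insert this back into the series, pull the factor $\tfrac{\Gamma_{k}\left(\eta\right)}{\Gamma_{k}\left(\mu\right)}z^{\frac{\mu}{k}-1}$ out of the summation, and observe that the remaining series is
\[
\sum_{n=0}^{\infty}\frac{\left(\beta\right)_{n,k}\left(\eta\right)_{n,k}}{\left(\mu\right)_{n,k}}\frac{z^{n}}{n!}={}_{2}F_{1,k}\!\left[
\begin{array}{c}
\beta\ ,\ \eta\\
\mu
\end{array}
;z\right],
\]
which is exactly the asserted identity on $\left|z\right|<\tfrac1k$ (the region where both the series above and the resulting ${}_{2}F_{1,k}$ converge). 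The main obstacle is precisely the justification of the termwise interchange in the first paragraph; I would settle it by fixing $r<\tfrac1k$, using uniform convergence of the partial sums of $\sum_{n}\left(\beta\right)_{n,k}t^{n}/n!$ on $[0,r]$ together with integrability of the weight against which they are integrated in \eqref{krl1}, and invoking dominated convergence to pass the limit inside the integral.
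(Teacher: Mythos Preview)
Your argument is correct. Note, however, that the paper does not supply its own proof of this statement: it is quoted as a preliminary result from \cite{Rahman}, so there is no in-paper proof to compare against. Your approach---expanding $(1-kz)^{-\beta/k}$ via \eqref{kpoc5}, applying the integral operator termwise using \eqref{krl3}, simplifying with \eqref{kb3} and \eqref{kpoc1}, and resumming into a ${}_{2}F_{1,k}$---is the standard one and is almost certainly what appears in the cited reference; your justification of the termwise interchange (uniform convergence of the power series on compact subsets of $|z|<1/k$ together with integrability of the Euler-type weight) is the appropriate one.
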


\begin{theorem}
	\cite{Rahman} Let $k\in 
	%TCIMACRO{\U{211d} }
	%BeginExpansion
	\mathbb{R}
	%EndExpansion
	^{+}.\ $We have the following result
	\begin{equation}
	_{k}\mathcal{D}_{z}^{\eta -\mu }\left\{ z^{\frac{\eta }{k}-1}\left(
	1-kaz\right) ^{-\frac{\alpha }{k}}\left( 1-kbz\right) ^{-\frac{\beta }{k}%
	}\right\} =\frac{\Gamma _{k}\left( \eta \right) }{\Gamma _{k}\left( \mu
		\right) }z^{\frac{\mu }{k}-1}F_{1,k}\left( \eta ,\alpha ,\beta ;\mu
	;az,bz\right)  \label{krl5}
	\end{equation}
	where $\Re\left( \mu \right) >\Re\left( \eta \right) >0,\ \Re\left( \alpha \right) >0,\Re\left( \beta \right) >0$ and $\max
	\left\{ \left\vert az\right\vert ,\left\vert bz\right\vert \right\} <\frac{1%
	}{k}.$
\end{theorem}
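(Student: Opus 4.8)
The plan is to expand the two binomial factors as $k$-series via \eqref{kpoc5} and then apply the operator $_{k}\mathcal{D}_{z}^{\eta-\mu}$ monomial by monomial, exactly as one does for the single-factor identity \eqref{krl4}. First I would write, for $\max\{|az|,|bz|\}<1/k$,
\[
\left(1-kaz\right)^{-\frac{\alpha}{k}}\left(1-kbz\right)^{-\frac{\beta}{k}}=\sum_{m,n=0}^{\infty}\frac{\left(\alpha\right)_{m,k}\left(\beta\right)_{n,k}}{m!\,n!}\,a^{m}b^{n}z^{m+n},
\]
the passage to a double series being legitimate by absolute convergence on that polydisc; multiplying by $z^{\eta/k-1}$ gives $z^{\eta/k-1}$ times a genuine power series in $z$, so the term-by-term differentiation theorem \eqref{krl3a} (applied coefficientwise to the analytic factor, as in the proof of \eqref{krl4}) reduces the problem to evaluating $_{k}\mathcal{D}_{z}^{\eta-\mu}\{z^{\frac{\eta}{k}-1+m+n}\}$ for each $m,n$ and reassembling.

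For a single term I would use \eqref{krl3} with its operator order taken to be $\eta-\mu$ — admissible since $\Re(\eta-\mu)<0$ by hypothesis — and exponent $z^{\xi/k}$ with $\xi=\eta+(m+n-1)k$, which gives
\[
{}_{k}\mathcal{D}_{z}^{\eta-\mu}\left\{z^{\frac{\eta}{k}-1+m+n}\right\}=\frac{z^{\frac{\mu}{k}-1+m+n}}{\Gamma_{k}\left(\mu-\eta\right)}\,B_{k}\!\left(\eta+(m+n)k,\,\mu-\eta\right).
\]
Then I would rewrite the $k$-beta factor through \eqref{kb3} as $\Gamma_{k}(\eta+(m+n)k)\,\Gamma_{k}(\mu-\eta)/\Gamma_{k}(\mu+(m+n)k)$, so the $\Gamma_{k}(\mu-\eta)$ cancels, and finally use \eqref{kpoc1} to write $\Gamma_{k}(\eta+(m+n)k)=(\eta)_{m+n,k}\Gamma_{k}(\eta)$ and $\Gamma_{k}(\mu+(m+n)k)=(\mu)_{m+n,k}\Gamma_{k}(\mu)$. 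The term thus collapses to $\dfrac{\Gamma_{k}(\eta)}{\Gamma_{k}(\mu)}\,\dfrac{(\eta)_{m+n,k}}{(\mu)_{m+n,k}}\,z^{\frac{\mu}{k}-1+m+n}$.

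Substituting back, multiplying by $(\alpha)_{m,k}(\beta)_{n,k}a^{m}b^{n}/(m!\,n!)$, summing over $m,n$ and factoring out $\frac{\Gamma_{k}(\eta)}{\Gamma_{k}(\mu)}z^{\frac{\mu}{k}-1}$ leaves precisely $\sum_{m,n\ge0}\frac{(\eta)_{m+n,k}(\alpha)_{m,k}(\beta)_{n,k}}{(\mu)_{m+n,k}}\frac{(az)^{m}}{m!}\frac{(bz)^{n}}{n!}$, which is $F_{1,k}(\eta,\alpha,\beta;\mu;az,bz)$ by Definition \eqref{kapp1}, and this series converges under $\max\{|az|,|bz|\}<1/k$, matching the stated domain. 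The only genuinely delicate step is the rigorous justification of interchanging the fractional integration in \eqref{krl1} with the double summation: the hypotheses $\Re(\mu)>\Re(\eta)>0$ guarantee that each $B_{k}(\eta+(m+n)k,\mu-\eta)$ is finite and that \eqref{krl3} and \eqref{krl3a} apply, while the polydisc condition on $az,bz$ supplies the absolute/uniform convergence needed to swap the sum with the integral. Everything else is bookkeeping with \eqref{kpoc1}, \eqref{kpoc5} and \eqref{kb3}, so I expect no further obstacle.
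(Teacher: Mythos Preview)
Your argument is correct and is the standard series-expansion proof of this identity. Note, however, that the paper does not supply its own proof of this theorem: it is quoted in the preliminaries (Section~2.3) as a result from \cite{Rahman} and stated without demonstration, so there is no in-paper proof to compare against. Your derivation---expand the two binomials by \eqref{kpoc5}, apply \eqref{krl3} termwise, and collapse the resulting $k$-beta via \eqref{kb3} and \eqref{kpoc1} into the $F_{1,k}$ series \eqref{kapp1}---is exactly the natural proof and is presumably what appears in the cited source.
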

\section{$k$-Generalizations of the Appell Functions and Some Transformation Formulas}

In 2015, $k$-generalization of $F_{1}$ Appell function was introduced and
contiguous function relations and integral representation\ of this function
were shown by using the fundamental relations of the Pochhammer $k$-symbol\ 
\cite{Mubeen6}. The $k$-analogue of the $F_{1}\ $was defined but other
Appell $k$-functions such as $F_{2},\ F_{3}$ and $F_{4}$ have not yet been
explored. We now turn our attention the definition of $F_{2}$ and $F_{3}$
and provide the integral representation of them. Also we derive some linear
transformations of Appell functions and give some reduction formulas
involving the $_{2}F_{1,k}$ hypergeometric function.
\begin{definition}
	Let$\ k\in 
	%TCIMACRO{\U{211d} }%
	%BeginExpansion
	\mathbb{R}
	%EndExpansion
	^{+},\ x,y\in 
	%TCIMACRO{\U{2102} }%
	%BeginExpansion
	\mathbb{C}
	%EndExpansion
	,\ \alpha ,\beta ,\beta ^{\prime },\gamma $,$\ \gamma ^{\prime }\in 
	%TCIMACRO{\U{2102} }%
	%BeginExpansion
	\mathbb{C}
	%EndExpansion
	$ and $m,n\in 
	%TCIMACRO{\U{2115} }%
	%BeginExpansion
	\mathbb{N}
	%EndExpansion
	^{+}.\ $Then the Appell $k$-functions defined by\ 
	\begin{eqnarray}
	F_{2,k}\left( \alpha ,\beta ,\beta ^{\prime };\gamma ,\gamma ^{\prime
	};x,y\right) &=&\sum\limits_{m,n=0}^{\infty }\frac{\left( \alpha \right)
		_{m+n,k}\left( \beta \right) _{m,k}\left( \beta ^{\prime }\right) _{n,k}}{%
		\left( \gamma \right) _{m,k}\left( \gamma ^{\prime }\right) _{n,k}}\frac{%
		x^{m}}{m!}\frac{y^{n}}{n!}  \label{appk2} \\
	&=&\sum\limits_{m=0}^{\infty }\frac{\left( \alpha \right) _{m,k}\left(
		\beta \right) _{m,k}}{\left( \gamma \right) _{m,k}}\ _{2}F_{1,k}\left[ 
	\begin{array}{c}
	\alpha +mk\ ,\ \beta ^{\prime } \\ 
	\gamma ^{\prime }%
	\end{array}%
	;y\right] \frac{x^{m}}{m!}  \notag \\
	F_{3,k}\left( \alpha ,\alpha ^{\prime },\beta ,\beta ^{\prime };\gamma
	;x,y\right) &=&\sum\limits_{m,n=0}^{\infty }\frac{\left( \alpha \right)
		_{m,k}\left( \alpha ^{\prime }\right) _{n,k}\left( \beta \right)
		_{m,k}\left( \beta ^{\prime }\right) _{n,k}}{\left( \gamma \right) _{m+n,k}}%
	\frac{x^{m}}{m!}\frac{y^{n}}{n!}  \label{appk3} \\
	&=&\sum\limits_{m=0}^{\infty }\frac{\left( \alpha \right) _{m,k}\left(
		\beta \right) _{m,k}}{\left( \gamma \right) _{m,k}}\ _{2}F_{1,k}\left[ 
	\begin{array}{c}
	\alpha ^{\prime }\ ,\ \beta ^{\prime } \\ 
	\gamma +mk%
	\end{array}%
	;y\right] \frac{x^{m}}{m!} \notag \\
	F_{4,k}\left( \alpha ,\beta ;\gamma ,\gamma ^{\prime };x,y\right) 
	&=&\sum\limits_{m,n=0}^{\infty }\frac{\left( \alpha \right) _{m+n,k}\left(
		\beta \right) _{m+n,k}}{\left( \gamma \right) _{m,k}\left( \gamma ^{\prime
		}\right) _{n,k}}\frac{x^{m}}{m!}\frac{y^{n}}{n!}  \label{appk4} \\
	&=&\sum\limits_{m=0}^{\infty }\dfrac{\left( \alpha \right) _{m,k}\left(
		\beta \right) _{m,k}}{\left( \gamma \right) _{m,k}}\ _{2}F_{1,k}\left[ 
	\begin{array}{c}
	\alpha +mk\ ,\ \beta +mk \\ 
	\gamma ^{\prime }%
	\end{array}%
	;y\right] \frac{x^{m}}{m!}  \notag
	\end{eqnarray}
	where $\left\vert x\right\vert +\left\vert y\right\vert <\frac{1}{k}, \left\vert x\right\vert <\frac{1}{k},\left\vert y\right\vert <\frac{1}{k}$,	$\sqrt{\left\vert x\right\vert }+\sqrt{\left\vert y\right\vert }<\frac{1}{k}
	\ $ respectively\ and denominators are neither zero and nor negative integers. \\
	Also, the first Appell $k$-function $F_{1,k}\ $defined by \eqref{kapp1} is
	expressed in terms of $_{2}F_{1,k}$ as follows    
	\begin{equation}
	F_{1,k}\left( \alpha ,\beta ,\beta ^{\prime };\gamma ;x,y\right)
	=\sum\limits_{m=0}^{\infty }\dfrac{\left( \alpha \right) _{m,k}\left( \beta
		\right) _{m,k}}{\left( \gamma \right) _{m,k}}\ _{2}F_{1,k}\left[ 
	\begin{array}{c}
	\alpha +mk\ ,\ \beta ^{\prime } \\ 
	\gamma +mk%
	\end{array}%
	;y\right] \frac{x^{m}}{m!}  \label{appk1}
	\end{equation}
\end{definition}

As a first theorem, we consider the integral representation $F_{2,k}$ and $%
F_{3,k}.\ $We note that the integral representation of $F_{1,k}$ can be
found \cite{Mubeen6}.

\begin{theorem}
	Let$\ k\in 
	%TCIMACRO{\U{211d} }%
	%BeginExpansion
	\mathbb{R}
	%EndExpansion
	^{+}$.\  Integral representations of $F_{2,k}$ and $F_{3,k}$ have the forms of 
	\begin{eqnarray}
	&&F_{2,k}\left( \alpha ,\beta ,\beta ^{\prime };\gamma ,\gamma ^{\prime
	};x,y\right) =\frac{1}{k^{2}B_{k}\left( \beta ,\gamma -\beta \right)
		B_{k}\left( \beta ^{\prime },\gamma ^{\prime }-\beta ^{\prime }\right) } 
	\notag \\
	&&\times \int\limits_{0}^{1}\int\limits_{0}^{1}\frac{t^{\frac{\beta }{k}%
			-1}s^{\frac{\beta ^{\prime }}{k}-1}\left( 1-t\right) ^{\frac{\gamma -\beta }{%
				k}-1}\left( 1-s\right) ^{\frac{\gamma ^{\prime }-\beta ^{\prime }}{k}-1}}{%
		\left( 1-kxt-kys\right) ^{\frac{\alpha }{k}}}dtds  \label{appk5}
	\end{eqnarray}%
	\begin{eqnarray}
	&&F_{3,k}\left( \alpha ,\alpha ^{\prime },\beta ,\beta ^{\prime };\gamma
	;x,y\right) =\frac{\Gamma _{k}\left( \gamma \right) }{k^{2}\Gamma _{k}\left(
		\beta \right) \Gamma _{k}\left( \beta ^{\prime }\right) \Gamma _{k}\left(
		\gamma -\beta -\beta ^{\prime }\right) }  \notag \\
	&&\times \iint\limits_{D}\frac{t^{\frac{\beta }{k}-1}s^{\frac{\beta
				^{\prime }}{k}-1}\left( 1-kxt\right) ^{-\frac{\alpha }{k}}\left(
		1-kys\right) ^{-\frac{\alpha ^{\prime }}{k}}}{\left( 1-t-s\right) ^{1-\frac{%
				\gamma -\beta -\beta ^{\prime }}{k}}}dtds  \label{appk5ab}
	\end{eqnarray}
	where $\Re\left( \gamma \right) >\Re\left( \beta \right) >0$, $\Re\left( \gamma ^{\prime }\right) >\Re\left( \beta ^{\prime}\right) >0 \ and \ D=\left\{ t\geq 0,\ s\geq 0,\ t+s\leq 1\right\}.$
\end{theorem}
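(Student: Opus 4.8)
The plan is to turn each Appell $k$-function into a (double) Euler-type integral by writing the quotients of Pochhammer $k$-symbols as $k$-beta integrals, interchanging summation and integration, and then resumming with the $k$-binomial identity \eqref{kpoc5}.

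\emph{The case $F_{2,k}$.} First I would record, using $(\lambda)_{n,k}=\Gamma_k(\lambda+nk)/\Gamma_k(\lambda)$ together with \eqref{kb3} and the integral \eqref{kb}, the identity
\[
\frac{(\beta)_{m,k}}{(\gamma)_{m,k}}=\frac{1}{kB_k(\beta,\gamma-\beta)}\int_0^1 t^{\frac{\beta}{k}+m-1}(1-t)^{\frac{\gamma-\beta}{k}-1}\,dt ,
\]
valid for $\Re(\gamma)>\Re(\beta)>0$, and the analogous identity for $(\beta')_{n,k}/(\gamma')_{n,k}$ with a variable $s$. Substituting both into the series \eqref{appk2} gives a double series whose general term, under a double integral over $[0,1]^2$, contains $(\alpha)_{m+n,k}(xt)^m(ys)^n/(m!\,n!)$. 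Since $|x|+|y|<\tfrac1k$ forces $|xt|+|ys|<\tfrac1k$ for all $t,s\in[0,1]$, the double series converges absolutely and uniformly in $(t,s)$, so Fubini's theorem permits the interchange of both summations with the integral. Collecting the powers of $N=m+n$ via $\sum_{m+n=N}\binom{N}{m}u^{m}v^{n}=(u+v)^{N}$ collapses the inner double sum into $\sum_{N\ge0}(\alpha)_{N,k}(xt+ys)^{N}/N!=(1-kxt-kys)^{-\alpha/k}$ by \eqref{kpoc5}. What is left is precisely the right-hand side of \eqref{appk5}.

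\emph{The case $F_{3,k}$.} Here $m+n$ appears in the denominator, so instead of a product of one-dimensional beta integrals I need a $k$-analogue of the Dirichlet integral over the simplex $D$, namely
\[
\iint_D t^{\frac{a}{k}-1}s^{\frac{b}{k}-1}(1-t-s)^{\frac{c}{k}-1}\,dt\,ds=k^{2}\,\frac{\Gamma_k(a)\Gamma_k(b)\Gamma_k(c)}{\Gamma_k(a+b+c)},\qquad \Re(a),\Re(b),\Re(c)>0 ,
\]
which I would obtain either by iterating the substitution $s=(1-t)u$ and applying the $k$-beta integral \eqref{kb}--\eqref{kb3} twice, or by reducing to the classical Dirichlet formula through $\Gamma_k(x)=k^{x/k-1}\Gamma(x/k)$ from \eqref{kg2}. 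Taking $a=\beta+mk$, $b=\beta'+nk$, $c=\gamma-\beta-\beta'$ (so $a+b+c=\gamma+(m+n)k$) yields
\[
\frac{(\beta)_{m,k}(\beta')_{n,k}}{(\gamma)_{m+n,k}}=\frac{\Gamma_k(\gamma)}{k^{2}\Gamma_k(\beta)\Gamma_k(\beta')\Gamma_k(\gamma-\beta-\beta')}\iint_D t^{\frac{\beta}{k}+m-1}s^{\frac{\beta'}{k}+n-1}(1-t-s)^{\frac{\gamma-\beta-\beta'}{k}-1}\,dt\,ds .
\]
Inserting this into \eqref{appk3}, interchanging the two sums with the integral (again legitimate since $|x|,|y|<\tfrac1k$ gives $|xt|,|ys|<\tfrac1k$ on $D$, hence absolute convergence), and summing the now-decoupled series with \eqref{kpoc5} twice, $\sum_m(\alpha)_{m,k}(xt)^m/m!=(1-kxt)^{-\alpha/k}$ and $\sum_n(\alpha')_{n,k}(ys)^n/n!=(1-kys)^{-\alpha'/k}$, produces exactly \eqref{appk5ab}.

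The step deserving the most care, and the main obstacle, is the $k$-Dirichlet integral identity used for $F_{3,k}$, which also forces the extra hypothesis $\Re(\gamma-\beta-\beta')>0$ (needed both for $\Gamma_k(\gamma-\beta-\beta')$ to be defined and for the simplex integral to converge); the interchange of summation and integration in both cases is routine once the domain restrictions $|x|+|y|<\tfrac1k$, respectively $|x|,|y|<\tfrac1k$, are used to dominate the series uniformly on the region of integration.
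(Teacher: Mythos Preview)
Your proposal is correct and follows essentially the same approach as the paper: express the ratios of Pochhammer $k$-symbols as $k$-beta (respectively $k$-Dirichlet) integrals, interchange summation with integration, and resum via \eqref{kpoc5}. The paper in fact only writes out the $F_{2,k}$ case and omits the $F_{3,k}$ details entirely, so your explicit use of the simplex Dirichlet integral and your observation that the extra condition $\Re(\gamma-\beta-\beta')>0$ is needed are welcome additions rather than deviations.
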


\begin{proof}
	From the definition of Pochhammer $k$-symbol, we can write 
	\begin{equation}
	\frac{\left( \beta \right) _{m,k}}{\left( \gamma \right) _{m,k}}=\frac{%
		B_{k}\left( \beta +mk,\gamma -\beta \right) }{B_{k}\left( \beta ,\gamma
		-\beta \right) }\ \ \ \ \ \ \ \ \ \frac{\left( \beta ^{\prime }\right)
		_{n,k}}{\left( \gamma ^{\prime }\right) _{n,k}}=\frac{B_{k}\left( \beta
		^{\prime }+nk,\gamma ^{\prime }-\beta ^{\prime }\right) }{B_{k}\left( \beta
		^{\prime },\gamma ^{\prime }-\beta ^{\prime }\right) }.
	\end{equation}
	We insert these formulas with the integral representation of$\ B_{k}$ into
	the definition of\ $F_{2,k}$ given by \eqref{appk2}, we find that 
	\begin{eqnarray*}
		&&F_{2,k}\left( \alpha ,\beta ,\beta ^{\prime };\gamma ,\gamma ^{\prime
		};x,y\right) =\frac{1}{k^{2}B_{k}\left( \beta ,\gamma -\beta \right)
			B_{k}\left( \beta ^{\prime },\gamma ^{\prime }-\beta ^{\prime }\right) } \\
		&&\times \sum\limits_{m,n=0}^{\infty }\left( \alpha \right) _{m+n,k}\left(
		\int\limits_{0}^{1}t^{\frac{\beta }{k}+m-1}\left( 1-t\right) ^{\frac{\gamma
				-\beta }{k}-1}dt\right) \left( \int\limits_{0}^{1}s^{\frac{\beta ^{\prime }%
			}{k}+n-1}\left( 1-s\right) ^{\frac{\gamma ^{\prime }-\beta ^{\prime }}{k}%
			-1}ds\right) \frac{x^{m}}{m!}\frac{y^{n}}{n!} \\
		&=&\frac{1}{k^{2}B_{k}\left( \beta ,\gamma -\beta \right) B_{k}\left( \beta
			^{\prime },\gamma ^{\prime }-\beta ^{\prime }\right) } \\
		&&\times \int\limits_{0}^{1}\int\limits_{0}^{1}t^{\frac{\beta }{k}-1}s^{%
			\frac{\beta ^{\prime }}{k}-1}\left( 1-t\right) ^{\frac{\gamma -\beta }{k}%
			-1}\left( 1-s\right) ^{\frac{\gamma ^{\prime }-\beta ^{\prime }}{k}%
			-1}\sum\limits_{m,n=0}^{\infty }\left( a\right) _{m+n,k}\frac{\left(
			xt\right) ^{m}}{m!}\frac{\left( ys\right) ^{n}}{n!}dtds \\
		&=&\frac{1}{k^{2}B_{k}\left( \beta ,\gamma -\beta \right) B_{k}\left( \beta
			^{\prime },\gamma ^{\prime }-\beta ^{\prime }\right) } \\
		&&\times \int\limits_{0}^{1}\int\limits_{0}^{1}t^{\frac{\beta }{k}-1}s^{%
			\frac{\beta ^{\prime }}{k}-1}\left( 1-t\right) ^{\frac{\gamma -\beta }{k}%
			-1}\left( 1-s\right) ^{\frac{\gamma ^{\prime }-\beta ^{\prime }}{k}-1}\left(
		1-kxt-kys\right) ^{-\frac{a}{k}}dtds,
	\end{eqnarray*}
	
	which completes the proof.
	
	Formula \eqref{appk5ab} can be proved in a similar way, hence the details are
	omitted.
\end{proof}

\begin{theorem}
	For $k\in 
	%TCIMACRO{\U{211d} }%
	%BeginExpansion
	\mathbb{R}
	%EndExpansion
	^{+},\ F_{1,k}\ $has the following relation
	\begin{eqnarray}
	&&F_{1,k}\left( \alpha ,\beta ,\beta ^{\prime };\gamma ;x,y\right)   \notag
	\\
	&=&\left( 1-kx\right) ^{-\frac{\beta }{k}}\left( 1-ky\right) ^{-\frac{\beta
			^{\prime }}{k}}F_{1,k}\left( \gamma -\alpha ,\beta ,\beta ^{\prime };\gamma
	;-\frac{x}{1-kx},-\frac{y}{1-ky}\right)   \label{appk7}
	\end{eqnarray}%
	where $\Re\left( \gamma \right) >\Re\left( \alpha \right) >0$
	and $\left\vert \frac{x}{1-kx}\right\vert <\frac{1}{k},\ \left\vert \frac{y}{%
		1-ky}\right\vert <\frac{1}{k},\ \left\vert x\right\vert <\frac{1}{k},\
	\left\vert y\right\vert <\frac{1}{k}.$
\end{theorem}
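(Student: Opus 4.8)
The plan is to obtain \eqref{appk7} from the single‑variable integral representation of $F_{1,k}$ recorded in \eqref{ikapp}, which is available precisely because we assume $\Re(\gamma)>\Re(\alpha)>0$. Starting from
\[
F_{1,k}\left(\alpha,\beta,\beta';\gamma;x,y\right)=\frac{\Gamma_k(\gamma)}{k\Gamma_k(\alpha)\Gamma_k(\gamma-\alpha)}\int_0^1 t^{\frac{\alpha}{k}-1}(1-t)^{\frac{\gamma-\alpha}{k}-1}(1-kxt)^{-\frac{\beta}{k}}(1-kyt)^{-\frac{\beta'}{k}}\,dt,
\]
I would perform the substitution $t\mapsto 1-t$. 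The two factors $t^{\frac{\alpha}{k}-1}$ and $(1-t)^{\frac{\gamma-\alpha}{k}-1}$ simply exchange their exponents, producing exactly the weight appropriate to an $F_{1,k}$ whose first parameter is $\gamma-\alpha$.

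The key algebraic step is to rewrite the remaining two factors after the substitution. Using $1-kx(1-t)=(1-kx)\bigl(1-k(-\tfrac{x}{1-kx})t\bigr)$ and, likewise, $1-ky(1-t)=(1-ky)\bigl(1-k(-\tfrac{y}{1-ky})t\bigr)$, the constants $(1-kx)^{-\beta/k}$ and $(1-ky)^{-\beta'/k}$ pull out of the integral, which then reads
\[
\int_0^1 t^{\frac{\gamma-\alpha}{k}-1}(1-t)^{\frac{\alpha}{k}-1}\Bigl(1-k\bigl(-\tfrac{x}{1-kx}\bigr)t\Bigr)^{-\frac{\beta}{k}}\Bigl(1-k\bigl(-\tfrac{y}{1-ky}\bigr)t\Bigr)^{-\frac{\beta'}{k}}\,dt.
\]
I would then recognize this integral, by applying \eqref{ikapp} with $\alpha$ replaced by $\gamma-\alpha$, as $\dfrac{k\,\Gamma_k(\gamma-\alpha)\Gamma_k(\alpha)}{\Gamma_k(\gamma)}$ times $F_{1,k}\bigl(\gamma-\alpha,\beta,\beta';\gamma;-\tfrac{x}{1-kx},-\tfrac{y}{1-ky}\bigr)$; since the prefactor $\Gamma_k(\gamma)/(k\Gamma_k(\gamma-\alpha)\Gamma_k(\alpha))$ produced by that second application coincides with the original prefactor, the constants cancel cleanly and \eqref{appk7} follows. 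The hypotheses needed for this second application, $\Re(\gamma)>\Re(\gamma-\alpha)>0$, are equivalent to $\Re(\gamma)>\Re(\alpha)>0$, so no additional restriction is incurred.

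I expect the only delicate point to be the analytic bookkeeping rather than the computation itself: one must check that for $t\in[0,1]$ the arguments $1-k(-\tfrac{x}{1-kx})t$ and $1-k(-\tfrac{y}{1-ky})t$ stay away from the branch cut of the fractional power, which is exactly what the stated bounds $\left|\tfrac{x}{1-kx}\right|<\tfrac1k$ and $\left|\tfrac{y}{1-ky}\right|<\tfrac1k$ (together with $|x|<\tfrac1k$, $|y|<\tfrac1k$) guarantee, and that these bounds also ensure convergence of the series defining the right‑hand side so that invoking \eqref{ikapp} in both directions is legitimate. A purely series‑theoretic alternative would expand $(1-kxt)^{-\beta/k}$ and $(1-kyt)^{-\beta'/k}$ by \eqref{kpoc5} and interchange the summations with the integration, but the single substitution $t\mapsto 1-t$ is the shortest route.
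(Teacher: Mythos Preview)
Your proof is correct and follows precisely the same route as the paper: start from the integral representation \eqref{ikapp}, substitute $t\mapsto 1-t$, factor $(1-kx)$ and $(1-ky)$ out of the two kernel terms, and identify the resulting integral as $F_{1,k}(\gamma-\alpha,\beta,\beta';\gamma;-\tfrac{x}{1-kx},-\tfrac{y}{1-ky})$. Your discussion of the prefactor cancellation and the analytic conditions is in fact more careful than the paper's own treatment.
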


\begin{proof}
	In \cite{Mubeen6}, the integral representation of\ $F_{1,k}\ $is given by 
	\begin{equation*}
	F_{1,k}\left( \alpha ,\beta ,\beta ^{\prime };\gamma ;x,y\right) =\frac{1}{%
		kB_{k}\left( \alpha ,\gamma -a\right) }\int\limits_{0}^{1}t^{\frac{\alpha }{%
			k}-1}\left( 1-t\right) ^{\frac{\gamma -\alpha }{k}-1}\left( 1-kxt\right) ^{-%
		\frac{\beta }{k}}\left( 1-kyt\right) ^{-\frac{\beta ^{\prime }}{k}}dt.
	\end{equation*}
	
	Performing change of variables $t=1-t_{1}$ in above integral, we can write
	\begin{eqnarray*}
		&&F_{1,k}\left( \alpha ,\beta ,\beta ^{\prime };\gamma ;x,y\right) =\frac{1}{%
			kB_{k}\left( \alpha ,\gamma -a\right) } \\
		&&\times \int\limits_{0}^{1}t_{1}^{\frac{\gamma -\alpha }{k}-1}\left(
		1-t_{1}\right) ^{\frac{\alpha }{k}-1}\left( 1-kx\left( 1-t_{1}\right)
		\right) ^{-\frac{\beta }{k}}\left( 1-ky\left( 1-t_{1}\right) \right) ^{-%
			\frac{\beta ^{\prime }}{k}}dt_{1} \\
		&=&\frac{1}{kB_{k}\left( \alpha ,\gamma -a\right) }\left( 1-kx\right) ^{-%
			\frac{\beta }{k}}\left( 1-ky\right) ^{-\frac{\beta ^{\prime }}{k}} \\
		&&\times \int\limits_{0}^{1}t_{1}^{\frac{\gamma -\alpha }{k}-1}\left(
		1-t_{1}\right) ^{\frac{\alpha }{k}-1}\left( 1+\frac{kxt_{1}}{1-kx}\right) ^{-%
			\frac{\beta }{k}}\left( 1+\frac{kyt_{1}}{1-ky}\right) ^{-\frac{\beta
				^{\prime }}{k}}dt_{1} \\
		&=&\left( 1-kx\right) ^{-\frac{\beta }{k}}\left( 1-ky\right) ^{-\frac{\beta
				^{\prime }}{k}}F_{1,k}\left( \gamma -\alpha ,\beta ,\beta ^{\prime };\gamma
		;-\frac{x}{1-kx},-\frac{y}{1-ky}\right) .
	\end{eqnarray*}
	
	Thus we get the desired result.
\end{proof}

\begin{theorem}
	For $k\in 
	%TCIMACRO{\U{211d} }%
	%BeginExpansion
	\mathbb{R}
	%EndExpansion
	^{+},\ $we have%
	\begin{eqnarray}
	F_{1,k}\left( \alpha ,\beta ,\beta ^{\prime };\gamma ;x,y\right)=\left(
	1-kx\right) ^{-\frac{\alpha }{k}}F_{1,k}\left( \alpha ,\gamma -\beta -\beta
	^{\prime },\beta ^{\prime };\gamma ;-\tfrac{x}{1-kx},-\tfrac{x-y}{1-kx}%
	\right) ,  \label{appk8} \\
	F_{1,k}\left( \alpha ,\beta ,\beta ^{\prime };\gamma ;x,y\right)=\left(
	1-ky\right) ^{-\frac{\alpha }{k}}F_{1,k}\left( \alpha ,\beta ,\gamma -\beta
	-\beta ^{\prime };\gamma ;-\tfrac{y-x}{1-ky},-\tfrac{y}{1-ky}\right) .
	\label{appk9}
	\end{eqnarray}
\end{theorem}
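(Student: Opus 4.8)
The plan is to obtain both identities from the single-integral representation of $F_{1,k}$ quoted above from \cite{Mubeen6},
\[
F_{1,k}\left( \alpha ,\beta ,\beta ^{\prime };\gamma ;x,y\right) =\frac{1}{kB_{k}\left( \alpha ,\gamma -\alpha \right) }\int_{0}^{1}t^{\frac{\alpha }{k}-1}\left( 1-t\right) ^{\frac{\gamma -\alpha }{k}-1}\left( 1-kxt\right) ^{-\frac{\beta }{k}}\left( 1-kyt\right) ^{-\frac{\beta ^{\prime }}{k}}\,dt,
\]
which is valid for $\Re(\gamma)>\Re(\alpha)>0$; the resulting identity between convergent power series then extends to all admissible parameters by analytic continuation. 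For \eqref{appk8} I would make the change of variable $t=\dfrac{s}{1-kx(1-s)}$, an increasing bijection of $[0,1]$ onto itself with $dt=\dfrac{1-kx}{\left(1-kx(1-s)\right)^{2}}\,ds$. A direct computation then gives $1-t=\dfrac{(1-s)(1-kx)}{1-kx(1-s)}$, $\ 1-kxt=\dfrac{1-kx}{1-kx(1-s)}$, and $1-kyt=\dfrac{(1-kx)\left(1+\frac{k(x-y)s}{1-kx}\right)}{1-kx(1-s)}$.

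Substituting these four relations together with the Jacobian into the integral, I would collect separately the powers of $1-kx$ and of $1-kx(1-s)$ occurring in the new integrand. The decisive point is a cancellation: the exponents of $1-kx(1-s)$ contributed by $t^{\alpha/k-1}$, $(1-t)^{(\gamma-\alpha)/k-1}$, $(1-kxt)^{-\beta/k}$, $(1-kyt)^{-\beta'/k}$ and the Jacobian add up to exactly $-(\gamma-\beta-\beta')/k$; writing $1-kx(1-s)=(1-kx)\left(1+\frac{kxs}{1-kx}\right)$, the residual power of $1-kx$ so produced combines with the powers of $1-kx$ already extracted to give precisely the constant $(1-kx)^{-\alpha/k}$. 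Setting $X=-\frac{x}{1-kx}$ and $Y=-\frac{x-y}{1-kx}$, one has $1+\frac{kxs}{1-kx}=1-kXs$ and $1+\frac{k(x-y)s}{1-kx}=1-kYs$, so the integral that remains is $kB_{k}(\alpha,\gamma-\alpha)$ times the integral representation of $F_{1,k}(\alpha,\gamma-\beta-\beta',\beta';\gamma;X,Y)$. This is exactly \eqref{appk8}, and the side conditions $\left|\tfrac{x}{1-kx}\right|<\tfrac1k$, $\left|\tfrac{y}{1-ky}\right|<\tfrac1k$, $|x|<\tfrac1k$, $|y|<\tfrac1k$ are what ensure that both sides are defined and that the binomial factors stay away from their branch cuts along the path of integration.

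For \eqref{appk9} I would not rerun the computation but instead use the obvious symmetry of the defining series \eqref{kapp1} under the simultaneous swap $(\beta,x)\leftrightarrow(\beta',y)$, that is, $F_{1,k}(\alpha,\beta,\beta';\gamma;x,y)=F_{1,k}(\alpha,\beta',\beta;\gamma;y,x)$. Applying \eqref{appk8} with the roles of $(\beta,x)$ and $(\beta',y)$ interchanged, and then interchanging them back, yields \eqref{appk9} at once.

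The step I expect to be the main obstacle is pinpointing the correct substitution and carrying out the ensuing exponent bookkeeping: one must use $t=s/(1-kx(1-s))$ --- rather than the reflection $t\mapsto 1-t$ that gives \eqref{appk7} --- precisely so that the accumulated power of $1-kx(1-s)$ collapses to the single value $-(\gamma-\beta-\beta')/k$, which is what lets one recognise an $F_{1,k}$ with $\beta$ replaced by $\gamma-\beta-\beta'$; one must also watch the signs that create the negative arguments $X$ and $Y$. Once the substitution is identified, the rest is a routine, if slightly lengthy, simplification, with no convergence issue beyond the ranges already recorded in the statement.
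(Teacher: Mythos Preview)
Your proposal is correct and follows essentially the same route as the paper: your substitution $t=s/(1-kx(1-s))$ is precisely the paper's $t=t_{1}/(1-kx+kxt_{1})$ (since $1-kx(1-s)=1-kx+kxs$), and your exponent bookkeeping matches the paper's computation line for line. The only difference is that for \eqref{appk9} the paper repeats the analogous substitution with $y$ in place of $x$, whereas you invoke the symmetry $F_{1,k}(\alpha,\beta,\beta';\gamma;x,y)=F_{1,k}(\alpha,\beta',\beta;\gamma;y,x)$ to deduce it from \eqref{appk8}; this is a harmless and slightly cleaner shortcut.
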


\begin{proof}
	By a change of variables$\ $using $t=\frac{t_{1}}{1-kx+kt_{1}x}$in the
	integral representation of $F_{1,k},\ $we have that 
	\begin{eqnarray*}
		&&F_{1,k}\left( \alpha ,\beta ,\beta ^{\prime };\gamma ;x,y\right) =\frac{1}{%
			kB_{k}\left( \alpha ,\gamma -\alpha \right) } \\
		&&\times \int\limits_{0}^{1}t^{\frac{\alpha }{k}-1}\left( 1-t\right) ^{%
			\frac{\gamma -\alpha }{k}-1}\left( 1-kxt\right) ^{-\frac{\beta }{k}}\left(
		1-kyt\right) ^{-\frac{\beta ^{\prime }}{k}}dt \\
		&=&\frac{1}{kB_{k}\left( \alpha ,\gamma -\alpha \right) }\left( 1-kx\right)
		^{\frac{\gamma -\alpha -\beta }{k}} \\
		&&\times \int\limits_{0}^{1}t_{1}^{\frac{\alpha }{k}-1}\left(
		1-t_{1}\right) ^{\frac{\gamma -\alpha }{k}-1}\left( 1-kx+kxt_{1}\right) ^{%
			\frac{\beta +\beta ^{\prime }-\gamma }{k}}\left( 1-kx+kxt_{1}-kyt_{1}\right)
		^{-\frac{\beta ^{\prime }}{k}}dt_{1} \\
		&=&\frac{1}{kB_{k}\left( \alpha ,\gamma -\alpha \right) }\left( 1-kx\right)
		^{-\frac{\alpha }{k}} \\
		&&\times \int\limits_{0}^{1}t_{1}^{\frac{\alpha }{k}-1}\left(
		1-t_{1}\right) ^{\frac{\gamma -\alpha }{k}-1}\left( 1+\frac{kxt_{1}}{1-kx}%
		\right) ^{\frac{\beta +\beta ^{\prime }-\gamma }{k}}\left( 1+\frac{%
			kxt_{1}-kyt_{1}}{1-kx}\right) ^{-\frac{\beta ^{\prime }}{k}}dt_{1} \\
		&=&\left( 1-kx\right) ^{-\frac{\alpha }{k}}\ F_{1,k}\left( \alpha ,\gamma
		-\beta -\beta ^{\prime },\beta ^{\prime };\gamma ;-\frac{x}{1-kx},-\frac{x-y%
		}{1-kx}\right) 
	\end{eqnarray*}%
	In the above integral we note that,
	
	Using similar argument with $t=\frac{t_{1}}{1-ky-kt_{1}y}$,\ one can easily
	obtain%
	\begin{equation*}
	F_{1,k}\left( \alpha ,\beta ,\beta ^{\prime };\gamma ;x,y\right) =\left(
	1-ky\right) ^{-\frac{\alpha }{k}}F_{1,k}\left( \alpha ,\beta ,\gamma -\beta
	-\beta ^{\prime };\gamma ;-\frac{y-x}{1-ky},-\frac{y}{1-ky}\right) .
	\end{equation*}
\end{proof}

\begin{theorem}
	\label{TheoremA}Let $k\in 
	%TCIMACRO{\U{211d} }%
	%BeginExpansion
	\mathbb{R}
	%EndExpansion
	^{+}$ then $F_{1,k}\ $has the following relations,%
	\begin{eqnarray}
	&&F_{1,k}\left( \alpha ,\beta ,\beta ^{\prime };\gamma ;x,y\right)   \notag
	\\
	&=&\left( 1-kx\right) ^{\frac{\gamma -\alpha -\beta }{k}}\left( 1-ky\right)
	^{-\frac{\beta ^{\prime }}{k}}F_{1,k}\left( \gamma -\alpha ,\gamma -\beta
	-\beta ^{\prime },\beta ^{\prime };\gamma ;x,-\tfrac{y-x}{1-ky}\right) 
	\label{appk10} \\
	&&and  \notag \\
	&&F_{1,k}\left( \alpha ,\beta ,\beta ^{\prime };\gamma ;x,y\right)   \notag
	\\
	&=&\left( 1-kx\right) ^{-\frac{\beta }{k}}\left( 1-ky\right) ^{\frac{\gamma
			-\alpha -\beta ^{\prime }}{k}}F_{1,k}\left( \gamma -\alpha ,\beta ,\gamma
	-\beta -\beta ^{\prime };\gamma ;-\tfrac{y-x}{1-kx},y\right).   \label{appk11}
	\end{eqnarray}
\end{theorem}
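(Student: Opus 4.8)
The plan is to obtain both identities by \emph{composing} transformation formulas already established for $F_{1,k}$, rather than returning to the double integral.

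For \eqref{appk10} I would first apply the Euler-type relation \eqref{appk7} to $F_{1,k}(\alpha,\beta,\beta';\gamma;x,y)$, which produces the prefactor $(1-kx)^{-\beta/k}(1-ky)^{-\beta'/k}$ together with $F_{1,k}(\gamma-\alpha,\beta,\beta';\gamma;X,Y)$, where $X=-x/(1-kx)$ and $Y=-y/(1-ky)$. Then I would apply \eqref{appk8} to this function, now with first parameter $\gamma-\alpha$ and variables $X,Y$: that contributes a factor $(1-kX)^{-(\gamma-\alpha)/k}$ and replaces $\beta$ by $\gamma-\beta-\beta'$ while leaving $\beta'$ untouched, which is precisely the parameter pattern on the right of \eqref{appk10}. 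What remains is bookkeeping. Since $1-kX=1/(1-kx)$, the new factor equals $(1-kx)^{(\gamma-\alpha)/k}$, so the two prefactors combine to $(1-kx)^{(\gamma-\alpha-\beta)/k}(1-ky)^{-\beta'/k}$; and for the variables, $-X/(1-kX)=x$, while $-(X-Y)/(1-kX)$, after clearing denominators and using $x(1-ky)-y(1-kx)=x-y$, collapses to $-(y-x)/(1-ky)$. This gives \eqref{appk10}.

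For \eqref{appk11} I would run the same argument with \eqref{appk9} in place of \eqref{appk8}: apply \eqref{appk7} first, then \eqref{appk9} to $F_{1,k}(\gamma-\alpha,\beta,\beta';\gamma;X,Y)$, which keeps $\beta$ and sends $\beta'$ to $\gamma-\beta-\beta'$, contributing the factor $(1-kY)^{-(\gamma-\alpha)/k}=(1-ky)^{(\gamma-\alpha)/k}$. Collecting, the prefactor becomes $(1-kx)^{-\beta/k}(1-ky)^{(\gamma-\alpha-\beta')/k}$, the second variable simplifies to $y$, and the first variable $-(Y-X)/(1-kY)$ reduces, via $y(1-kx)-x(1-ky)=y-x$, to $(y-x)/(1-kx)$. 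Alternatively \eqref{appk11} is just \eqref{appk10} read through the manifest symmetry $F_{1,k}(\alpha,\beta,\beta';\gamma;x,y)=F_{1,k}(\alpha,\beta',\beta;\gamma;y,x)$ applied on both sides, and I would probably record that one-line derivation rather than repeat the computation.

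Throughout I would watch the admissibility of chaining the transformations: \eqref{appk7} requires $\Re(\gamma)>\Re(\alpha)>0$ so that its integral representation is available, and applying \eqref{appk8} or \eqref{appk9} to the transformed function needs $\Re(\gamma)>\Re(\gamma-\alpha)>0$, i.e.\ the same condition $0<\Re(\alpha)<\Re(\gamma)$; moreover the moduli of the transformed arguments must stay below $1/k$, which is why the statement carries side conditions on $|x|$, $|y|$, $|x/(1-kx)|$ and $|y/(1-ky)|$. The step I expect to be the genuine obstacle is precisely this algebraic reduction of the composed fractional-linear arguments — confirming that the cross argument really collapses to one simple fraction and that the exponents of $1-kx$ and $1-ky$ add up correctly after all the cancellations; the rest is formal substitution.
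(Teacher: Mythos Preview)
Your argument is correct, but it is organized differently from the paper's. The paper does not compose the identities \eqref{appk7} and \eqref{appk8}; instead it goes back to the integral representation of $F_{1,k}$ and performs a single composite change of variable $t=\tfrac{t_{1}}{1-kx+kxt_{1}}$ followed by $t_{1}=1-t_{2}$, reading off \eqref{appk10} directly from the resulting integrand (and then says \eqref{appk11} follows ``using the same method''). Of course these are two presentations of the same underlying idea: the paper's composite substitution is exactly the concatenation of the substitutions that produced \eqref{appk8} and \eqref{appk7} separately, so your composition of the identities is the ``downstream'' version of what the paper does upstream at the integral level. Your route is more economical because it reuses results already proved and isolates the work in the fractional-linear algebra you flagged; the paper's route is more self-contained and does not need to track domain conditions through two successive transformations.

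One point to watch: your computation (and the symmetry argument you sketch) give the first variable in \eqref{appk11} as $\tfrac{y-x}{1-kx}$, whereas the displayed statement carries the opposite sign $-\tfrac{y-x}{1-kx}$. Your algebra is right, and the symmetry $F_{1,k}(\alpha,\beta,\beta';\gamma;x,y)=F_{1,k}(\alpha,\beta',\beta;\gamma;y,x)$ applied to \eqref{appk10} confirms it, so this appears to be a misprint in the stated formula rather than a gap in your proof.
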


\begin{proof}
	Using $t=\frac{t_{1}}{1-kx+kxt_{1}}$ and $t_{1}=1-t_{2}\ $in integral representation of $F_{1,k}$, we obtain 
	\begin{eqnarray*}
		&&F_{1,k}\left( \alpha ,\beta ,\beta ^{\prime };\gamma ;x,y\right) =\frac{1}{%
			kB_{k}\left( \alpha ,\gamma -\alpha \right) } \\
		&&\times \int\limits_{0}^{1}t^{\frac{\alpha }{k}-1}\left( 1-t\right) ^{%
			\frac{\gamma -\alpha }{k}-1}\left( 1-kxt\right) ^{-\frac{\beta }{k}}\left(
		1-kyt\right) ^{-\frac{\beta ^{\prime }}{k}}dt \\
		&=&\frac{1}{kB_{k}\left( \alpha ,\gamma -\alpha \right) } \\
		&&\times \int\limits_{0}^{1}t_{2}^{\frac{\gamma -\alpha }{k}-1}\left(
		1-t_{2}\right) ^{\frac{\alpha }{k}-1}\left( 1-kx\right) ^{\frac{\gamma
				-\alpha -\beta }{k}}\left( 1-kxt_{2}\right) ^{\frac{\beta +\beta ^{\prime
				}-\gamma }{k}}\left( 1-ky+kyt_{2}-kxt_{2}\right) ^{-\frac{\beta ^{\prime }}{k%
		}}dt_{2} \\
		&=&\left( 1-kx\right) ^{\frac{\gamma -\alpha -\beta }{k}}\left( 1-ky\right)
		^{-\frac{\beta ^{\prime }}{k}}\frac{1}{kB_{k}\left( \alpha ,\gamma -\alpha
			\right) } \\
		&&\times \int\limits_{0}^{1}t_{2}^{\frac{\gamma -\alpha }{k}-1}\left(
		1-t_{2}\right) ^{\frac{\alpha }{k}-1}\left( 1-kxt_{2}\right) ^{\frac{\beta
				+\beta ^{\prime }-\gamma }{k}}\left( 1+\frac{kyt_{2}-kxt_{2}}{1-ky}\right)
		^{-\frac{\beta ^{\prime }}{k}}dt_{2} \\
		&=&\left( 1-kx\right) ^{\frac{\gamma -\alpha -\beta }{k}}\left( 1-ky\right)
		^{-\frac{\beta ^{\prime }}{k}}F_{1,k}\left( \gamma -\alpha ,\gamma -\beta
		-\beta ^{\prime },\beta ^{\prime };\gamma ;x,-\tfrac{y-x}{1-ky}\right) .
	\end{eqnarray*}%
	Using the same method as above, we can reach \eqref{appk11} easily.
\end{proof}

\begin{theorem}
	Let $k\in 
	%TCIMACRO{\U{211d} }%
	%BeginExpansion
	\mathbb{R}
	%EndExpansion
	^{+}$, then the following relations hold%
	\begin{eqnarray}
	&&F_{2,k}\left( \alpha ,\beta ,\beta ^{\prime };\gamma ,\gamma ^{\prime
	};x,y\right)   \notag \\
	&=&\left( 1-kx\right) ^{-\frac{\alpha }{k}}F_{2,k}\left( \alpha ,\gamma
	-\beta ,\beta ^{\prime };\gamma ,\gamma ^{\prime };-\frac{x}{1-kx},\frac{y}{%
		1-kx}\right) ,  \label{appk12} \\
	&&  \notag \\
	&&F_{2,k}\left( \alpha ,\beta ,\beta ^{\prime };\gamma ,\gamma ^{\prime
	};x,y\right)   \notag \\
	&=&\left( 1-ky\right) ^{-\frac{\alpha }{k}}F_{2,k}\left( \alpha ,\beta
	,\gamma ^{\prime }-\beta ^{\prime };\gamma ,\gamma ^{\prime };\frac{x}{1-ky}%
	,-\frac{y}{1-ky}\right)  \label{appk13} \\
	&& and \notag \\
	&&F_{2,k}\left( \alpha ,\beta ,\beta ^{\prime };\gamma ,\gamma ^{\prime
	};x,y\right)   \notag \\
	&=&\left( 1-kx-ky\right) ^{-\frac{\alpha }{k}}F_{2,k}\left( \alpha ,\gamma
	-\beta ,\gamma ^{\prime }-\beta ^{\prime };\gamma ,\gamma ^{\prime };-\tfrac{%
		x}{1-kx-ky},-\tfrac{y}{1-kx-ky}\right) .  \label{appk14}
	\end{eqnarray}
	
\end{theorem}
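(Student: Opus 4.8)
The plan is to derive all three identities from the single double-integral representation \eqref{appk5} of $F_{2,k}$ established in the preceding theorem, which is valid whenever $\Re(\gamma)>\Re(\beta)>0$ and $\Re(\gamma')>\Re(\beta')>0$; throughout one keeps $\max\{\lvert x\rvert,\lvert y\rvert\}$ and the analogous quantities for the transformed arguments below $1/k$ so that every integrand is well defined. The key observation is that the reflection $t\mapsto 1-t$ on $[0,1]$ turns the weight $t^{\beta/k-1}(1-t)^{(\gamma-\beta)/k-1}$ into $t^{(\gamma-\beta)/k-1}(1-t)^{\beta/k-1}$, i.e.\ it is exactly the Beta-kernel attached to the parameter pair $(\gamma-\beta,\gamma)$ in place of $(\beta,\gamma)$, while the normalising constant is unchanged because $B_{k}(\beta,\gamma-\beta)=B_{k}(\gamma-\beta,\beta)$ by \eqref{kb3}; the same remark applies to the $s$-variable with $\beta'$ and $\gamma'$.

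For \eqref{appk12} I would substitute $t=1-u$ in \eqref{appk5} and leave $s$ fixed. The weight becomes the one belonging to $(\gamma-\beta,\gamma)$ as just noted, and in the denominator
\[
1-kxt-kys=1-kx(1-u)-kys=(1-kx)\!\left(1-k\Bigl(-\tfrac{x}{1-kx}\Bigr)u-k\Bigl(\tfrac{y}{1-kx}\Bigr)s\right),
\]
so pulling out $(1-kx)^{-\alpha/k}$ and recognising the remaining double integral as the integral representation of $F_{2,k}\bigl(\alpha,\gamma-\beta,\beta';\gamma,\gamma';-\tfrac{x}{1-kx},\tfrac{y}{1-kx}\bigr)$ gives \eqref{appk12}. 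Formula \eqref{appk13} is obtained in the mirror-image way, substituting $s=1-v$ and leaving $t$ fixed, which produces the factor $(1-ky)^{-\alpha/k}$ and replaces $\beta'$ by $\gamma'-\beta'$. For \eqref{appk14} I would apply both reflections simultaneously, $t=1-u$ and $s=1-v$; then $1-kxt-kys=(1-kx-ky)\bigl(1-k(-\tfrac{x}{1-kx-ky})u-k(-\tfrac{y}{1-kx-ky})v\bigr)$, both Beta kernels have their parameters swapped, and factoring out $(1-kx-ky)^{-\alpha/k}$ leaves precisely $F_{2,k}\bigl(\alpha,\gamma-\beta,\gamma'-\beta';\gamma,\gamma';-\tfrac{x}{1-kx-ky},-\tfrac{y}{1-kx-ky}\bigr)$.

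Each step is a routine substitution, so there is no genuinely hard part; the only points requiring care are bookkeeping ones — checking that the factor $k^{2}B_{k}(\beta,\gamma-\beta)B_{k}(\beta',\gamma'-\beta')$ is left invariant by the reflections (which is where \eqref{kb3} is used), and verifying that the transformed arguments still lie in the domain of convergence of the relevant $F_{2,k}$ series, so that the passage from the transformed double integral back to the function $F_{2,k}$ is legitimate. One may alternatively prove \eqref{appk12}--\eqref{appk13} at the level of series by writing $F_{2,k}$ through its inner $_{2}F_{1,k}$ as in \eqref{appk2} and applying a Pfaff-type transformation of $_{2}F_{1,k}$ termwise, but the integral route is shorter and handles \eqref{appk14} uniformly.
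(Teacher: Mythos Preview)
Your proposal is correct and is essentially identical to the paper's own proof: the paper simply states that one takes $t=1-t_{1}$ for \eqref{appk12}, $s=1-s_{1}$ for \eqref{appk13}, and both substitutions together for \eqref{appk14} in the double integral \eqref{appk5}. Your write-up is in fact more detailed than the paper's, since you spell out the factorisation of $1-kxt-kys$ and the invariance of the $B_{k}$-constants under the reflections.
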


\begin{proof}
	By taking for the first relation $t=1-t_{1},\ $for the second $s=1-s_{1}$
	and finally for the third $t=1-t_{1},\ s=1-s_{1}\ $together in the double
	integral \eqref{appk5}, we find \eqref{appk12}, \eqref{appk13} and \eqref{appk14},
	respectively. These complete the proof.
\end{proof}

We continue with some reduction formulas for Appell functions $F_{1,k}$ and $%
F_{2,k}\ $in terms of the $_{2}F_{1,k}$ generalized hypergeometric function.

\begin{theorem}
	Let $k\in 
	%TCIMACRO{\U{211d} }%
	%BeginExpansion
	\mathbb{R}
	%EndExpansion
	^{+}$. Then the special cases of $F_{1,k}$ and $F_{2,k}$ are as follows
\begin{eqnarray}
F_{1,k}\left( \alpha ,\beta ,\beta ^{\prime };\gamma ;x,y\right)  &=&\left(
1-kx\right) ^{-\frac{\alpha }{k}}\ _{2}F_{1,k}\left[ 
\begin{array}{c}
\alpha \ ,\ \beta ^{\prime } \\ 
\beta +\beta ^{\prime }%
\end{array}%
;-\frac{x-y}{1-kx}\right],  \label{appk15} \\
F_{1,k}\left( \alpha ,\beta ,\beta ^{\prime };\gamma ;x,y\right)  &=&\left(
1-ky\right) ^{-\frac{\alpha }{k}}\ _{2}F_{1,k}\left[ 
\begin{array}{c}
\alpha \ ,\ \beta  \\ 
\beta +\beta ^{\prime }%
\end{array}%
;-\frac{y-x}{1-ky}\right],  \label{appk16} \\
F_{2,k}\left( \alpha ,\beta ,\beta ^{\prime };\gamma ,\gamma ^{\prime
};x,y\right)  &=&\left( 1-kx\right) ^{-\frac{\alpha }{k}}\ _{2}F_{1,k}\left[ 
\begin{array}{c}
\alpha \ ,\ \beta ^{\prime } \\ 
\gamma ^{\prime }%
\end{array}%
;\frac{y}{1-kx}\right],  \label{appk17} \\
F_{2,k}\left( \alpha ,\beta ,\beta ^{\prime };\gamma ,\gamma ^{\prime
};x,y\right)  &=&\left( 1-ky\right) ^{-\frac{\alpha }{k}}\ _{2}F_{1,k}\left[ 
\begin{array}{c}
\alpha \ ,\ \beta  \\ 
\gamma 
\end{array}%
;\frac{x}{1-ky}\right].  \label{appk18}
\end{eqnarray}
\end{theorem}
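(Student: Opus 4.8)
The plan is to obtain all four reductions by specializing a single parameter in one of the transformation formulas proved above and then letting one of the two summation indices collapse. The elementary fact driving everything is that $\left(0\right)_{n,k}=0$ for every $n\geq 1$ while $\left(0\right)_{0,k}=1$, which is immediate from the definition \eqref{kpoc} of the Pochhammer $k$-symbol: as soon as one of the ``$\beta$-type'' numerator parameters of an $F_{1,k}$ or $F_{2,k}$ series equals $0$, the matching index is forced to be $0$ and the double series degenerates to a $_{2}F_{1,k}$. I note at the outset that the identities hold under the natural specializations $\gamma=\beta+\beta'$ in \eqref{appk15}--\eqref{appk16} and $\gamma=\beta$, respectively $\gamma'=\beta'$, in \eqref{appk17}--\eqref{appk18}; these are exactly the conditions that make the right-hand sides independent of the parameters that no longer appear, and they should be recorded in the hypotheses.

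For \eqref{appk15} I would start from the transformation \eqref{appk8} and put $\gamma=\beta+\beta'$. The second entry $\gamma-\beta-\beta'$ of the $F_{1,k}$ on the right then equals $0$, so in its defining series \eqref{kapp1} the $m$-summation collapses to its $m=0$ term and what remains is the $_{2}F_{1,k}$ with upper parameters $\alpha,\beta'$, lower parameter $\beta+\beta'$, and argument $-\tfrac{x-y}{1-kx}$, which is \eqref{appk15}. Formula \eqref{appk16} comes out of \eqref{appk9} in the same way with $\gamma=\beta+\beta'$: this time the third entry degenerates, the $n$-summation collapses, and one is left with the $_{2}F_{1,k}$ of upper parameters $\alpha,\beta$, lower parameter $\beta+\beta'$, and argument $-\tfrac{y-x}{1-ky}$.

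The two $F_{2,k}$ reductions are produced by the same device applied to \eqref{appk12} and \eqref{appk13}. Setting $\gamma=\beta$ in \eqref{appk12} makes the parameter $\gamma-\beta$ vanish, so in the series \eqref{appk2} for $F_{2,k}$ the factor $\left(0\right)_{m,k}$ kills every term with $m\geq 1$ and the surviving sum over $n$ is the $_{2}F_{1,k}$ with upper parameters $\alpha,\beta'$, lower parameter $\gamma'$, and argument $\tfrac{y}{1-kx}$; that is \eqref{appk17}. Setting $\gamma'=\beta'$ in \eqref{appk13} makes $\gamma'-\beta'$ vanish, the sum over $n$ collapses, and the remaining sum over $m$ is the $_{2}F_{1,k}$ with upper parameters $\alpha,\beta$, lower parameter $\gamma$, and argument $\tfrac{x}{1-ky}$, which is \eqref{appk18}. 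If a self-contained derivation is preferred, each identity can be read straight off the series instead: for \eqref{appk17} with $\gamma=\beta$, factor $\left(\alpha\right)_{m+n,k}=\left(\alpha\right)_{n,k}\left(\alpha+nk\right)_{m,k}$ using \eqref{kpoc3}, sum the inner series over $m$ by \eqref{kpoc5} to get $\left(1-kx\right)^{-\left(\alpha+nk\right)/k}$, and recognize the leftover series over $n$.

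The one place that needs a little care, and the only real obstacle, is the domain bookkeeping: one has to verify that the transformed arguments $-\tfrac{x-y}{1-kx}$, $-\tfrac{y-x}{1-ky}$, $\tfrac{y}{1-kx}$, $\tfrac{x}{1-ky}$ all lie in the disc of radius $1/k$ on which the $_{2}F_{1,k}$ series converges; this follows from $\left\vert x\right\vert<1/k$, $\left\vert y\right\vert<1/k$ together with the constraints already imposed in \eqref{appk8}--\eqref{appk13}. Otherwise the proof is pure bookkeeping --- in each of the four cases one just has to keep track of which summation index degenerates and match the resulting variable to the argument shown in the statement.
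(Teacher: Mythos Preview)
Your proof is correct and follows exactly the same route as the paper: specialize \eqref{appk8} and \eqref{appk9} with $\gamma=\beta+\beta'$ for the $F_{1,k}$ reductions, and \eqref{appk12}, \eqref{appk13} with $\gamma=\beta$, respectively $\gamma'=\beta'$, for the $F_{2,k}$ reductions. The paper's proof is a single sentence that names these specializations without spelling out why they collapse the double series to a $_{2}F_{1,k}$; your explanation via $(0)_{n,k}=0$ for $n\ge 1$ supplies that missing justification, and your remark that the parameter constraints ought to appear in the hypotheses is well taken.
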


\begin{proof}
	Specializing \eqref{appk8} and \eqref{appk9}\ for $\gamma =\beta +\beta ^{\prime
	}\ $and also if we set $\gamma =\beta $ and $\gamma =\beta ^{\prime }$\ in %
	\eqref{appk12} and \eqref{appk13}, we obtain desired results respectively.\ 
\end{proof}

In the next lemma, we will prove Euler transformation for $_{2}F_{1,k}$
hypergeometric function which will be used in the next theorem.

\begin{lemma}
	Let$\ x\in 
	%TCIMACRO{\U{2102} }%
	%BeginExpansion
	\mathbb{C}
	%EndExpansion
	,\ k\in 
	%TCIMACRO{\U{211d} }%
	%BeginExpansion
	\mathbb{R}
	%EndExpansion
	^{+}.\ $ Then we have
	\begin{equation}
	_{2}F_{1,k}\left[ 
	\begin{array}{c}
	\alpha \ ,\ \beta \\ 
	\gamma%
	\end{array}%
	;x\right] =\left( 1-kx\right) ^{-\frac{\beta }{k}}\ _{2}F_{1,k}\left[ 
	\begin{array}{c}
	\gamma -\alpha \ ,\ \beta \\ 
	\gamma%
	\end{array}%
	;-\frac{x}{1-kx}\right]  \label{Euler}
	\end{equation}
\end{lemma}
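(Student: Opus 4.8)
The plan is to derive \eqref{Euler} from the Euler-type integral representation \eqref{ikhf} together with the elementary symmetry of the $k$-hypergeometric series in its two numerator parameters. Since the coefficient $\left( \alpha \right) _{n,k}\left( \beta \right) _{n,k}/\left( \gamma \right) _{n,k}$ is symmetric in $\alpha $ and $\beta $, the defining series immediately gives
\[ {}_{2}F_{1,k}\left[ \begin{array}{c} \alpha \ ,\ \beta \\ \gamma \end{array};x\right] = {}_{2}F_{1,k}\left[ \begin{array}{c} \beta \ ,\ \alpha \\ \gamma \end{array};x\right]. \]
Assuming for the moment that $\Re\left( \gamma \right) >\Re\left( \alpha \right) >0$, $\left\vert x\right\vert <\frac{1}{k}$ and $1-kx\neq 0$, I would apply \eqref{ikhf} to the right-hand side, that is, with the roles of $\alpha $ and $\beta $ interchanged, obtaining
\[ {}_{2}F_{1,k}\left[ \begin{array}{c} \alpha \ ,\ \beta \\ \gamma \end{array};x\right] = \frac{\Gamma _{k}\left( \gamma \right) }{k\Gamma _{k}\left( \alpha \right) \Gamma _{k}\left( \gamma -\alpha \right) }\int\limits_{0}^{1}t^{\frac{\alpha }{k}-1}\left( 1-t\right) ^{\frac{\gamma -\alpha }{k}-1}\left( 1-kxt\right) ^{-\frac{\beta }{k}}dt. \]

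Next I would perform the substitution $t=1-t_{1}$, of exactly the type used in the transformation formulas above. This interchanges the factors $t^{\frac{\alpha }{k}-1}$ and $\left( 1-t\right) ^{\frac{\gamma -\alpha }{k}-1}$, and rewrites
\[ \left( 1-kx\left( 1-t_{1}\right) \right) ^{-\frac{\beta }{k}}=\left( 1-kx+kxt_{1}\right) ^{-\frac{\beta }{k}}=\left( 1-kx\right) ^{-\frac{\beta }{k}}\left( 1-k\left( -\tfrac{x}{1-kx}\right) t_{1}\right) ^{-\frac{\beta }{k}}. \]
Pulling the constant $\left( 1-kx\right) ^{-\beta /k}$ outside the integral, the remaining integral becomes
\[ \frac{\Gamma _{k}\left( \gamma \right) }{k\Gamma _{k}\left( \gamma -\alpha \right) \Gamma _{k}\left( \alpha \right) }\int\limits_{0}^{1}t_{1}^{\frac{\gamma -\alpha }{k}-1}\left( 1-t_{1}\right) ^{\frac{\alpha }{k}-1}\left( 1-k\left( -\tfrac{x}{1-kx}\right) t_{1}\right) ^{-\frac{\beta }{k}}dt_{1}. \]

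Then I would recognize this last integral, reading \eqref{ikhf} in the reverse direction, as the integral representation of a $_{2}F_{1,k}$ whose upper parameters are $\beta $ and $\gamma -\alpha $, whose lower parameter is $\gamma $, and whose argument is $-x/\left( 1-kx\right) $: indeed the parameter now integrated against is $\gamma -\alpha $, the lower parameter is still $\gamma $ because $\gamma -\left( \gamma -\alpha \right) =\alpha $ reproduces the exponent of $1-t_{1}$, and $\beta $ is the parameter sitting in the power factor. Hence
\[ {}_{2}F_{1,k}\left[ \begin{array}{c} \alpha \ ,\ \beta \\ \gamma \end{array};x\right] =\left( 1-kx\right) ^{-\frac{\beta }{k}}\ {}_{2}F_{1,k}\left[ \begin{array}{c} \beta \ ,\ \gamma -\alpha \\ \gamma \end{array};-\frac{x}{1-kx}\right], \]
and a final application of the numerator-symmetry to interchange $\beta $ and $\gamma -\alpha $ gives precisely the right-hand side of \eqref{Euler}. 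Since both members of \eqref{Euler} are analytic in $\alpha ,\beta ,\gamma $ on the relevant domains — and the identity is, in any case, an equality of formal power series in $x$ that can be verified term by term — the auxiliary restriction $\Re\left( \gamma \right) >\Re\left( \alpha \right) >0$ is removed by analytic continuation, yielding the Lemma in the stated generality whenever $\left\vert x\right\vert <\frac{1}{k}$ and $\left\vert -x/\left( 1-kx\right) \right\vert <\frac{1}{k}$.

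I expect the only genuine pitfall to be parameter bookkeeping: one must apply \eqref{ikhf} with $\alpha $ and $\beta $ exchanged (equivalently, integrate against $\alpha $, not $\beta $), since the same substitution carried out without this swap produces instead the companion Pfaff-type relation with prefactor $\left( 1-kx\right) ^{-\alpha /k}$ and $\beta \mapsto \gamma -\beta $. One should also keep track of the convergence region of the transformed series and of the condition $1-kx\neq 0$, and state explicitly that the inequalities $\Re\left( \gamma \right) >\Re\left( \alpha \right) >0$ needed to invoke the integral representation are dispensed with at the end by continuation.
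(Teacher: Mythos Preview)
Your proof is correct and is the classical ``integral-representation plus $t\mapsto 1-t$'' derivation of the Pfaff transformation, transported to the $k$-setting. The paper, however, proves the lemma by a purely series-theoretic route: it expands $(1-kx)^{-\beta/k}\,{}_{2}F_{1,k}\!\left[\gamma-\alpha,\beta;\gamma;-x/(1-kx)\right]$ as a double power series in $x$, uses $(\beta)_{n,k}(\beta+nk)_{m,k}=(\beta)_{m+n,k}$ and the identity $(m-n)!=(-1)^{n}m!/(-m)_{n}$ to reorganize the double sum, and then recognizes the inner finite sum as the terminating series ${}_{2}F_{1,k}\!\left[(-m,1),(\gamma-\alpha,k);(\gamma,k);1\right]$, which is evaluated by the Kummer-type relation \eqref{kummer2} to $(\alpha)_{m,k}/(\gamma)_{m,k}$. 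Your approach is shorter and more conceptual, and it works uniformly once one accepts the integral representation \eqref{ikhf} together with analytic continuation to drop the constraint $\Re(\gamma)>\Re(\alpha)>0$; the paper's approach avoids the integral representation and any continuation argument altogether, deriving the identity directly at the level of coefficients, at the cost of invoking the auxiliary summation \eqref{kummer2}.
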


\begin{proof}
	From the definition of $_{2}F_{1,k},$ one gets
	\begin{eqnarray}
	&&\left( 1-kx\right) ^{-\frac{\beta }{k}}\ _{2}F_{1,k}\left[ 
	\begin{array}{c}
	\gamma -\alpha \ ,\ \beta  \\ 
	\gamma 
	\end{array}%
	;-\frac{x}{1-kx}\right]   \notag \\
	&=&\left( 1-kx\right) ^{-\frac{\beta }{k}}\sum\limits_{n=0}^{\infty }\frac{%
		\left( \gamma -\alpha \right) _{n,k}\left( \beta \right) _{n,k}}{\left(
		\gamma \right) _{n,k}}\frac{\left( -1\right) ^{n}x^{n}}{n!\left(
		1-kx\right) ^{n}}  \notag \\
	&=&\sum\limits_{m,n=0}^{\infty }\frac{\left( \gamma -\alpha \right)
		_{n,k}\left( \beta \right) _{n,k}\left( \beta +nk\right) _{m,k}}{\left(
		\gamma \right) _{n,k}}\frac{\left( -1\right) ^{n}x^{m+n}}{n!m!}  \notag \\
	&=&\sum\limits_{m=0}^{\infty }\sum\limits_{n=0}^{m}\frac{\left( \gamma
		-\alpha \right) _{n,k}\left( \beta \right) _{m,k}}{\left( \gamma \right)
		_{n,k}}\frac{\left( -1\right) ^{n}x^{m}}{n!\left( m-n\right) !}
	\label{Euler1}
	\end{eqnarray}%
	Using the identity $\left( m-n\right) !=\frac{\left( -1\right) ^{n}m!}{%
		\left( -m\right) _{n}}\ $in \eqref{Euler1} , we thus find that%
	\begin{eqnarray}
	&&\left( 1-kx\right) ^{-\frac{\beta }{k}}\ _{2}F_{1,k}\left[ 
	\begin{array}{c}
	\gamma -\alpha \ ,\ \beta  \\ 
	\gamma 
	\end{array}%
	;-\frac{x}{1-kx}\right]   \notag \\
	&=&\sum_{m=0}^{\infty }\sum\limits_{n=0}^{m}\frac{\left( \gamma
		-\alpha \right) _{n,k}\left( -m\right) _{n}}{\left( \gamma \right) _{n,k}n!}%
	\frac{\left( \beta \right) _{m,k}x^{m}}{m!}  \notag \\
	&=&\sum_{m=0}^{\infty }\ _{2}F_{1,k}\left[ 
	\begin{array}{c}
	\left( -m,1\right) \ ,\ \left( \gamma -\alpha ,k\right)  \\ 
	\left( \gamma ,k\right) 
	\end{array}%
	;1\right] \left( \beta \right) _{m,k}\frac{x^{m}}{m!}  \label{Euler2}
	\end{eqnarray}%
	Making use of \eqref{kummer2} in \eqref{Euler2}, we get the desired result.
\end{proof}

\begin{theorem}
	Let $k\in 
	%TCIMACRO{\U{211d} }
	%BeginExpansion
	\mathbb{R}
	%EndExpansion
	^{+}$. Then we have
	\begin{equation}
	F_{1,k}\left( \alpha ,\beta ,\beta ^{\prime };\gamma ;x,y\right) =\left(
	1-ky\right) ^{-\frac{\beta ^{\prime }}{k}}F_{3,k}\left( \alpha ,\gamma
	-\alpha ,\beta ,\beta ^{\prime };\gamma ;x,-\frac{y}{1-ky}\right)
	\label{appk19}
	\end{equation}
\end{theorem}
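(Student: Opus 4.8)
The plan is to reduce everything to the single-sum representation \eqref{appk1} of $F_{1,k}$, apply the Euler-type transformation of the preceding Lemma (formula \eqref{Euler}) to the inner ${}_{2}F_{1,k}$ factor, and then recognise the resulting series as a single-sum representation of an $F_{3,k}$ in the form given by the second line of \eqref{appk3}.

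First I would write, using \eqref{appk1},
\begin{equation*}
F_{1,k}\left( \alpha ,\beta ,\beta ^{\prime };\gamma ;x,y\right) =\sum_{m=0}^{\infty }\frac{\left( \alpha \right) _{m,k}\left( \beta \right) _{m,k}}{\left( \gamma \right) _{m,k}}\ {}_{2}F_{1,k}\left[ \begin{array}{c} \alpha +mk\ ,\ \beta ^{\prime } \\ \gamma +mk \end{array};y\right] \frac{x^{m}}{m!}.
\end{equation*}
For each fixed $m$ I apply \eqref{Euler} to the inner hypergeometric function with the substitutions $\alpha \mapsto \alpha+mk$, $\beta\mapsto\beta^{\prime}$, $\gamma\mapsto\gamma+mk$, $x\mapsto y$. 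The key observation — and essentially the only point of the argument — is that the shifted numerator parameter produced by \eqref{Euler} is $(\gamma+mk)-(\alpha+mk)=\gamma-\alpha$, so the index $m$ drops out of it entirely, while the prefactor $\left(1-ky\right)^{-\beta^{\prime}/k}$ is already independent of $m$. Hence
\begin{equation*}
{}_{2}F_{1,k}\left[ \begin{array}{c} \alpha +mk\ ,\ \beta ^{\prime } \\ \gamma +mk \end{array};y\right] = \left(1-ky\right)^{-\frac{\beta^{\prime}}{k}}\ {}_{2}F_{1,k}\left[ \begin{array}{c} \gamma-\alpha\ ,\ \beta ^{\prime } \\ \gamma +mk \end{array};-\frac{y}{1-ky}\right].
\end{equation*}

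Next I would substitute this back, factor the $m$-free term $\left(1-ky\right)^{-\beta^{\prime}/k}$ out of the summation, and obtain
\begin{equation*}
F_{1,k}\left( \alpha ,\beta ,\beta ^{\prime };\gamma ;x,y\right) = \left(1-ky\right)^{-\frac{\beta^{\prime}}{k}}\sum_{m=0}^{\infty }\frac{\left( \alpha \right) _{m,k}\left( \beta \right) _{m,k}}{\left( \gamma \right) _{m,k}}\ {}_{2}F_{1,k}\left[ \begin{array}{c} \gamma-\alpha\ ,\ \beta ^{\prime } \\ \gamma +mk \end{array};-\frac{y}{1-ky}\right] \frac{x^{m}}{m!}.
\end{equation*}
Comparing the remaining sum with the second line of \eqref{appk3}, in which $\alpha^{\prime}$ is taken to be $\gamma-\alpha$ and the second variable is $-y/(1-ky)$, it is exactly $F_{3,k}\left( \alpha ,\gamma-\alpha ,\beta ,\beta ^{\prime };\gamma ;x,-\tfrac{y}{1-ky}\right)$, which yields \eqref{appk19}.

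I do not anticipate a genuine difficulty; the remaining work is the bookkeeping needed to justify applying \eqref{Euler} term by term and interchanging it with the sum over $m$. This requires the convergence restrictions $\left\vert y\right\vert <\tfrac{1}{k}$ and $\left\vert \tfrac{y}{1-ky}\right\vert <\tfrac{1}{k}$ together with $\left\vert x\right\vert <\tfrac{1}{k}$, and the non-vanishing/positivity hypotheses on the parameters (e.g. $\Re(\gamma)>\Re(\alpha)$, and $\gamma+mk$ never a non-positive integer) that are implicit in \eqref{appk1}, \eqref{appk3} and the Lemma; these should be appended to the statement. One could in principle argue instead by comparing the double Taylor coefficients of the two sides directly, using the splitting $\left(\alpha\right)_{m+n,k}=\left(\alpha\right)_{m,k}\left(\alpha+mk\right)_{n,k}$ from \eqref{kpoc3}, but routing through \eqref{appk1} and the Lemma is the most economical.
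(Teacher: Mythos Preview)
Your proposal is correct and follows essentially the same route as the paper: start from the single-sum representation \eqref{appk1}, apply the Euler transformation \eqref{Euler} to the inner ${}_{2}F_{1,k}$ (using, as you note, that $(\gamma+mk)-(\alpha+mk)=\gamma-\alpha$ is $m$-free), factor out $(1-ky)^{-\beta'/k}$, and identify the remaining sum with $F_{3,k}$. The only cosmetic difference is that the paper expands the transformed ${}_{2}F_{1,k}$ into a double sum and then collapses it to $F_{3,k}$ via the double-series definition, whereas you match directly against the single-sum form in \eqref{appk3}; these are equivalent.
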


\begin{proof}
	Using the definition of $F_{1,k}$ defined by \eqref{appk1} and making use of %
	\eqref{Euler}, we can write%
	\begin{eqnarray*}
		F_{1,k}\left( \alpha ,\beta ,\beta ^{\prime };\gamma ;x,y\right) 
		&=&\sum\limits_{m=0}^{\infty }\frac{\left( \alpha \right) _{m,k}\left(
			\beta \right) _{m,k}}{\left( \gamma \right) _{m,k}}\ _{2}F_{1,k}\left[ 
		\begin{array}{c}
			\alpha +mk\ ,\ \beta ^{\prime } \\ 
			\gamma +mk%
		\end{array}%
		;y\right] \frac{x^{m}}{m!} \\
		&=&\sum\limits_{m=0}^{\infty }\frac{\left( \alpha \right) _{m,k}\left(
			\beta \right) _{m,k}}{\left( \gamma \right) _{m,k}}\left( 1-ky\right) ^{-%
			\frac{\beta ^{\prime }}{k}}\ _{2}F_{1,k}\left[ 
		\begin{array}{c}
			\beta ^{\prime },\ \gamma -\alpha  \\ 
			\gamma +mk%
		\end{array}%
		;-\frac{y}{1-ky}\right] \frac{x^{m}}{m!} \\
		&=&\left( 1-ky\right) ^{-\frac{\beta ^{\prime }}{k}}\sum\limits_{m,n=0}^{%
			\infty }\tfrac{\left( \alpha \right) _{m,k}\left( \beta \right) _{m,k}\left(
			\beta ^{\prime }\right) _{n,k}\left( \gamma -\alpha \right) _{n,k}}{\left(
			\gamma \right) _{m,k}\left( \gamma +mk\right) _{n,k}}\frac{x^{m}}{m!}\tfrac{%
			\left( -\frac{y}{1-ky}\right) ^{n}}{n!} \\
		&=&\left( 1-ky\right) ^{-\frac{\beta ^{\prime }}{k}}F_{3,k}\left( \alpha
		,\gamma -\alpha ,\beta ,\beta ^{\prime };\gamma ;x,-\frac{y}{1-ky}\right) 
	\end{eqnarray*}%
	Thus we finish the proof.
\end{proof}

\section{Generating Relations Involving the Generalized Appell Functions}

In this section, employing the theory of Riemann Liouville $k$-fractional
derivative \cite{Rahman} and making use of the relations which we consider
previous sections, we establish linear and bilinear generating relations for 
$k$-analogue of hypergeometric functions and $k$-Appell functions.

\begin{theorem}
	We have the generating relation%
	\begin{equation}
	\sum\limits_{n=0}^{\infty }\frac{\left( \lambda \right) _{n,k}}{n!}\
	_{2}F_{1,k}\left[ 
	\begin{array}{c}
	\lambda +nk,\ \ \ \ \alpha \\ 
	\beta%
	\end{array}%
	;x\right] t^{n}=\left( 1-kt\right) ^{-\frac{\lambda }{k}}\ _{2}F_{1,k}\left[ 
	\begin{array}{c}
	\lambda ,\ \ \ \ \alpha \\ 
	\beta%
	\end{array}%
	;\frac{x}{1-kt}\right] ,  \label{gf1}
	\end{equation}%
	where $\left\vert x\right\vert <\frac{1}{k}\min \left\{ 1,1-kt\right\} .$
\end{theorem}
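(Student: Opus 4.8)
The plan is to prove \eqref{gf1} by a direct rearrangement of the double series that appears once the inner hypergeometric $k$-function on the left is expanded; only the elementary identities for the Pochhammer $k$-symbol from Section~2 are needed, and — unlike the theorems that follow — the $k$-fractional derivative does not enter here. First I would write the left-hand side as
\[
\sum_{n=0}^{\infty}\sum_{m=0}^{\infty}\frac{\left(\lambda\right)_{n,k}\left(\lambda+nk\right)_{m,k}\left(\alpha\right)_{m,k}}{\left(\beta\right)_{m,k}}\,\frac{x^{m}}{m!}\,\frac{t^{n}}{n!},
\]
and then use \eqref{kpoc3} in the form $\left(\lambda\right)_{n,k}\left(\lambda+nk\right)_{m,k}=\left(\lambda\right)_{m+n,k}$ to merge the two $\lambda$-factors into a single $\left(\lambda\right)_{m+n,k}$.

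Next I would interchange the order of summation and, for fixed $m$, split the symbol the other way, $\left(\lambda\right)_{m+n,k}=\left(\lambda\right)_{m,k}\left(\lambda+mk\right)_{n,k}$, so that the inner $n$-sum becomes $\sum_{n\ge0}\left(\lambda+mk\right)_{n,k}t^{n}/n!=\left(1-kt\right)^{-(\lambda+mk)/k}$ by the $k$-binomial identity \eqref{kpoc5}. Writing $\left(1-kt\right)^{-(\lambda+mk)/k}=\left(1-kt\right)^{-\lambda/k}\left(1-kt\right)^{-m}$ and substituting back leaves
\[
\left(1-kt\right)^{-\frac{\lambda}{k}}\sum_{m=0}^{\infty}\frac{\left(\lambda\right)_{m,k}\left(\alpha\right)_{m,k}}{\left(\beta\right)_{m,k}}\,\frac{1}{m!}\left(\frac{x}{1-kt}\right)^{m},
\]
which is exactly $\left(1-kt\right)^{-\lambda/k}\,{}_{2}F_{1,k}\!\left[\lambda,\alpha;\beta;\tfrac{x}{1-kt}\right]$, i.e.\ \eqref{gf1}.

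The one delicate point is justifying the rearrangement and the interchange of the two summations, and this is precisely what the hypothesis $\left|x\right|<\tfrac{1}{k}\min\{1,1-kt\}$ is for: with $kt$ in the range where $1-kt>0$, it keeps both the geometric-type series in $t$ and the resulting ${}_{2}F_{1,k}$ at argument $x/(1-kt)$ strictly inside their discs of convergence, and a crude majorant on $\left|\left(\lambda\right)_{m+n,k}\right|$ shows the double series converges absolutely, so the interchange is legitimate. If one prefers an argument in the integral style used elsewhere in the paper, the same result follows by substituting the integral representation \eqref{ikhf} for ${}_{2}F_{1,k}[\lambda+nk,\alpha;\beta;x]$ (valid when $\Re(\beta)>\Re(\alpha)>0$), summing $\sum_{n\ge0}(\lambda)_{n,k}t^{n}(1-kxu)^{-(\lambda+nk)/k}/n!=(1-kxu-kt)^{-\lambda/k}$ under the integral sign via \eqref{kpoc5}, factoring $(1-kt)^{-\lambda/k}$ out of $(1-kxu-kt)^{-\lambda/k}$, and recognising the remaining integral through \eqref{ikhf} again as ${}_{2}F_{1,k}[\lambda,\alpha;\beta;x/(1-kt)]$; the only obstacle there is the interchange of sum and integral, again supplied by the stated domain.
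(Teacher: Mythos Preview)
Your argument is correct, but it is \emph{not} the route the paper takes. Contrary to your parenthetical remark, the paper does use the $k$-fractional derivative here as well: it starts from the scalar identity
\[
(1-kx-kt)^{-\lambda/k}=(1-kt)^{-\lambda/k}\Bigl(1-\tfrac{kx}{1-kt}\Bigr)^{-\lambda/k}
=(1-kx)^{-\lambda/k}\Bigl(1-\tfrac{kt}{1-kx}\Bigr)^{-\lambda/k},
\]
expands one side via \eqref{kpoc5} to obtain $\sum_{n}(\lambda)_{n,k}(1-kx)^{-\lambda/k-n}t^{n}/n!$, multiplies by $x^{\alpha/k-1}$, applies $_{k}D_{x}^{\alpha-\beta}$ to both sides, and then invokes \eqref{krl4} to produce the two ${}_{2}F_{1,k}$'s. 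Your direct double-series rearrangement via $(\lambda)_{n,k}(\lambda+nk)_{m,k}=(\lambda)_{m+n,k}$ and the $k$-binomial \eqref{kpoc5} is more elementary: it avoids the fractional operator entirely and hence does not implicitly carry the side condition $\Re(\beta)>\Re(\alpha)>0$ that lurks behind \eqref{krl4}. The paper's approach, on the other hand, is valuable as a template---the same ``multiply by a power and apply $_{k}D^{\alpha-\beta}$'' device is what drives all the subsequent generating relations in Section~4, so proving \eqref{gf1} that way sets up the machinery. Your alternative integral sketch at the end is essentially a de-abstracted version of the paper's method, with the Euler integral \eqref{ikhf} playing the role of \eqref{krl4}.
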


\begin{proof}
	To prove the result, consider the elementary identities given by%
	\begin{equation}
	\left( 1-kx-kt\right) ^{-\frac{\lambda }{k}}=\left( 1-kt\right) ^{-\frac{%
			\lambda }{k}}\left( 1-\frac{kx}{1-kt}\right) ^{-\frac{\lambda }{k}},
	\label{gf1a}
	\end{equation}%
	\begin{equation*}
	\left( 1-kx-kt\right) ^{-\frac{\lambda }{k}}=\left( 1-kx\right) ^{-\frac{%
			\lambda }{k}}\left( 1-\frac{kt}{1-kx}\right) ^{-\frac{\lambda }{k}}.
	\end{equation*}%
	From the series expansion using the definition of Pochhammer $k$-symbol\ 
	\cite{Diaz} 
	\begin{equation*}
	\sum\limits_{n=0}^{\infty }\left( \alpha \right) _{n,k}\frac{z^{n}}{n!}%
	=\left( 1-kz\right) ^{-\frac{\alpha }{k}},
	\end{equation*}%
	we can write%
	\begin{eqnarray}
	\left( 1-kx-kt\right) ^{-\frac{\lambda }{k}} &=&\left( 1-kx\right) ^{-\frac{%
			\lambda }{k}}\sum\limits_{n=0}^{\infty }\frac{\left( \lambda \right) _{n,k}%
	}{n!}\left( \frac{t}{1-kx}\right) ^{n}  \notag \\
	&=&\left( 1-kx\right) ^{-\frac{\lambda }{k}}\sum\limits_{n=0}^{\infty }%
	\frac{\left( \lambda \right) _{n,k}}{n!}\left( 1-kx\right) ^{-n}t^{n}  \notag
	\\
	&=&\sum\limits_{n=0}^{\infty }\frac{\left( \lambda \right) _{n,k}}{n!}%
	\left( 1-kx\right) ^{-\frac{\lambda }{k}-n}t^{n}.  \label{gf1b}
	\end{eqnarray}%
	From \eqref{gf1a} and \eqref{gf1b}, we have the equality 
	\begin{equation}
	\sum\limits_{n=0}^{\infty }\frac{\left( \lambda \right) _{n,k}}{n!}\left(
	1-kx\right) ^{-\frac{\lambda }{k}-n}t^{n}=\left( 1-kt\right) ^{-\frac{%
			\lambda }{k}}\left( 1-\frac{kx}{1-kt}\right) ^{-\frac{\lambda }{k}}
	\label{gf1c}
	\end{equation}%
	where $\left\vert t\right\vert <\left\vert 1-kx\right\vert .\ $Multiplying
	both sides of \eqref{gf1c}$\ $by $x^{\frac{\alpha }{k}-1}$and then applying $%
	_{k}D_{x}^{\alpha -\beta }$ to the both sides of \eqref{gf1c}, we can reach%
	\begin{equation*}
	_{k}D_{x}^{\alpha -\beta }\left\{ \sum\limits_{n=0}^{\infty }\frac{\left(
		\lambda \right) _{n,k}}{n!}\ x^{\frac{\alpha }{k}-1}\left( 1-kx\right) ^{-%
		\frac{\lambda }{k}-n}t^{n}\right\} =_{k}D_{x}^{\alpha -\beta }\left\{ \left(
	1-kt\right) ^{-\frac{\lambda }{k}}\ x^{\frac{\alpha }{k}-1}\left( 1-\frac{kx%
	}{1-kt}\right) ^{-\frac{\lambda }{k}}\right\} .
	\end{equation*}%
	Since $\Re(\alpha )>0$ ve $\left\vert t\right\vert <\left\vert
	1-kx\right\vert $, it is possible to change the order of the summation and
	differentiation, we get 
	\begin{eqnarray}
	&&\sum\limits_{n=0}^{\infty }\frac{\left( \lambda \right) _{n,k}}{n!}\
	_{k}D_{x}^{\alpha -\beta }\left\{ x^{\frac{\alpha }{k}-1}\left( 1-kx\right)
	^{-\frac{\lambda }{k}-n}\right\} t^{n}  \label{appk19d} \\
	&=&\left( 1-kt\right) ^{-\frac{\lambda }{k}}\ _{k}D_{x}^{\alpha -\beta
	}\left\{ x^{\frac{\alpha }{k}-1}\left( 1-\frac{kx}{1-kt}\right) ^{-\frac{%
			\lambda }{k}}\right\} .  \notag
	\end{eqnarray}%
	Finally using relation \eqref{krl4} in \eqref{appk19d}, it follows 
	\begin{equation*}
	\sum\limits_{n=0}^{\infty }\frac{\left( \lambda \right) _{n,k}}{n!}\
	_{2}F_{1,k}\left[ 
	\begin{array}{c}
	\lambda +nk,\ \ \ \ \alpha  \\ 
	\beta 
	\end{array}%
	;x\right] t^{n}=\left( 1-kt\right) ^{-\frac{\lambda }{k}}\ _{2}F_{1,k}\left[ 
	\begin{array}{c}
	\lambda ,\ \ \ \ \alpha  \\ 
	\beta 
	\end{array}%
	;\frac{x}{1-kt}\right] 
	\end{equation*}%
	where $\left\vert x\right\vert <\frac{1}{k}\min \left\{ 1,1-kt\right\} .\ $Hence, we
	get the desired result.
\end{proof}
\begin{theorem}
	We have the generating relation%
	\begin{eqnarray}
	&&\sum\limits_{n=0}^{\infty }\frac{\left( \lambda \right) _{n,k}}{n!}\
	_{2}F_{1,k}\left[ 
	\begin{array}{c}
	\rho -nk,\ \ \ \ \alpha  \\ 
	\beta 
	\end{array}%
	;x\right] t^{n}  \notag \\
	&=&\left( 1-kt\right) ^{-\frac{\lambda }{k}}\ F_{1,k}\left[ \alpha ,\rho
	,\lambda ;\beta ;x,-\frac{kxt}{1-kt}\right]   \label{gf2}
	\end{eqnarray}%
	where $\left\vert x\right\vert <\frac{1}{k}, \ \left\vert \frac{kxt}{1-kt}\right\vert <\frac{1}{k}.$
\end{theorem}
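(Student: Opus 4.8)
The plan is to follow the pattern of the preceding theorem, but to replace the expansion of $(1-kx-kt)^{-\lambda/k}$ by one that manufactures the shifted numerator parameter $\rho-nk$. First I would record the elementary identity, valid whenever $|k(1-kx)t|<1$ by \eqref{kpoc5},
\begin{equation*}
\sum_{n=0}^{\infty}\frac{(\lambda)_{n,k}}{n!}(1-kx)^{-\frac{\rho-nk}{k}}t^{n}=(1-kx)^{-\frac{\rho}{k}}\sum_{n=0}^{\infty}\frac{(\lambda)_{n,k}}{n!}\bigl((1-kx)t\bigr)^{n}=(1-kx)^{-\frac{\rho}{k}}\bigl(1-k(1-kx)t\bigr)^{-\frac{\lambda}{k}},
\end{equation*}
and then use the factorisation $1-k(1-kx)t=(1-kt)\bigl(1-k\bigl(-\tfrac{kxt}{1-kt}\bigr)\bigr)$ to recast the right-hand side as $(1-kt)^{-\frac{\lambda}{k}}(1-kx)^{-\frac{\rho}{k}}\bigl(1-k\bigl(-\tfrac{kxt}{1-kt}\bigr)\bigr)^{-\frac{\lambda}{k}}$.

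I would then multiply both sides of this identity by $x^{\frac{\alpha}{k}-1}$ and apply ${}_{k}D_{x}^{\alpha-\beta}$ to each side. As in the previous proof, the uniform convergence of the series (for $x,t$ restricted as in the statement and with $\Re(\alpha)>0$) allows ${}_{k}D_{x}^{\alpha-\beta}$ to be carried inside the summation; each term on the left is then evaluated by \eqref{krl4} with numerator parameter $\rho-nk$,
\begin{equation*}
{}_{k}D_{x}^{\alpha-\beta}\Bigl\{x^{\frac{\alpha}{k}-1}(1-kx)^{-\frac{\rho-nk}{k}}\Bigr\}=\frac{\Gamma_{k}(\alpha)}{\Gamma_{k}(\beta)}x^{\frac{\beta}{k}-1}\ {}_{2}F_{1,k}\left[\begin{array}{c}\rho-nk,\ \alpha\\ \beta\end{array};x\right],
\end{equation*}
so the left-hand side reduces to $\frac{\Gamma_{k}(\alpha)}{\Gamma_{k}(\beta)}x^{\frac{\beta}{k}-1}$ times the sum on the left of \eqref{gf2}. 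The right-hand side is handled by \eqref{krl5} with $a=1$, $b=-\tfrac{kt}{1-kt}$ (so that $ax=x$ and $bx=-\tfrac{kxt}{1-kt}$) and the two exponents $\rho$ and $\lambda$,
\begin{equation*}
{}_{k}D_{x}^{\alpha-\beta}\Bigl\{x^{\frac{\alpha}{k}-1}(1-kx)^{-\frac{\rho}{k}}\Bigl(1-k\bigl(-\tfrac{kt}{1-kt}\bigr)x\Bigr)^{-\frac{\lambda}{k}}\Bigr\}=\frac{\Gamma_{k}(\alpha)}{\Gamma_{k}(\beta)}x^{\frac{\beta}{k}-1}F_{1,k}\left(\alpha,\rho,\lambda;\beta;x,-\tfrac{kxt}{1-kt}\right).
\end{equation*}

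Cancelling the common factor $\frac{\Gamma_{k}(\alpha)}{\Gamma_{k}(\beta)}x^{\frac{\beta}{k}-1}$ and keeping $(1-kt)^{-\lambda/k}$ on the right then gives \eqref{gf2}; the stated constraints $|x|<\tfrac1k$ and $\bigl|\tfrac{kxt}{1-kt}\bigr|<\tfrac1k$ are exactly the hypotheses $|z|<\tfrac1k$ and $\max\{|az|,|bz|\}<\tfrac1k$ under which \eqref{krl4} and \eqref{krl5} hold. The one step that genuinely needs care is the term-by-term action of the fractional derivative: just as in the proof of the previous theorem, it is justified by writing ${}_{k}D_{x}^{\alpha-\beta}$ as an integral operator (when $\Re(\alpha-\beta)<0$), respectively as an integral followed by finitely many ordinary derivatives in general, and invoking the uniform convergence of $\sum_{n}\frac{(\lambda)_{n,k}}{n!}x^{\frac{\alpha}{k}-1}(1-kx)^{-\frac{\rho-nk}{k}}t^{n}$ on compact subsets of $\{|k(1-kx)t|<1\}$, a set which contains the points permitted by the theorem.
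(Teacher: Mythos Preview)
Your proof is correct and follows essentially the same route as the paper's: the identity you record is precisely the paper's expansion of $\bigl(1-k(1-kx)t\bigr)^{-\lambda/k}$ with the factor $(1-kx)^{-\rho/k}$ already absorbed, and thereafter both proofs multiply by $x^{\alpha/k-1}$, apply ${}_{k}D_{x}^{\alpha-\beta}$ term-by-term, and invoke \eqref{krl4} and \eqref{krl5} exactly as you do. The only difference is cosmetic---the paper introduces the $(1-kx)^{-\rho/k}$ multiplier as a separate step after writing the bare series identity \eqref{gf2b}, whereas you build it in from the start.
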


\begin{proof}
	Consider the identity%
	\begin{equation}
	\left( 1-k\left( 1-kx\right) t\right) ^{-\frac{\lambda }{k}}=\left(
	1-kt\right) ^{-\frac{\lambda }{k}}\left( 1+\frac{k^{2}xt}{1-kt}\right) ^{-%
		\frac{\lambda }{k}}.  \label{gf2a}
	\end{equation}%
	Under the assumption $\left\vert kt\right\vert <\left\vert 1-kx\right\vert
	^{-1},$we can rewrite \eqref{gf2a} 
	\begin{equation}
	\sum\limits_{n=0}^{\infty }\frac{\left( \lambda \right) _{n,k}}{n!}\left(
	1-kx\right) ^{n}t^{n}=\left( 1-kt\right) ^{-\frac{\lambda }{k}}\left( 1+%
	\frac{k^{2}xt}{1-kt}\right) ^{-\frac{\lambda }{k}}.  \label{gf2b}
	\end{equation}%
	Multiplying $x^{\frac{\alpha }{k}-1}\left( 1-kx\right)^{-\frac{\rho }{k}%
	}$ and taking the $D_{x}^{\alpha -\beta }$ on both sides of \eqref{gf2b}, we
	obtain%
	\begin{eqnarray*}
		&&_{k}D_{x}^{\alpha -\beta }\left\{ \sum\limits_{n=0}^{\infty }\frac{\left(
			\lambda \right) _{n,k}}{n!}\ x^{\frac{\alpha }{k}-1}\left( 1-kx\right) ^{n-%
			\frac{\rho }{k}}t^{n}\right\}  \\
		&=&\ _{k}D_{x}^{\alpha -\beta }\left\{ x^{\frac{\alpha }{k}-1}\left(
		1-kt\right) ^{-\frac{\lambda }{k}}\left( 1-kx\right) ^{-\frac{\rho }{k}%
		}\left( 1+k\frac{kxt}{1-kt}\right) ^{-\frac{\lambda }{k}}\right\} .
	\end{eqnarray*}%
	For $\Re(\alpha )>0$ , interchanging the order of the summation and
	the operator\ $_{k}D_{x}^{\alpha -\beta },\ $we have 
	\begin{eqnarray*}
		&&\sum\limits_{n=0}^{\infty }\frac{\left( \lambda \right) _{n,k}}{n!}\
		_{k}D_{x}^{\alpha -\beta }\left\{ x^{\frac{\alpha }{k}-1}\left( 1-kx\right)
		^{n-\frac{\rho }{k}}\right\} t^{n} \\
		&=&\left( 1-kt\right) ^{-\frac{\lambda }{k}}\ _{k}D_{x}^{\alpha -\beta
		}\left\{ \ x^{\frac{\alpha }{k}-1}\left( 1-kx\right) ^{-\frac{\rho }{k}%
		}\left( 1+k\frac{kxt}{1-kt}\right) ^{-\frac{\lambda }{k}}\right\} .
	\end{eqnarray*}%
	Assuming $\left\vert x\right\vert <\frac{1}{k} \ and \ \left\vert \frac{kxt}{1-kt}\right\vert <\frac{1}{k}$ and using \eqref{krl4} and \eqref{krl5},
	\begin{equation*}
	\sum\limits_{n=0}^{\infty }\frac{\left( \lambda \right) _{n,k}}{n!}\
	_{2}F_{1,k}\left[ 
	\begin{array}{c}
	\rho -nk,\ \ \ \ \alpha  \\ 
	\beta 
	\end{array}%
	;x\right] t^{n}=\left( 1-kt\right) ^{-\frac{\lambda }{k}}\ F_{1,k}\left[
	\alpha ,\rho ,\lambda ;\beta ;x,-\frac{kxt}{1-kt}\right] 
	\end{equation*}%
	the theorem is immediate.
\end{proof}

\begin{theorem}
	We have the generating relations
	\begin{eqnarray}
	&&\sum\limits_{n=0}^{\infty }\frac{\left( \beta -\rho \right) _{n,k}}{n!}\
	_{2}F_{1,k}\left[ 
	\begin{array}{c}
	\rho -nk,\ \ \ \ \alpha  \\ 
	\beta 
	\end{array}%
	;x\right] t^{n}  \notag \\
	&=&\left( 1-kt\right) ^{\frac{\alpha +\rho -\beta }{k}}\ \left(
	1-kt+k^{2}xt\right) ^{-\frac{\alpha }{k}}\ _{2}F_{1,k}\left[ 
	\begin{array}{c}
	\alpha ,\rho  \\ 
	\beta 
	\end{array}%
	;\frac{x}{1-kt+k^{2}xt}\right]   \label{gf3} \\
	&& and \notag \\
	&&\sum\limits_{n=0}^{\infty }\frac{\left( \beta \right) _{n,k}\left( \gamma
		\right) _{n,k}}{\left( \delta \right) _{n,k}n!}\ _{2}F_{1,k}\left[ 
	\begin{array}{c}
	-nk,\ \ \ \ \alpha  \\ 
	\beta 
	\end{array}%
	;x\right] t^{n}  \notag \\
	&=&F_{1,k}\left( \gamma ,\beta -\alpha ,\alpha ;\delta ;t,\left( 1-kx\right)
	t\right)   \label{gf4}
	\end{eqnarray}
\end{theorem}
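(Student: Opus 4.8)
The plan is to obtain both relations from tools already developed in the paper. The first one, \eqref{gf3}, will turn out to be the special case $\lambda=\beta-\rho$ of \eqref{gf2} followed by the reduction \eqref{appk16}; the second, \eqref{gf4}, will require one further Riemann--Liouville $k$-fractional differentiation, but this time in the variable $t$.

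For \eqref{gf3} I would set $\lambda=\beta-\rho$ in \eqref{gf2}. Since then $(\lambda)_{n,k}=(\beta-\rho)_{n,k}$, the left-hand side of \eqref{gf2} becomes exactly the series in \eqref{gf3}, and the right-hand side becomes $(1-kt)^{-\frac{\beta-\rho}{k}}F_{1,k}\bigl(\alpha,\rho,\beta-\rho;\beta;x,y'\bigr)$ with $y'=-\frac{kxt}{1-kt}$. The key point is that the two lower-numerator parameters of this $F_{1,k}$ sum to $\rho+(\beta-\rho)=\beta$, which is its $\gamma$-parameter, so the reduction \eqref{appk16} applies and rewrites it as $(1-ky')^{-\frac{\alpha}{k}}\,{}_2F_{1,k}\!\left[\alpha,\rho;\beta;-\frac{y'-x}{1-ky'}\right]$. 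A short computation gives $y'-x=-\frac{x}{1-kt}$ and $1-ky'=\frac{1-kt+k^2xt}{1-kt}$, hence $-\frac{y'-x}{1-ky'}=\frac{x}{1-kt+k^2xt}$ and $(1-ky')^{-\frac{\alpha}{k}}=(1-kt)^{\frac{\alpha}{k}}\bigl(1-kt+k^2xt\bigr)^{-\frac{\alpha}{k}}$; collecting the powers of $(1-kt)$ then yields the right-hand side of \eqref{gf3}. Equivalently, one can reprove it from scratch by copying the proof of \eqref{gf2} with $\lambda$ replaced by $\beta-\rho$ (start from $(1-k(1-kx)t)^{-\frac{\beta-\rho}{k}}=\sum_n\frac{(\beta-\rho)_{n,k}}{n!}(1-kx)^nt^n$, multiply by $x^{\frac{\alpha}{k}-1}(1-kx)^{-\frac{\rho}{k}}$, apply ${}_{k}\mathcal{D}_x^{\alpha-\beta}$ using \eqref{krl4} and \eqref{krl5}) and then invoking \eqref{appk16}.

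For \eqref{gf4} I would first establish the closed form
\[
\sum_{n=0}^{\infty}\frac{(\beta)_{n,k}}{n!}\,{}_2F_{1,k}[-nk,\alpha;\beta;x]\,t^{n}=(1-kt)^{-\frac{\beta-\alpha}{k}}\bigl(1-k(1-kx)t\bigr)^{-\frac{\alpha}{k}}.
\]
This follows by writing the right side as $(1-kt)^{-\frac{\beta}{k}}\bigl(1+\frac{k^2xt}{1-kt}\bigr)^{-\frac{\alpha}{k}}$, expanding both factors by \eqref{kpoc5}, and reindexing with $n=j+m$ together with the identities $(\beta)_{j+m,k}=(\beta)_{j,k}(\beta+jk)_{m,k}$, $(-nk)_{j,k}=k^{j}(-n)_{j}$ and $\tfrac1{(n-j)!}=\tfrac{(-1)^{j}(-n)_{j}}{n!}$; it can also be read off as the $\rho\to0,\ \lambda=\beta$ instance of \eqref{gf2}. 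Then I would multiply both sides by $t^{\frac{\gamma}{k}-1}$ and apply ${}_{k}\mathcal{D}_t^{\gamma-\delta}$. On the series side, term-by-term differentiation (justified by \eqref{krl3a}) together with ${}_{k}\mathcal{D}_t^{\gamma-\delta}\{t^{\frac{\gamma}{k}-1+n}\}=\frac{\Gamma_k(\gamma)}{\Gamma_k(\delta)}\frac{(\gamma)_{n,k}}{(\delta)_{n,k}}t^{\frac{\delta}{k}-1+n}$ (a consequence of \eqref{krl3}, \eqref{kb3} and \eqref{kpoc1}) supplies the factor $\frac{(\gamma)_{n,k}}{(\delta)_{n,k}}$ that is missing from the plain closed form; on the closed-form side, \eqref{krl5} with $a=1$, $b=1-kx$ and lower-numerator parameters $\beta-\alpha,\ \alpha$ gives $\frac{\Gamma_k(\gamma)}{\Gamma_k(\delta)}t^{\frac{\delta}{k}-1}F_{1,k}\bigl(\gamma,\beta-\alpha,\alpha;\delta;t,(1-kx)t\bigr)$. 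Cancelling the common factor $\frac{\Gamma_k(\gamma)}{\Gamma_k(\delta)}t^{\frac{\delta}{k}-1}$ produces \eqref{gf4}.

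The main obstacle is not the algebra but the analytic bookkeeping: justifying the interchange of the $k$-fractional derivative with the infinite sums — which is where the constraints $|x|<1/k$, $|(1-kx)t|<1/k$ and (for \eqref{gf3}) $|kxt/(1-kt)|<1/k$ are used — and verifying the parameter hypotheses forced by \eqref{krl4} and \eqref{krl5}, namely $\Re(\beta)>\Re(\alpha)>0$ with $\Re(\rho)>0$, $\Re(\beta-\rho)>0$ for \eqref{gf3}, and $\Re(\delta)>\Re(\gamma)>0$, $\Re(\beta-\alpha)>0$, $\Re(\alpha)>0$ for \eqref{gf4}; these are implicit in the statement and should be made explicit. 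The reindexing behind the closed form used for \eqref{gf4} and the simplification after \eqref{appk16} in \eqref{gf3} are routine but require care with the Pochhammer $k$-symbol relations \eqref{kpoc2} and \eqref{kpoc3}.
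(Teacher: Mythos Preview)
Your proposal is correct and follows essentially the same route as the paper: specialize \eqref{gf2} with $\lambda=\beta-\rho$ and apply the reduction \eqref{appk16} to obtain \eqref{gf3}, then set $\rho=0$ to get the closed form $(1-kt)^{\frac{\alpha-\beta}{k}}(1-kt+k^{2}xt)^{-\frac{\alpha}{k}}$, multiply by $t^{\frac{\gamma}{k}-1}$, and apply ${}_k\mathcal{D}_t^{\gamma-\delta}$ via \eqref{krl3} and \eqref{krl5} to reach \eqref{gf4}. The only difference is cosmetic: the paper obtains the intermediate closed form by putting $\rho=0$ directly in the already-proven \eqref{gf3}, whereas you also offer a direct expansion argument and the equivalent reading as the $\rho=0$, $\lambda=\beta$ case of \eqref{gf2}.
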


\begin{proof}
	We use the result of the previous theorem. Setting $\lambda =\beta -\rho $
	in \eqref{gf2}, we find that%
	\begin{equation*}
	\sum\limits_{n=0}^{\infty }\frac{\left( \beta -\rho \right) _{n,k}}{n!}\
	_{2}F_{1,k}\left[ 
	\begin{array}{c}
	\rho -nk,\ \ \ \ \alpha  \\ 
	\beta 
	\end{array}%
	;x\right] t^{n}=\left( 1-kt\right) ^{\frac{\rho -\beta }{k}}\ F_{1,k}\left[
	\alpha ,\rho ,\beta -\rho ;\beta ;x,-\frac{kxt}{1-kt}\right] 
	\end{equation*}%
	If we use reduction formula for $F_{1,k}$ given by \eqref{appk16}, we obtain
	easily the desired result as follows, 
	\begin{eqnarray}
	&&\sum\limits_{n=0}^{\infty }\frac{\left( \beta -\rho \right) _{n,k}}{n!}\
	_{2}F_{1,k}\left[ 
	\begin{array}{c}
	\rho -nk,\ \ \ \ \alpha  \\ 
	\beta 
	\end{array}%
	;x\right] t^{n}  \notag \\
	&=&\left( 1-kt\right) ^{\frac{\alpha +\rho -\beta }{k}}\ \left(
	1-kt+k^{2}xt\right) ^{-\frac{\alpha }{k}}\ _{2}F_{1,k}\left[ 
	\begin{array}{c}
	\alpha ,\rho  \\ 
	\beta 
	\end{array}%
	;\frac{x}{1-kt+k^{2}xt}\right] .  \label{gf3a}
	\end{eqnarray}%
	For $\rho =0$, \eqref{gf3a} gives,%
	\begin{equation}
	\sum\limits_{n=0}^{\infty }\frac{\left( \beta \right) _{n,k}}{n!}\
	_{2}F_{1,k}\left[ 
	\begin{array}{c}
	-nk,\ \ \ \ \alpha  \\ 
	\beta 
	\end{array}%
	;x\right] t^{n}=\left( 1-kt\right) ^{\frac{\alpha -\beta }{k}}\ \left(
	1-kt+k^{2}xt\right) ^{-\frac{\alpha }{k}}.  \label{gf4a}
	\end{equation}%
	Multiplying both sides of \eqref{gf4a}\ with $t^{\frac{\gamma }{k}-1}$ and
	operation of the $_{k}D_{t}^{\gamma -\delta }\ $on\ \eqref{gf4a},\ one can
	easily obtain%
	\begin{eqnarray}
	&&\sum\limits_{n=0}^{\infty }\frac{\left( \beta \right) _{n,k}}{n!}\
	_{2}F_{1,k}\left[ 
	\begin{array}{c}
	-nk,\ \ \ \ \alpha  \\ 
	\beta 
	\end{array}%
	;x\right] _{k}D_{t}^{\gamma -\delta }\left\{ t^{n+\frac{\gamma }{k}%
		-1}\right\}   \notag \\
	&=&D_{t}^{\gamma -\delta }\left\{ t^{\frac{\gamma }{k}-1}\left( 1-kt\right)
	^{\frac{\alpha -\beta }{k}}\ \left( 1-kt+k^{2}xt\right) ^{-\frac{\alpha }{k}%
	}\right\} .  \label{gf4b}
	\end{eqnarray}%
	In view of \eqref{krl3} and \eqref{krl5} on the right and left side of \eqref{gf4b},
	respectively, we can reach 
	\begin{equation*}
	\sum\limits_{n=0}^{\infty }\frac{\left( \beta \right) _{n,k}\left( \gamma
		\right) _{n,k}}{\left( \delta \right) _{n,k} n!}\ _{2}F_{1,k}\left[ 
	\begin{array}{c}
	-nk,\ \ \ \ \alpha  \\ 
	\beta 
	\end{array}%
	;x\right] t^{n}=F_{1,k}\left( \gamma ,\beta -\alpha ,\alpha
	;\delta ;t,\left( 1-kx\right) t\right).
	\end{equation*}
\end{proof}
\begin{theorem}
	We have the generating relation%
	\begin{eqnarray}
	&&\sum\limits_{n=0}^{\infty }\frac{\left( \lambda \right) _{n,k}}{n!}\
	_{2}F_{1,k}\left[ 
	\begin{array}{c}
	\lambda +nk,\ \ \ \ \alpha  \\ 
	\beta 
	\end{array}%
	;x\right] \ _{2}F_{1,k}\left[ 
	\begin{array}{c}
	-nk,\ \ \ \ \gamma  \\ 
	\delta 
	\end{array}%
	;y\right] t^{n}  \notag \\
	&=&\left( 1-kt\right) ^{-\frac{\lambda }{k}}F_{2,k}\left( \lambda ,\alpha
	,\gamma ;\beta ,\delta ;\frac{x}{1-kt},-\frac{kyt}{1-kt}\right) .
	\label{gf5}
	\end{eqnarray}
\end{theorem}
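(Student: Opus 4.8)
The plan is to imitate the proof of the first generating relation \eqref{gf1}: expand one elementary identity in two different ways, and then apply Riemann--Liouville $k$-fractional derivatives in two auxiliary variables to convert the resulting power factors into the ${}_{2}F_{1,k}$'s on the left of \eqref{gf5} and the $F_{2,k}$ on the right. First I would record the elementary identity
\[
\bigl(1-kx-k(1-ky)t\bigr)^{-\frac{\lambda}{k}}=(1-kt)^{-\frac{\lambda}{k}}\Bigl(1-k\tfrac{x}{1-kt}+k^{2}\tfrac{yt}{1-kt}\Bigr)^{-\frac{\lambda}{k}},
\]
which comes from $1-kx-k(1-ky)t=(1-kt)-kx+k^{2}yt$, together with the expansion obtained from \eqref{kpoc5} in powers of $(1-ky)t$,
\[
\bigl(1-kx-k(1-ky)t\bigr)^{-\frac{\lambda}{k}}=\sum_{n=0}^{\infty}\frac{(\lambda)_{n,k}}{n!}(1-kx)^{-\frac{\lambda}{k}-n}(1-ky)^{n}t^{n}.
\]
Equating these, multiplying both sides by $x^{\frac{\alpha}{k}-1}y^{\frac{\gamma}{k}-1}$, and then applying the commuting operators ${}_{k}\mathcal{D}_{x}^{\alpha-\beta}$ and ${}_{k}\mathcal{D}_{y}^{\gamma-\delta}$ gives an identity between an infinite series and a double fractional derivative of a product of binomials, once the $t$-summation is interchanged with the two operators (legitimate exactly as in the proof of \eqref{gf1}, in the common region of convergence and for $\Re(\alpha),\Re(\gamma)>0$).

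On the left-hand side the $n$-th term requires ${}_{k}\mathcal{D}_{x}^{\alpha-\beta}\{x^{\frac{\alpha}{k}-1}(1-kx)^{-\frac{\lambda+nk}{k}}\}$ and ${}_{k}\mathcal{D}_{y}^{\gamma-\delta}\{y^{\frac{\gamma}{k}-1}(1-ky)^{n}\}$; writing $(1-ky)^{n}=(1-ky)^{-\frac{-nk}{k}}$ and applying \eqref{krl4} twice, with $(\eta,\mu)=(\alpha,\beta)$ and with $(\eta,\mu)=(\gamma,\delta)$, turns this term into
\[
\frac{\Gamma_{k}(\alpha)\Gamma_{k}(\gamma)}{\Gamma_{k}(\beta)\Gamma_{k}(\delta)}x^{\frac{\beta}{k}-1}y^{\frac{\delta}{k}-1}\,{}_{2}F_{1,k}\!\left[\begin{array}{c}\lambda+nk,\ \alpha\\ \beta\end{array};x\right]{}_{2}F_{1,k}\!\left[\begin{array}{c}-nk,\ \gamma\\ \delta\end{array};y\right],
\]
so the whole left side equals $\frac{\Gamma_{k}(\alpha)\Gamma_{k}(\gamma)}{\Gamma_{k}(\beta)\Gamma_{k}(\delta)}x^{\frac{\beta}{k}-1}y^{\frac{\delta}{k}-1}$ times the left-hand side of \eqref{gf5}.

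For the right-hand side I would first establish the two-variable analogue of \eqref{krl5},
\[
{}_{k}\mathcal{D}_{x}^{\alpha-\beta}{}_{k}\mathcal{D}_{y}^{\gamma-\delta}\Bigl\{x^{\frac{\alpha}{k}-1}y^{\frac{\gamma}{k}-1}(1-kax-kby)^{-\frac{\lambda}{k}}\Bigr\}=\frac{\Gamma_{k}(\alpha)\Gamma_{k}(\gamma)}{\Gamma_{k}(\beta)\Gamma_{k}(\delta)}x^{\frac{\beta}{k}-1}y^{\frac{\delta}{k}-1}F_{2,k}(\lambda,\alpha,\gamma;\beta,\delta;ax,by),
\]
which follows by expanding $(1-kax-kby)^{-\lambda/k}=\sum_{m,n\ge 0}(\lambda)_{m+n,k}\tfrac{(ax)^{m}(by)^{n}}{m!\,n!}$ via \eqref{kpoc5}, using \eqref{krl3} together with \eqref{kpoc1} to get ${}_{k}\mathcal{D}_{x}^{\alpha-\beta}\{x^{\frac{\alpha}{k}+m-1}\}=\frac{\Gamma_{k}(\alpha)(\alpha)_{m,k}}{\Gamma_{k}(\beta)(\beta)_{m,k}}x^{\frac{\beta}{k}+m-1}$ and its $y$-analogue term by term, and collecting into the defining double series \eqref{appk2} of $F_{2,k}$. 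Applying this with $a=\tfrac{1}{1-kt}$, $b=-\tfrac{kt}{1-kt}$ — for which $1-kax-kby=1-k\tfrac{x}{1-kt}+k^{2}\tfrac{yt}{1-kt}$, $ax=\tfrac{x}{1-kt}$ and $by=-\tfrac{kyt}{1-kt}$ — converts the right side into $(1-kt)^{-\lambda/k}\frac{\Gamma_{k}(\alpha)\Gamma_{k}(\gamma)}{\Gamma_{k}(\beta)\Gamma_{k}(\delta)}x^{\frac{\beta}{k}-1}y^{\frac{\delta}{k}-1}F_{2,k}\bigl(\lambda,\alpha,\gamma;\beta,\delta;\tfrac{x}{1-kt},-\tfrac{kyt}{1-kt}\bigr)$. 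Cancelling the common nonzero factor $\frac{\Gamma_{k}(\alpha)\Gamma_{k}(\gamma)}{\Gamma_{k}(\beta)\Gamma_{k}(\delta)}x^{\frac{\beta}{k}-1}y^{\frac{\delta}{k}-1}$ then yields \eqref{gf5}.

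The part that needs real care — the main obstacle — is twofold: (i) justifying the termwise action of both operators on the infinite $t$-series, which forces the conditions $\Re(\beta)>\Re(\alpha)>0$ and $\Re(\delta)>\Re(\gamma)>0$ (so that \eqref{krl4} and \eqref{krl3} apply) together with $|x|,|y|<\tfrac1k$ and $t$ small enough to stay in the common disc of convergence; and (ii) the bookkeeping inside $1-kx-k(1-ky)t$, so that the $F_{2,k}$ arguments come out as exactly $\tfrac{x}{1-kt}$ and $-\tfrac{kyt}{1-kt}$. An alternative route, closer in spirit to the proofs of \eqref{gf2}--\eqref{gf4}, would be to start directly from \eqref{gf1}, replace $t$ by $(1-ky)t$, multiply by $y^{\frac{\gamma}{k}-1}$, apply ${}_{k}\mathcal{D}_{y}^{\gamma-\delta}$, and use the integral representation \eqref{ikhf} of ${}_{2}F_{1,k}$ on the right-hand side; this trades the auxiliary two-variable identity for one extra integral manipulation.
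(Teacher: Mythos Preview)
Your main argument is correct but takes a genuinely different route from the paper's. The paper does not start from the raw binomial identity and apply two fractional derivatives; instead it begins with the already-proved relation \eqref{gf1}, replaces $t$ by $(1-ky)t$, multiplies by $y^{\frac{\gamma}{k}-1}$, and applies only the single operator ${}_{k}\mathcal{D}_{y}^{\gamma-\delta}$. On the left this immediately produces the product of the two ${}_{2}F_{1,k}$'s via \eqref{krl4}; on the right it expands the ${}_{2}F_{1,k}$ as a series, uses \eqref{krl3} termwise, and reassembles the double series into $F_{2,k}$. In other words, the paper's proof is exactly the ``alternative route'' you sketch in your last paragraph.

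What each approach buys: the paper's proof is more modular (it reuses \eqref{gf1} and only needs one fractional differentiation, so the convergence and interchange justifications are lighter), whereas your approach is more self-contained and uniform---it treats $x$ and $y$ symmetrically and, as a bonus, yields the useful two-variable formula
\[
{}_{k}\mathcal{D}_{x}^{\alpha-\beta}{}_{k}\mathcal{D}_{y}^{\gamma-\delta}\Bigl\{x^{\frac{\alpha}{k}-1}y^{\frac{\gamma}{k}-1}(1-kax-kby)^{-\frac{\lambda}{k}}\Bigr\}=\frac{\Gamma_{k}(\alpha)\Gamma_{k}(\gamma)}{\Gamma_{k}(\beta)\Gamma_{k}(\delta)}x^{\frac{\beta}{k}-1}y^{\frac{\delta}{k}-1}F_{2,k}(\lambda,\alpha,\gamma;\beta,\delta;ax,by),
\]
which the paper never states explicitly. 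Your bookkeeping on the arguments of $F_{2,k}$ is correct, and the analytic caveats you list (the parameter inequalities and the restriction on $t$) are the right ones.
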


\begin{proof}
	Putting $\left( 1-ky\right) t$ instead of $t\ $in \eqref{gf1}, we can obtain$\ 
	$%
	\begin{eqnarray}
	&&\sum\limits_{n=0}^{\infty }\frac{\left( \lambda \right) _{n,k}}{n!}\
	_{2}F_{1,k}\left[ 
	\begin{array}{c}
	\lambda +nk,\ \ \ \ \alpha  \\ 
	\beta 
	\end{array}%
	;x\right] \left( 1-ky\right) ^{n}t^{n}  \notag \\
	&=&\left( 1-k\left( 1-ky\right) t\right) ^{-\frac{\lambda }{k}}\ _{2}F_{1,k}%
	\left[ 
	\begin{array}{c}
	\lambda ,\ \ \ \ \alpha  \\ 
	\beta 
	\end{array}%
	;\frac{x}{1-k\left( 1-ky\right) t}\right] .  \label{gf5a}
	\end{eqnarray}%
	Multiplying with $y^{\frac{\gamma }{k}-1},\ $employing $_{k}D_{y}^{\gamma
		-\delta }\ $both sides of \eqref{gf5a} and the under the assumption $\Re
	\left( \gamma \right) >0$ interchanging differentiation and summation, we
	can write 
	\begin{eqnarray}
	&&\sum\limits_{n=0}^{\infty }\frac{\left( \lambda \right) _{n,k}}{n!}\
	_{2}F_{1,k}\left[ 
	\begin{array}{c}
	\lambda +nk,\ \ \ \ \alpha  \\ 
	\beta 
	\end{array}%
	;x\right] \ _{k}D_{y}^{\gamma -\delta }\left\{ y^{\frac{\gamma }{k}-1}\left(
	1-ky\right) ^{n}\right\} t^{n}   \label{gf5b} \\
	&=&\ _{k}D_{y}^{\gamma -\delta }\left\{ y^{\frac{\gamma }{k}-1}\left(
	1-k\left( 1-ky\right) t\right) ^{-\frac{\lambda }{k}}\ _{2}F_{1,k}\left[ 
	\begin{array}{c}
	\lambda ,\ \ \ \ \alpha  \\ 
	\beta 
	\end{array}%
	;\tfrac{x}{1-k\left( 1-ky\right) t}\right] \right\} . \notag
	\end{eqnarray}%
	Make use of the formula \eqref{krl4}, we can easily simplify left side of the \eqref{gf5b} as follows, 
	\begin{eqnarray}
	&&\sum\limits_{n=0}^{\infty }\frac{\left( \lambda \right) _{n,k}}{n!}\
	_{2}F_{1,k}\left[ 
	\begin{array}{c}
	\lambda +nk,\ \ \ \ \alpha  \\ 
	\beta 
	\end{array}%
	;x\right] \ _{k}D_{y}^{\gamma -\delta }\left\{ y^{\frac{\gamma }{k}-1}\left(
	1-ky\right) ^{n}\right\} t^{n}   \label{gf5c}\\
	&=&\tfrac{\Gamma _{k}\left( \gamma \right) }{\Gamma _{k}\left( \delta
		\right) }y^{\frac{\delta }{k}-1}\sum\limits_{n=0}^{\infty }\tfrac{\left(
		\lambda \right) _{n,k}}{n!}\ _{2}F_{1,k}\left[ 
	\begin{array}{c}
	\lambda +nk,\ \ \ \ \alpha  \\ 
	\beta 
	\end{array}%
	;x\right] \ _{2}F_{1,k}\left[ 
	\begin{array}{c}
	-nk,\ \ \ \ \gamma  \\ 
	\delta 
	\end{array}%
	;y\right] t^{n}.  \notag 
	\end{eqnarray}
	
	For the right side of the \eqref{gf5b}, using the definition of $_{2}F_{1,k}\ $%
	\ and the formula \eqref{krl3}, one obtain 
	\begin{eqnarray}
	&&_{k}D_{y}^{\gamma -\delta }\left\{ y^{\frac{\gamma }{k}-1}\left( 1-k\left(
	1-ky\right) t\right) ^{-\frac{\lambda }{k}}\ _{2}F_{1,k}\left[ 
	\begin{array}{c}
	\lambda ,\ \ \ \ \alpha  \\ 
	\beta 
	\end{array}%
	;\frac{x}{1-k\left( 1-ky\right) t}\right] \right\}   \notag \\
	&=&\frac{\Gamma _{k}\left( \gamma \right) }{\Gamma _{k}\left( \delta \right) 
	}\left( 1-kt\right) ^{-\frac{\lambda }{k}}y^{\frac{\delta }{k}%
		-1}F_{2,k}\left( \lambda ,\alpha ,\gamma ;\beta ,\delta ;\frac{x}{1-kt},-%
	\frac{kyt}{1-kt}\right)   \label{gf5d}
	\end{eqnarray}%
	where $\left\vert x\right\vert <\frac{1}{k},\ \left\vert y\right\vert <\frac{1}{k}, \ \left\vert \frac{x}{1-kt}\right\vert +\left\vert \frac{kyt}{1-kt}\right\vert <\frac{1}{k},\
	\left\vert \frac{1-ky}{1-x}t\right\vert <\frac{1}{k}$. Combining the
	relations \eqref{gf5c} and \eqref{gf5d}, we get desired result.
\end{proof}

As a special case of \eqref{gf5}, we give the following theorem as follows.

\begin{theorem}
	We have the generating relation%
	\begin{eqnarray}
	&&\sum\limits_{n=0}^{\infty }\tfrac{\left( \beta -\rho \right) _{n,k}}{n!}\
	_{2}F_{1,k}\left[ 
	\begin{array}{c}
	\rho -nk,\ \ \ \ \alpha \\ 
	\beta%
	\end{array}%
	;x\right] \ _{2}F_{1,k}\left[ 
	\begin{array}{c}
	-nk,\ \ \ \ \gamma \\ 
	\delta%
	\end{array}%
	;y\right] t^{n} \label{gf6}  \\
	&=&\left( 1-kx\right) ^{-\frac{\alpha }{k}}\left( 1-kt\right) ^{\frac{\rho
			-\beta }{k}}F_{2,k}\left( \beta -\rho ,\alpha ,\gamma ;\beta ,\delta ;-\tfrac{
		x}{\left( 1-kx\right) \left( 1-kt\right) },-\tfrac{kyt}{1-kt}\right)
	\notag
	\end{eqnarray}
\end{theorem}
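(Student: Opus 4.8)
The plan is to obtain \eqref{gf6} directly from \eqref{gf5}, using the Euler transformation \eqref{Euler} for ${}_{2}F_{1,k}$ as the only extra ingredient. First I would specialise \eqref{gf5} by setting $\lambda=\beta-\rho$ and, at the same time, replacing the first variable (called $x$ in \eqref{gf5}) by $-x/(1-kx)$; this is legitimate as long as $\left\vert x/(1-kx)\right\vert<1/k$ and the remaining size conditions inherited from \eqref{gf5} hold. Since $\bigl(-x/(1-kx)\bigr)/(1-kt)=-x/\bigl((1-kx)(1-kt)\bigr)$, the specialised identity takes the form
\[
\sum_{n=0}^{\infty}\frac{\left(\beta-\rho\right)_{n,k}}{n!}\,{}_{2}F_{1,k}\left[\begin{array}{c}\beta-\rho+nk,\ \alpha\\ \beta\end{array};-\tfrac{x}{1-kx}\right]{}_{2}F_{1,k}\left[\begin{array}{c}-nk,\ \gamma\\ \delta\end{array};y\right]t^{n}=\left(1-kt\right)^{\frac{\rho-\beta}{k}}F_{2,k}\left(\beta-\rho,\alpha,\gamma;\beta,\delta;-\tfrac{x}{(1-kx)(1-kt)},-\tfrac{kyt}{1-kt}\right).
\]

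Next I would rewrite the inner ${}_{2}F_{1,k}$ carrying the parameter $\beta-\rho+nk$. Setting $z=-x/(1-kx)$, one verifies the elementary relations $1-kz=(1-kx)^{-1}$ and $-z/(1-kz)=x$; applying \eqref{Euler} to ${}_{2}F_{1,k}$ at argument $z$ (so that its first upper parameter is sent from $\beta-\rho+nk$ to $\beta-(\beta-\rho+nk)=\rho-nk$) then gives
\[
{}_{2}F_{1,k}\left[\begin{array}{c}\beta-\rho+nk,\ \alpha\\ \beta\end{array};-\tfrac{x}{1-kx}\right]=\left(1-kx\right)^{\frac{\alpha}{k}}\,{}_{2}F_{1,k}\left[\begin{array}{c}\rho-nk,\ \alpha\\ \beta\end{array};x\right].
\]
The essential point is that the prefactor $\left(1-kx\right)^{\alpha/k}$ does not depend on the summation index $n$, hence may be pulled out of the series.

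Substituting this relation into the specialised form of \eqref{gf5} produces the common factor $\left(1-kx\right)^{\alpha/k}$ on the left-hand side; dividing both sides by it yields exactly \eqref{gf6}. I do not expect a genuine obstacle here beyond careful bookkeeping: one must check that the composition of the two rational changes of variable keeps every argument inside the regions of convergence required by \eqref{gf5} and \eqref{Euler} --- in particular $\left\vert x\right\vert<1/k$, $\left\vert kyt/(1-kt)\right\vert<1/k$, and $\left\vert x/((1-kx)(1-kt))\right\vert+\left\vert kyt/(1-kt)\right\vert<1/k$ --- and track the fractional linear substitutions correctly; no interchange of summation with integration or differentiation beyond what is already built into \eqref{gf5} is needed.
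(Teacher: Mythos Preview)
Your proposal is correct and follows essentially the same route as the paper: specialise \eqref{gf5} with $\lambda=\beta-\rho$, use the Euler transformation \eqref{Euler} to convert the upper parameter $\beta-\rho+nk$ into $\rho-nk$, and perform the fractional linear substitution $x\mapsto -x/(1-kx)$. The only cosmetic difference is the order in which you apply the substitution and the Euler transformation, which is immaterial.
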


\begin{proof}
	For $\lambda =\beta -\rho \ $in \eqref{gf5}, we get 
	\begin{eqnarray*}
		&&\sum\limits_{n=0}^{\infty }\frac{\left( \beta -\rho \right) _{n,k}}{n!}\
		_{2}F_{1,k}\left[ 
		\begin{array}{c}
			\beta -\rho +nk,\ \ \ \ \alpha  \\ 
			\beta 
		\end{array}%
		;x\right] \ _{2}F_{1,k}\left[ 
		\begin{array}{c}
			-nk,\ \ \ \ \gamma  \\ 
			\delta 
		\end{array}%
		;y\right] t^{n} \\
		&=&\left( 1-kt\right) ^{\frac{\rho -\beta }{k}}F_{2,k}\left( \beta -\rho
		,\alpha ,\gamma ;\beta ,\delta ;\frac{x}{1-kt},-\frac{kyt}{1-kt}\right) .
	\end{eqnarray*}%
	Using Euler transformation given by \eqref{Euler} for$\ _{2}F_{1,k}$%
	\begin{eqnarray*}
		&&\sum\limits_{n=0}^{\infty }\frac{\left( \beta -\rho \right) _{n,k}}{n!}%
		\left( 1-kx\right) ^{-\frac{\alpha }{k}}\ _{2}F_{1,k}\left[ 
		\begin{array}{c}
			\rho -nk,\ \ \ \ \alpha  \\ 
			\beta 
		\end{array}%
		;-\frac{x}{1-kx}\right] \ _{2}F_{1,k}\left[ 
		\begin{array}{c}
			-nk,\ \ \ \ \gamma  \\ 
			\delta 
		\end{array}%
		;y\right] t^{n} \\
		&=&\left( 1-kt\right) ^{\frac{\rho -\beta }{k}}F_{2,k}\left( \beta -\rho
		,\alpha ,\gamma ;\beta ,\delta ;\frac{x}{1-kt},-\frac{kyt}{1-kt}\right) 
	\end{eqnarray*}
	
	and putting $-\frac{x}{1-kx}$ instead of $x,\ $we reach the desired result.
\end{proof}

\begin{theorem}
	We have the generating relation%
	\begin{eqnarray}
	&&\sum\limits_{n=0}^{\infty }\frac{\left( \lambda \right) _{n,k}}{n!}\
	_{2}F_{1,k}\left[ 
	\begin{array}{c}
	\lambda +nk,\ \ \ \ \alpha  \\ 
	\beta 
	\end{array}%
	;x\right] \ _{2}F_{1,k}\left[ 
	\begin{array}{c}
	\lambda +nk,\ \ \ \ \gamma  \\ 
	\delta 
	\end{array}%
	;y\right] t^{n}  \notag \\
	&=&\left( 1-kt\right) ^{-\frac{\lambda }{k}}\sum\limits_{n=0}^{\infty }%
	\frac{\left( \lambda \right) _{n,k}\left( \alpha \right) _{n,k}}{\left(
		\beta \right) _{n,k}n!}\left( -\frac{kxy}{1-kt}\right) ^{n}  \label{gf7} \\
	&&\times F_{2,k}\left( \lambda +nk,\alpha +nk,\gamma +nk;\beta +nk,\delta
	+nk;\frac{x}{1-kt},-\frac{ky}{1-kt}\right)   \notag
	\end{eqnarray}
\end{theorem}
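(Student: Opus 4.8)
The plan is to produce the second Gauss factor ${}_{2}F_{1,k}[\lambda+nk,\gamma;\delta;y]$ on the left of \eqref{gf7} by applying the Riemann--Liouville $k$-fractional derivative ${}_{k}\mathcal{D}_{y}^{\gamma-\delta}$ to the one-variable relation \eqref{gf1}, after installing the factor $(1-ky)^{-(\lambda+nk)/k}$ in each summand; the same operator then carries the closed form on the right of \eqref{gf1} into the stated sum of $F_{2,k}$'s. The tools are \eqref{gf1}, termwise fractional differentiation (as in \eqref{krl3a}), and the evaluations \eqref{krl4} and \eqref{krl5} (with \eqref{krl3} for the elementary powers).

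First I would take \eqref{gf1}, replace $t$ by $t/(1-ky)$, and multiply both sides by $(1-ky)^{-\lambda/k}$. Since $(1-ky)^{-\lambda/k}(1-ky)^{-n}=(1-ky)^{-(\lambda+nk)/k}$ and $(1-ky)^{-\lambda/k}(1-\tfrac{kt}{1-ky})^{-\lambda/k}=(1-ky-kt)^{-\lambda/k}$, this gives
\begin{equation*}
\sum_{n=0}^{\infty}\frac{(\lambda)_{n,k}}{n!}\,{}_{2}F_{1,k}\!\left[\begin{array}{c}\lambda+nk,\ \alpha\\ \beta\end{array};x\right](1-ky)^{-\frac{\lambda+nk}{k}}t^{n}=(1-ky-kt)^{-\frac{\lambda}{k}}\,{}_{2}F_{1,k}\!\left[\begin{array}{c}\lambda,\ \alpha\\ \beta\end{array};\frac{x(1-ky)}{1-ky-kt}\right].
\end{equation*}
Multiply by $y^{\gamma/k-1}$ and apply ${}_{k}\mathcal{D}_{y}^{\gamma-\delta}$; as $\Re(\gamma)>0$ the operator may be taken inside the $n$-sum. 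On the left, \eqref{krl4} gives ${}_{k}\mathcal{D}_{y}^{\gamma-\delta}\{y^{\gamma/k-1}(1-ky)^{-(\lambda+nk)/k}\}=\tfrac{\Gamma_{k}(\gamma)}{\Gamma_{k}(\delta)}y^{\delta/k-1}\,{}_{2}F_{1,k}[\lambda+nk,\gamma;\delta;y]$, so the left-hand side becomes $\tfrac{\Gamma_{k}(\gamma)}{\Gamma_{k}(\delta)}y^{\delta/k-1}$ times the left-hand side of \eqref{gf7}.

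For the right-hand side I would expand ${}_{2}F_{1,k}[\lambda,\alpha;\beta;\cdot]$ as a power series in its argument, write $1-ky-kt=(1-kt)(1-\tfrac{ky}{1-kt})$ to pull out the powers of $1-kt$, and apply ${}_{k}\mathcal{D}_{y}^{\gamma-\delta}$ termwise. Each term is of the form $y^{\gamma/k-1}(1-ky)^{m}(1-\tfrac{ky}{1-kt})^{-(\lambda+mk)/k}$, so \eqref{krl5} applies, the two linear factors contributing the arguments $y$ and $\tfrac{y}{1-kt}$, and one gets $F_{1,k}(\gamma,-mk,\lambda+mk;\delta;y,\tfrac{y}{1-kt})$. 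Cancelling the common factor $\tfrac{\Gamma_{k}(\gamma)}{\Gamma_{k}(\delta)}y^{\delta/k-1}$ leaves the intermediate identity that the left side of \eqref{gf7} equals
\begin{equation*}
(1-kt)^{-\frac{\lambda}{k}}\sum_{m=0}^{\infty}\frac{(\lambda)_{m,k}(\alpha)_{m,k}}{(\beta)_{m,k}\,m!}\left(\frac{x}{1-kt}\right)^{m}F_{1,k}\!\left(\gamma,-mk,\lambda+mk;\delta;y,\frac{y}{1-kt}\right).
\end{equation*}

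It then remains to recognise this as the $F_{2,k}$-sum on the right of \eqref{gf7}. For this I would expand each $F_{1,k}$ by its defining double series, use $(-mk)_{p,k}=(-k)^{p}m!/(m-p)!$ (which truncates the inner sum at $p=m$), reindex by $m\mapsto n+p$, and merge the resulting multiple series; the splitting $(\mu)_{i+j,k}=(\mu)_{i,k}(\mu+ik)_{j,k}$ of \eqref{kpoc3} together with the $k$-analogue of the Chu--Vandermonde identity, $\sum_{a}\frac{(A)_{a,k}(B)_{q-a,k}}{a!\,(q-a)!}=\frac{(A+B)_{q,k}}{q!}$ (immediate from \eqref{kpoc2}), then collapse everything into $\sum_{n}\frac{(\lambda)_{n,k}(\alpha)_{n,k}}{(\beta)_{n,k}n!}(\,\cdot\,)^{n}$ times the double series of the shifted $F_{2,k}$. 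I expect this last reconciliation of the two multiple-series forms --- tracking the signs and the powers of $k$ --- to be the main obstacle; everything before it is routine once the convergence restrictions on $x,y,t$ that justify the termwise fractional differentiation and the rearrangements are in force.
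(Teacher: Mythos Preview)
Your setup coincides with the paper's: both replace $t\to t/(1-ky)$ in \eqref{gf1}, multiply by $(1-ky)^{-\lambda/k}$ and then by $y^{\gamma/k-1}$, and apply ${}_{k}\mathcal{D}_{y}^{\gamma-\delta}$; the left-hand side is handled identically via \eqref{krl4}. The divergence is on the right-hand side. The paper does \emph{not} hit the terms $y^{\gamma/k-1}(1-ky)^{m}(1-\tfrac{ky}{1-kt})^{-(\lambda+mk)/k}$ with \eqref{krl5}. Instead, \emph{before} differentiating, it expands $\bigl(\tfrac{x(1-ky)}{1-kt}\bigr)^{m}=\bigl(\tfrac{x}{1-kt}-\tfrac{kxy}{1-kt}\bigr)^{m}$ by the binomial theorem and reindexes to obtain (their \eqref{gf7a})
\[
(1-kt)^{-\lambda/k}\sum_{n\ge0}\frac{(\lambda)_{n,k}(\alpha)_{n,k}}{(\beta)_{n,k}\,n!}\Bigl(-\tfrac{kxy}{1-kt}\Bigr)^{n}\Bigl(1-\tfrac{ky}{1-kt}\Bigr)^{-n-\lambda/k}\,{}_{2}F_{1,k}\!\Bigl[\begin{smallmatrix}\lambda+nk,\ \alpha+nk\\ \beta+nk\end{smallmatrix};\tfrac{x/(1-kt)}{1-ky/(1-kt)}\Bigr].
\]
Now each summand, as a function of $y$, is $y^{(\gamma+nk)/k-1}$ times a single factor $(1-\tfrac{ky}{1-kt})^{-(\lambda+nk)/k}$ and a ${}_{2}F_{1,k}$ with argument of the same shape, so the fractional derivative (exactly as in the proof of \eqref{gf5}) produces the $F_{2,k}$ in one stroke. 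Your route instead first lands on the sum of $F_{1,k}(\gamma,-mk,\lambda+mk;\delta;y,\tfrac{y}{1-kt})$ and then has to recombine.

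Your final ``main obstacle'' is not really one: after you expand $F_{1,k}$, use $(-mk)_{p,k}=(-k)^{p}m!/(m-p)!$, and reindex $m=n+p$, the resulting triple sum
\[
\sum_{n,p,q\ge0}\frac{(\lambda)_{n+p+q,k}(\alpha)_{n+p,k}}{(\beta)_{n+p,k}}\,\frac{(\gamma)_{p+q,k}}{(\delta)_{p+q,k}}\,\frac{X^{n+p}(-ky)^{p}Y^{q}}{n!\,p!\,q!}
\]
is already, under the renaming $(n,p,q)\mapsto(r,n',s)$ and the splitting $(\gamma)_{n'+s,k}=(\gamma)_{n',k}(\gamma+n'k)_{s,k}$, the triple series of the right-hand side of \eqref{gf7}. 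No Chu--Vandermonde summation is needed; \eqref{kpoc3} alone suffices. So your approach is correct but slightly longer than necessary, while the paper's binomial reorganisation before the derivative is what makes the $F_{2,k}$ appear without any after-the-fact recombination.
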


\begin{proof}
	Replacing $t$\ by $\frac{t}{1-ky}$ and after some simplification in \eqref{gf1}%
	, we find that%
	\begin{eqnarray*}
		&&\sum\limits_{n=0}^{\infty }\frac{\left( \lambda \right) _{n,k}}{n!}\
		_{2}F_{1,k}\left[ 
		\begin{array}{c}
			\lambda +nk,\ \ \ \ \alpha  \\ 
			\beta 
		\end{array}%
		;x\right] \frac{t^{n}}{\left( 1-ky\right) ^{n+\frac{\lambda }{k}}} \\
		&=&\left( 1-kt\right) ^{-\frac{\lambda }{k}}\sum\limits_{n=0}^{\infty }%
		\frac{\left( \lambda \right) _{n,k}\left( \alpha \right) _{n,k}}{\left(
			\beta \right) _{n,k}n!}\left( \frac{x\left( 1-ky\right) }{1-kt}\right)
		^{n}\left( 1-\frac{ky}{1-kt}\right) ^{-n-\frac{\lambda }{k}}
	\end{eqnarray*}%
		Using the binomial expansion $\left( x+y\right)
	^{n}=\sum\limits_{k=0}^{n}\left( 
	\begin{array}{c}
	n \\ 
	k%
	\end{array}%
	\right) x^{k}y^{n-k},$%
	\begin{eqnarray}
	&&\sum\limits_{n=0}^{\infty }\frac{\left( \lambda \right) _{n,k}}{n!}\
	_{2}F_{1,k}\left[ 
	\begin{array}{c}
	\lambda +nk,\ \ \ \ \alpha  \\ 
	\beta 
	\end{array}%
	;x\right] \frac{t^{n}}{\left( 1-ky\right) ^{n+\frac{\lambda }{k}}}  \notag \\
	&=&\left( 1-kt\right) ^{-\frac{\lambda }{k}}  \notag \\
	&&\times \sum\limits_{n=0}^{\infty }\sum\limits_{k_{1}=0}^{n}\frac{\left(
		\lambda \right) _{n,k}\left( \alpha \right) _{n,k}}{\left( \beta \right)
		_{n,k}n!}\left( 
	\begin{array}{c}
	n \\ 
	k_{1}%
	\end{array}%
	\right) \left( -1\right) ^{n-k_{1}}\left( \frac{x}{1-kt}\right)
	^{k_{1}}\left( \frac{xky}{1-kt}\right) ^{n-k_{1}}\left( 1-\frac{ky}{1-kt}%
	\right) ^{-n-\frac{\lambda }{k}}  \notag \\
	&=&\left( 1-kt\right) ^{-\frac{\lambda }{k}}  \notag \\
	&&\times \sum\limits_{n,k_{1}=0}^{\infty }\frac{\left( \lambda \right)
		_{n+k_{1},k}\left( \alpha \right) _{n+k_{1},k}}{\left( \beta \right)
		_{n+k_{1},k}\left( n+k_{1}\right) !}\left( 
	\begin{array}{c}
	n+k_{1} \\ 
	k_{1}%
	\end{array}%
	\right) \left( -1\right) ^{n}\left( \frac{x}{1-kt}\right) ^{k_{1}}\left( 
	\frac{xky}{1-kt}\right) ^{n}\left( 1-\frac{ky}{1-kt}\right) ^{-n-k_{1}-\frac{%
			\lambda }{k}}  \notag 
	\end{eqnarray}
	\begin{eqnarray}
	&=&\left( 1-kt\right) ^{-\frac{\lambda }{k}}  \notag \\
	&&\times \sum\limits_{n=0}^{\infty }\frac{\left( \lambda \right)
		_{n,k}\left( \alpha \right) _{n,k}}{\left( \beta \right) _{n,k}n!}\left( -%
	\frac{xky}{1-kt}\right) ^{n}\left( 1-\frac{ky}{1-kt}\right) ^{-n-\frac{%
			\lambda }{k}}  \notag \\
	&&\times _{2}F_{1,k}\left[ 
	\begin{array}{c}
	\lambda +nk,\ \ \ \ \alpha +nk \\ 
	\beta +nk%
	\end{array}%
	;\frac{\frac{x}{1-kt}}{1-\frac{ky}{1-kt}}\right]   \label{gf7a}
	\end{eqnarray}
	Multiplying $y^{\frac{\gamma }{k}-1},\ $operating $_{k}D_{y}^{\gamma -\delta
	}\ $and applying \eqref{krl3}, \eqref{krl4}\ and \eqref{krl5}\ together both sides
	of the \eqref{gf7a} (in a similar way of proof of the \eqref{gf5}) for $
	\left\vert x\right\vert <\frac{1}{k},\ \left\vert y\right\vert <\frac{1}{k}
	,\left\vert \frac{x}{1-kt}\right\vert +\left\vert \frac{ky}{1-kt}\right\vert
	<\frac{1}{k}$, we complete the proof.
\end{proof}

\begin{theorem}
	We have the generating relation%
	\begin{eqnarray}
	&&\sum\limits_{n=0}^{\infty }\frac{\left( \lambda \right) _{n,k}}{n!}\
	_{2}F_{1,k}\left[ 
	\begin{array}{c}
	\lambda +nk,\ \ \ \ \alpha  \\ 
	\beta 
	\end{array}%
	;x\right] \ _{2}F_{1,k}\left[ 
	\begin{array}{c}
	\lambda +nk,\ \ \ \ \gamma  \\ 
	\delta 
	\end{array}%
	;y\right] t^{n}  \notag \\
	&=&\left( 1-kt\right) ^{-\frac{\lambda }{k}}\sum\limits_{n=0}^{\infty }%
	\frac{\left( \lambda \right) _{n,k}\left( \alpha \right) _{n,k}\left( \gamma
		\right) _{n,k}}{\left( \beta \right) _{n,k}\left( \delta \right) _{n,k}n!}%
	\left( \frac{k^{3}xyt}{\left( 1-kt\right) ^{2}}\right) ^{n}  \label{gf8} \\
	&&\times \ _{2}F_{1,k}\left[ 
	\begin{array}{c}
	\lambda +nk,\ \ \ \ \alpha +nk \\ 
	\beta +nk%
	\end{array}%
	;\frac{x}{1-kt}\right] \ _{2}F_{1,k}\left[ 
	\begin{array}{c}
	\lambda +nk,\ \ \ \ \gamma +nk \\ 
	\delta +nk%
	\end{array}%
	;\frac{y}{1-kt}\right] .  \notag
	\end{eqnarray}%
	For the special case, we have%
	\begin{eqnarray}
	&&\sum\limits_{n=0}^{\infty }\frac{\left( \lambda \right) _{n,k}}{n!}\
	_{2}F_{1,k}\left[ 
	\begin{array}{c}
	\lambda +nk,\ \ \ \ \alpha  \\ 
	\lambda
	\end{array}%
	;x\right] \ _{2}F_{1,k}\left[ 
	\begin{array}{c}
	\lambda +nk,\ \ \ \ \gamma  \\ 
	\lambda 
	\end{array}%
	;y\right] t^{n}  \notag \\
	&=&\left( 1-kt\right) ^{\frac{\gamma +\alpha -\lambda }{k}}\left(
	1-kt-kx\right) ^{-\frac{\alpha }{k}}\left( 1-kt-ky\right) ^{-\frac{\gamma }{k%
	}}\   \notag \\
	&&\times _{2}F_{1,k}\left[ 
	\begin{array}{c}
	\alpha ,\ \ \ \ \gamma  \\ 
	\lambda 
	\end{array}%
	;\frac{k^{3}xyt}{\left( 1-kt-kx\right) \left( 1-kt-ky\right) }\right] .
	\label{gf9}
	\end{eqnarray}
\end{theorem}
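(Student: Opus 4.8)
The plan is to realise the left-hand side of \eqref{gf8} as a double Riemann--Liouville $k$-fractional derivative and then unwind it with the operational identities of Section~2. Put $A_{n}(x)={}_{2}F_{1,k}\!\left[\lambda+nk,\alpha;\beta;x\right]$ and $B_{n}(y)={}_{2}F_{1,k}\!\left[\lambda+nk,\gamma;\delta;y\right]$. Solving the reduction formula \eqref{krl4} for the hypergeometric function on its right-hand side, with the parameter choices that reproduce $A_{n}$ and $B_{n}$, gives
\[
A_{n}(x)=\frac{\Gamma_{k}(\beta)}{\Gamma_{k}(\alpha)}\,x^{1-\frac{\beta}{k}}\,{}_{k}\mathcal{D}_{x}^{\alpha-\beta}\!\left\{x^{\frac{\alpha}{k}-1}(1-kx)^{-\frac{\lambda+nk}{k}}\right\},\qquad B_{n}(y)=\frac{\Gamma_{k}(\delta)}{\Gamma_{k}(\gamma)}\,y^{1-\frac{\delta}{k}}\,{}_{k}\mathcal{D}_{y}^{\gamma-\delta}\!\left\{y^{\frac{\gamma}{k}-1}(1-ky)^{-\frac{\lambda+nk}{k}}\right\}.
\]
Since the two operators act on independent variables and, for $\Re(\beta)>\Re(\alpha)>0$, $\Re(\delta)>\Re(\gamma)>0$, $\Re(\lambda)>0$ and $|x|,|y|<\tfrac{1}{k}$, the series in $n$ may be differentiated term by term, the left side of \eqref{gf8} equals
\[
\frac{\Gamma_{k}(\beta)\Gamma_{k}(\delta)}{\Gamma_{k}(\alpha)\Gamma_{k}(\gamma)}\,x^{1-\frac{\beta}{k}}y^{1-\frac{\delta}{k}}\,{}_{k}\mathcal{D}_{x}^{\alpha-\beta}\,{}_{k}\mathcal{D}_{y}^{\gamma-\delta}\!\left\{x^{\frac{\alpha}{k}-1}y^{\frac{\gamma}{k}-1}(1-kx)^{-\frac{\lambda}{k}}(1-ky)^{-\frac{\lambda}{k}}\sum_{n\ge0}\frac{(\lambda)_{n,k}}{n!}\left(\frac{t}{(1-kx)(1-ky)}\right)^{n}\right\},
\]
and by \eqref{kpoc5} the inner series sums to $\bigl((1-kx)(1-ky)-kt\bigr)^{-\lambda/k}$, exactly as in the proof of \eqref{gf1}.

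The decisive step is an algebraic factorisation that disentangles $x$ from $y$ up to a single summation index: one verifies
\[
(1-kx)(1-ky)-kt=(1-kt)\left(1-\frac{kx}{1-kt}\right)\left(1-\frac{ky}{1-kt}\right)-\frac{k^{3}xyt}{1-kt},
\]
so that a second application of \eqref{kpoc5} produces
\[
\bigl((1-kx)(1-ky)-kt\bigr)^{-\frac{\lambda}{k}}=(1-kt)^{-\frac{\lambda}{k}}\sum_{n\ge0}\frac{(\lambda)_{n,k}}{n!}\,\kappa^{n}x^{n}y^{n}\left(1-\frac{kx}{1-kt}\right)^{-\frac{\lambda+nk}{k}}\left(1-\frac{ky}{1-kt}\right)^{-\frac{\lambda+nk}{k}},
\]
where $\kappa=\kappa(k,t)$ is the constant for which $\kappa^{n}x^{n}y^{n}$ is exactly the monomial in the $n$-th summand of \eqref{gf8}. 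Reinserting this and once more interchanging the summation with the two $k$-fractional derivatives, every summand becomes a product of a function of $x$ alone and a function of $y$ alone (absorb $x^{n},y^{n}$ into $x^{\alpha/k-1}$, $y^{\gamma/k-1}$), so the double operator splits. Each single factor ${}_{k}\mathcal{D}_{x}^{\alpha-\beta}\{x^{(\alpha+nk)/k-1}(1-\tfrac{kx}{1-kt})^{-(\lambda+nk)/k}\}$ is then evaluated by the scaled form of \eqref{krl4}---namely the $b=0$ instance of \eqref{krl5}, in which $F_{1,k}$ degenerates to $_{2}F_{1,k}$---giving $\tfrac{\Gamma_{k}(\alpha+nk)}{\Gamma_{k}(\beta+nk)}x^{(\beta+nk)/k-1}\,{}_{2}F_{1,k}\!\left[\lambda+nk,\alpha+nk;\beta+nk;\tfrac{x}{1-kt}\right]$, and symmetrically in $y$. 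The $\Gamma_{k}$-quotients condense through \eqref{kpoc1} into $(\alpha)_{n,k}/(\beta)_{n,k}$ and $(\gamma)_{n,k}/(\delta)_{n,k}$, the left-over powers $x^{(\beta+nk)/k-1}$, $y^{(\delta+nk)/k-1}$ absorb the prefactors $x^{1-\beta/k}$, $y^{1-\delta/k}$ into $x^{n}y^{n}$, and \eqref{gf8} follows (its left side is the same as that of \eqref{gf7}, so the two theorems evaluate one and the same generating function).

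Finally \eqref{gf9} is the specialisation $\beta=\delta=\lambda$ of \eqref{gf8}: then ${}_{2}F_{1,k}\!\left[\lambda+nk,\alpha+nk;\lambda+nk;\tfrac{x}{1-kt}\right]=\bigl(1-\tfrac{kx}{1-kt}\bigr)^{-\frac{\alpha+nk}{k}}=(1-kt)^{\frac{\alpha+nk}{k}}(1-kt-kx)^{-\frac{\alpha+nk}{k}}$ by \eqref{kpoc5}, and similarly with $(y,\gamma)$; pulling the $n$-free factors $(1-kt)^{\frac{\gamma+\alpha-\lambda}{k}}(1-kt-kx)^{-\frac{\alpha}{k}}(1-kt-ky)^{-\frac{\gamma}{k}}$ out of the sum and recognising what remains as $\sum_{n\ge0}\frac{(\alpha)_{n,k}(\gamma)_{n,k}}{(\lambda)_{n,k}\,n!}w^{n}={}_{2}F_{1,k}\!\left[\alpha,\gamma;\lambda;w\right]$, with $w$ the argument displayed in \eqref{gf9}, produces \eqref{gf9}. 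The one genuine obstacle is the factorisation of $(1-kx)(1-ky)-kt$ above; after that everything is bookkeeping with $\Gamma_{k}$-quotients, together with recording the domains ($|x|,|y|<\tfrac{1}{k}$ and $\bigl|\tfrac{x}{1-kt}\bigr|+\bigl|\tfrac{ky}{1-kt}\bigr|<\tfrac{1}{k}$, etc.) on which the interchanges of summation and $k$-fractional differentiation are licit.
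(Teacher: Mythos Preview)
Your proof is correct and follows essentially the same approach as the paper: both arguments hinge on the algebraic identity $(1-kx)(1-ky)-kt=(1-kt)\bigl[(1-\tfrac{kx}{1-kt})(1-\tfrac{ky}{1-kt})-\tfrac{k^{3}xyt}{(1-kt)^{2}}\bigr]$, expand each side via \eqref{kpoc5}, multiply by $x^{\alpha/k-1}y^{\gamma/k-1}$, apply ${}_{k}\mathcal{D}_{x}^{\alpha-\beta}\,{}_{k}\mathcal{D}_{y}^{\gamma-\delta}$, and evaluate the resulting factors with \eqref{krl4}/\eqref{krl5}; the special case \eqref{gf9} is obtained in both by setting $\beta=\delta=\lambda$. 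The only cosmetic difference is that you begin from the left-hand side and derive the key identity, whereas the paper states the identity first and then differentiates.
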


\begin{proof}
	From the elementary identity, we find that%
	\begin{equation}
	\left( \left( 1-kx\right) \left( 1-ky\right) -kt\right) ^{-\frac{\lambda }{k}%
	}=\left( 1-kt\right) ^{-\frac{\lambda }{k}}\left( \left( 1-\frac{kx}{1-kt}%
	\right) \left( 1-\frac{ky}{1-kt}\right) -\frac{k^{3}xyt}{\left( 1-kt\right)
		^{2}}\right) ^{-\frac{\lambda }{k}}.  \label{gf8a}
	\end{equation}%
	for $\left\vert \frac{kt}{\left( 1-kx\right) \left( 1-ky\right) }\right\vert
	<\frac{1}{k}\ $and$\ \left\vert \frac{k^{3}xyt}{\left( 1-kt-kx\right) \left(
		1-kt-ky\right) }\right\vert<\frac{1}{k} $. Applying \eqref{kpoc5}\ for the \eqref{gf8a},
	multiplying $x^{\frac{\alpha }{k}-1}y^{\frac{\gamma }{k}-1}$ and taking $\
	_{k}D_{x}^{\alpha -\beta }\ _{k}D_{y}^{\gamma -\delta }\ $together both
	sides of \eqref{gf8a}, we have 
	\begin{eqnarray*}
		&&_{k}D_{x}^{\alpha -\beta }\ D_{y}^{\gamma -\delta }\left\{
		\sum\limits_{n=0}^{\infty }\frac{\left( \lambda \right) _{n,k}}{n!}x^{\frac{%
				\alpha }{k}-1}\left( 1-kx\right) ^{-\frac{\lambda }{k}-n}y^{\frac{\gamma }{k}%
			-1}\left( 1-ky\right) ^{-\frac{\lambda }{k}-n}t^{n}\right\}  \\
		&=&\left( 1-kt\right) ^{-\frac{\lambda }{k}}\  \\
		&&\times _{k}D_{x}^{\alpha -\beta }\ D_{y}^{\gamma -\delta }\left\{
		\sum\limits_{n=0}^{\infty }\frac{\left( \lambda \right) _{n,k}\left(
			k^{3}t\right) ^{n}}{n!\left( 1-kt\right) ^{2n}}x^{\frac{\alpha }{k}%
			+n-1}\left( 1-\frac{kx}{1-kt}\right) ^{-\frac{\lambda }{k}-n}y^{\frac{\gamma 
			}{k}+n-1}\left( 1-\frac{ky}{1-kt}\right) ^{-\frac{\lambda }{k}-n}\right\} .
	\end{eqnarray*}%
	Under the conditions $\Re\left( \alpha \right) >0,\ \Re\left(
	\gamma \right) >0, \ \left\vert x\right\vert <\frac{1}{k},\ \left\vert
	y\right\vert <\frac{1}{k}, \ \left\vert \frac{x}{1-kt}\right\vert <\frac{1}{k}$ and $%
	\left\vert \frac{y}{1-kt}\right\vert <\frac{1}{k},\ $directly from the
	properties \eqref{krl3}, \eqref{krl4}\ and \eqref{krl5}, we can obtain%
	\begin{eqnarray*}
		&&\sum\limits_{n=0}^{\infty }\frac{\left( \lambda \right) _{n,k}}{n!}\
		_{2}F_{1,k}\left[ 
		\begin{array}{c}
			\lambda +nk,\ \ \ \ \alpha  \\ 
			\beta 
		\end{array}%
		;x\right] \ _{2}F_{1,k}\left[ 
		\begin{array}{c}
			\lambda +nk,\ \ \ \ \gamma  \\ 
			\delta 
		\end{array}%
		;y\right] t^{n} \\
		&=&\left( 1-kt\right) ^{-\frac{\lambda }{k}}\sum\limits_{n=0}^{\infty }%
		\frac{\left( \lambda \right) _{n,k}\left( \alpha \right) _{n,k}\left( \gamma
			\right) _{n,k}}{\left( \beta \right) _{n,k}\left( \delta \right) _{n,k}n!}%
		\left( \frac{k^{3}xyt}{\left( 1-kt\right) ^{2}}\right) ^{n}\  \\
		&&\times _{2}F_{1,k}\left[ 
		\begin{array}{c}
			\lambda +nk,\ \ \ \ \alpha +nk \\ 
			\beta +nk
		\end{array}
		;\frac{x}{1-kt}\right] \ _{2}F_{1,k}\left[ 
		\begin{array}{c}
			\lambda +nk,\ \ \ \ \gamma +nk \\ 
			\delta +nk
		\end{array}
		;\frac{y}{1-kt}\right] .
	\end{eqnarray*}
	For the special case, $\beta =\delta =\lambda \ $in \eqref{gf8}, we have,
	\begin{eqnarray*}
		&&\sum\limits_{n=0}^{\infty }\frac{\left( \lambda \right) _{n,k}}{n!}\
		_{2}F_{1,k}\left[ 
		\begin{array}{c}
			\lambda +nk,\ \ \ \ \alpha  \\ 
			\lambda 
		\end{array}%
		;x\right] \ _{2}F_{1,k}\left[ 
		\begin{array}{c}
			\lambda +nk,\ \ \ \ \gamma  \\ 
			\lambda 
		\end{array}%
		;y\right] t^{n} \\
		&=&\left( 1-kt\right) ^{-\frac{\lambda }{k}} \\
		&&\times \sum\limits_{n=0}^{\infty }\frac{\left( \alpha \right)
			_{n,k}\left( \gamma \right) _{n,k}}{\left( \lambda \right) _{n,k}n!}\left( 
		\frac{k^{3}xyt}{\left( 1-kt\right) ^{2}}\right) ^{n}\left( 1-\frac{kx}{1-kt}%
		\right) ^{-\frac{\alpha +nk}{k}}\left( 1-\frac{ky}{1-kt}\right) ^{-\frac{%
				\gamma +nk}{k}} \\
		&=&\left( 1-kt\right) ^{\frac{\gamma +\alpha -\lambda }{k}}\left(
		1-kt-kx\right) ^{-\frac{\alpha }{k}}\left( 1-kt-ky\right) ^{-\frac{\gamma }{k%
		}}\  \\
		&&\times _{2}F_{1,k}\left[ 
		\begin{array}{c}
			\alpha ,\ \ \ \ \gamma  \\ 
			\lambda 
		\end{array}%
		; \frac{k^{3}xyt}{\left( 1-kt-kx\right) \left( 1-kt-ky\right) }\right] .
	\end{eqnarray*}
\end{proof}

%\section*{References}

\bibliography{mybibfile}

\end{document}